\numberwithin{equation}{section}
\newcommand{\R}{\mathbb{R}}
\newcommand{\N}{\mathbb{N}}
\newcommand{\Z}{\mathbb{Z}}
\newcommand{\s}{\sharp}
\DeclareMathOperator{\dive}{div}
\newtheorem{lem}{Lemma}
\newtheorem{thm}{Theorem}
\newtheorem{defn}{Definition} 
\theoremstyle{remark}
\newtheorem{remark}{Remark}
\begin{document}

\title{periodic solutions for the non-local operator $(-\Delta+ m^{2})^{s}-m^{2s}$ with $m\geq 0$}

\author{Vincenzo Ambrosio}

\address
{\textsc {Vincenzo Ambrosio}\\
Dipartimento di Matematica e Applicazioni \\ 
Universit\`a degli Studi "Federico II" di Napoli \\
via Cinthia, 80126 Napoli, Italy }

\email{vincenzo.ambrosio2@unina.it}

\date{July 15, 2015}         

\keywords{Nonlocal operators, Linking Theorem, Periodic solutions, Extension Method}

\begin{abstract}
By using variational methods we investigate the existence of $T$-periodic solutions to 
\begin{equation*}
\left\{
\begin{array}{ll}
[(-\Delta_{x}+m^{2})^{s}-m^{2s}]u=f(x,u) &\mbox{ in } (0,T)^{N}   \\
u(x+Te_{i})=u(x)    &\mbox{ for all } x \in \R^{N}, \quad i=1, \dots, N
\end{array}
\right.
\end{equation*}
where $s\in (0,1)$, $N>2s$, $T>0$, $m\geq 0$ and $f(x,u)$ is a continuous function, $T$-periodic in $x$, verifying the Ambrosetti-Rabinowitz condition  and a polynomial growth at rate $p\in (1, \frac{N+2s}{N-2s})$. 
\end{abstract}

\maketitle

\section{Introduction}

\noindent
Recently, considerable attention has been given to fractional Sobolev Spaces and corresponding nonlocal equations, in particular to the ones driven by the fractional powers of the Laplacian.
In fact, this operator naturally arises in several 
areas of research and find applications in optimization, finance, the thin obstacle problem, phase transitions, anomalous diffusion, crystal dislocation, flame propagation, conservation laws, ultra-relativistic limits of quantum mechanics, quasi-geostrophic flows and water waves. For more details and applications see \cite{Applebaum}, \cite{BKW}, \cite{Cabsolmor}, \cite{CV}, \cite{CT}, \cite{DL}, \cite{LY2}, \cite{Silvestre}, \cite{SirVal}, \cite{Stoker}, \cite{Toland} and references therein.

\noindent
The purpose of the present paper is to study the $T$-periodic solutions to the problem
\begin{equation}\label{P}
\left\{
\begin{array}{ll}
[(-\Delta_{x}+m^{2})^{s}-m^{2s}]u=f(x,u) &\mbox{ in } (0,T)^{N}   \\
u(x+Te_{i})=u(x)    &\mbox{ for all } x \in \R^{N}, \quad i=1, \dots, N
\end{array},
\right.
\end{equation}
where $s\in (0,1)$, $N >2s$, $(e_{i})$ is the canonical basis in $\R^{N}$ and $f:\R^{N+1}\rightarrow \R$ is a function satisfying the following hypotheses:
\begin{compactenum}[($f1$)]
\item $f(x,t)$ is $T$-periodic in $x \in \R^{N}$; that is $f(x+Te_{i},t)=f(x,t)$;
\item $f$ is continuous in $\R^{N+1}$; 
\item $f(x,t)=o(t)$ as $t \rightarrow 0$ uniformly in $x\in \R^{N}$;
\item there exist $1<p<2^{\s}_{s}-1=\frac{2N}{N-2s}-1$ and $C>0$ such that
$$
|f(x,t)|\leq C(1+|t|^{p})
$$
for any $x\in \R^{N}$ and $t\in \R$;
\item there exist $\mu>2$ and $r_{0}>0$ such that
$$
0<\mu F(x,t)\leq t f(x,t)
$$
for $x\in \R^{N}$ and $|t|\geq r_{0}$. Here $\displaystyle{F(x,t)=\int_{0}^{t} f(x,\tau) d\tau}$;
\item $tf(x,t) \geq 0$ for all $x\in \R^{N}$ and $t\in \R$.
\end{compactenum}
We notice that $(f2)$ and $(f5)$ imply the existence of two constants $a,b>0$ such that 
\begin{equation*}
F(x,t) \geq a|t|^{\mu}-b \mbox{ for all } x\in \R^{N}, t\in \R.
\end{equation*}
Then, since $\mu>2$, $F(x,t)$ grows at a superquadratic rate and by $(f5)$, $f(x,t)$ grows at a superlinear rate as $|t| \rightarrow \infty$. \\
Here, the operator $(-\Delta_{x}+m^{2})^{s}$ is defined through the spectral decomposition, by using the powers of the eigenvalues of $-\Delta+m^{2}$ with periodic boundary conditions.\\
Let $u\in\mathcal{C}^{\infty}_{T}(\R^{N})$, that is $u$ is infinitely differentiable in $\R^{N}$ and $T$-periodic in each variable.
Then $u$ has a Fourier series expansion:
$$
u(x)=\sum_{k\in \Z^{N}} c_{k} \frac{e^{\imath \omega k\cdot x}}{{\sqrt{T^{N}}}} \quad (x\in \R^{N})
$$
where 
$$
 \omega=\frac{2\pi}{T}\mbox{ and } \; c_{k}=\frac{1}{\sqrt{T^{N}}} \int_{(0,T)^{N}} u(x)e^{- \imath \omega k \cdot x}dx \quad (k\in \Z^{N})
$$
are the Fourier coefficients of $u$.
The operator $(-\Delta_{x}+m^{2})^{s}$ is defined by setting 
\begin{equation*}\label{nfrls}
(-\Delta_{x}+m^{2})^{s} \,u=\sum_{k\in \Z^{N}} c_{k} (\omega^{2}|k|^{2}+m^{2})^{s} \, \frac{e^{\imath \omega k\cdot x}}{{\sqrt{T^{N}}}}.
\end{equation*}
\noindent
For  $\displaystyle{u=\sum_{k\in \Z^{N}} c_{k} \frac{e^{\imath \omega k\cdot x}}{{\sqrt{T^{N}}}}}$ and $\displaystyle{v=\sum_{k\in \Z^{N}} d_{k} \frac{e^{\imath \omega k\cdot x}}{{\sqrt{T^{N}}}}}$, we have that 
$$
\mathcal{Q}(u,v)=\sum_{k\in \Z^{N}} (\omega^{2}|k|^{2}+m^{2})^{s} c_{k} \bar{d}_{k}
$$
can be extended by density to a quadratic form on the Hilbert space 
$$
\mathbb{H}^{s}_{m,T}=\Bigl\{u=\sum_{k\in \Z^{N}} c_{k} \frac{e^{\imath \omega k\cdot x}}{{\sqrt{T^{N}}}}\in L^{2}(0,T)^{N}: \sum_{k\in \Z^{N}} (\omega^{2}|k|^{2}+m^{2})^{s} \, |c_{k}|^{2}<\infty \Bigr\}
$$
endowed with the norm
$$
|u|_{\mathbb{H}^{s}_{m,T}}^{2}=\sum_{k\in \Z^{N}} (\omega^{2}|k|^{2}+m^{2})^{s} |c_{k}|^{2}.
$$
When $m=1$ we set $\mathbb{H}^{s}_{T}=\mathbb{H}^{s}_{1,T}$. \\
In $\R^{N}$, the physical interest of the non-local operator $(-\Delta+m^{2})^{s}$ is manifest in the case $s=1/2$: it is the Hamiltonian for a (free) relativistic particle of mass $m$; see for instance \cite{A1}, \cite{FL}, \cite{LL}, \cite{LY1}, \cite{LY2}. 
In particular, such operator is deeply connected with the Stochastic Process Theory: in fact it is an infinitesimal generator of a Levy process called $\alpha$-stable process; see \cite{Applebaum}, \cite{CMS} and \cite{Ryz}.\\
Problems similar to (\ref{P}) have been also studied in the local setting. 
The typical example is given by
\begin{equation}\label{P''}
\left\{
\begin{array}{ll}
Lu=f(x,u) &\mbox{ in } \Omega   \\
u=0   &\mbox{ on } \partial \Omega
\end{array},
\right.
\end{equation} 
where $L$ is uniformly elliptic, $\Omega$ is a smooth bounded domain in $\R^{N}$ and $f(x,t)$ is a continuous function satisfying the assumptions $(f3)$-$(f5)$. It is well known that (\ref{P''}) possesses a weak solution which can be obtained as critical point of a corresponding functional by means of minimax methods; see for instance \cite{AR}, \cite{MawWill}, \cite{Rab}, \cite{Struwe} and \cite{Willem}.
\smallskip

\noindent
The aim of the following paper is to study (\ref{P''}) in the periodic setting, when we replace
$L$ by $(-\Delta+m^{2})^{s}-m^{2s}$, $m\geq 0$ and $s\in (0,1)$.
We remark that the problem $(\ref{P})$ with $s=\frac{1}{2}$ has been investigated by the same author in \cite{A2}. In this paper, we extend the results in \cite{A2} to the more general operator $(-\Delta+m^{2})^{s}-m^{2s}$, with $s\in (0,1)$.
\medskip

\noindent
Our first result is the following 
\begin{thm}\label{thm1}
Let $m>0$ and $f:\R^{N+1} \rightarrow \R$ be a function satisfying the assumptions $(f1)-(f6)$.
Then there exists a solution $u\in \mathbb{H}^{s}_{m,T}$ to (\ref{P}). In particular, $u$ belongs to $\mathcal{C}^{0,\alpha}([0,T]^{N})$ for some $\alpha \in (0,1)$.
\end{thm}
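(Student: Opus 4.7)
The plan is to apply a Rabinowitz-type linking theorem to the natural energy functional attached to \eqref{P}. The key structural observation is that the bilinear form $\mathcal{Q}(u,u) - m^{2s}|u|_{L^{2}}^{2}$ on $\mathbb{H}^{s}_{m,T}$ has a one-dimensional kernel, namely the constants, since the eigenvalue $(\omega^{2}|k|^{2}+m^{2})^{s} - m^{2s}$ vanishes exactly at $k=0$ and is strictly positive for $k\neq 0$. I would therefore split $\mathbb{H}^{s}_{m,T} = V \oplus W$ orthogonally, with $V = \R$ the constants and $W = \{u \in \mathbb{H}^{s}_{m,T} : \int_{(0,T)^{N}} u\, dx = 0\}$ the zero-mean subspace, and consider
\begin{equation*}
\mathcal{J}(u) = \frac{1}{2}\bigl[|u|^{2}_{\mathbb{H}^{s}_{m,T}} - m^{2s}|u|_{L^{2}}^{2}\bigr] - \int_{(0,T)^{N}} F(x,u)\, dx.
\end{equation*}
Using $(f2)$, $(f4)$ and the continuous embedding of $\mathbb{H}^{s}_{m,T}$ into $L^{q}((0,T)^{N})$ (compact for $q < 2^{\s}_{s}$), $\mathcal{J}$ is of class $C^{1}$ and its critical points are exactly the weak solutions of \eqref{P}.

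I would then verify the linking geometry. On $W$ the quadratic part is equivalent to $|\cdot|^{2}_{\mathbb{H}^{s}_{m,T}}$, thanks to the spectral gap $\min_{k\neq 0}[(\omega^{2}|k|^{2}+m^{2})^{s}-m^{2s}]>0$; combined with $(f3)$ and $(f4)$ this yields $\mathcal{J}(u) \geq \alpha > 0$ for $u \in W$ with $|u|_{\mathbb{H}^{s}_{m,T}} = \rho$, some small $\rho>0$. On the other hand, the pointwise bound $F(x,t) \geq a|t|^{\mu} - b$ (a consequence of $(f2)$ and $(f5)$ already noted in the introduction) forces $\mathcal{J} \to -\infty$ along $V$, and more generally along $V \oplus \R e$ for any fixed $e \in W$. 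Choosing $e \in W$ with $|e|_{\mathbb{H}^{s}_{m,T}} = 1$ and $Q = \{c + te : c \in V,\ |c| \leq R_{1},\ 0 \leq t \leq R_{2}\}$ with $R_{1}, R_{2}$ sufficiently large ensures $\mathcal{J} \leq 0$ on $\partial Q$, so that $\partial Q$ links $\partial B_{\rho} \cap W$.

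Next I would establish the Palais-Smale condition. For $(u_{n}) \subset \mathbb{H}^{s}_{m,T}$ with $\mathcal{J}(u_{n}) \to c$ and $\mathcal{J}'(u_{n}) \to 0$, I would decompose $u_{n} = c_{n} + \tilde u_{n}$ with $\tilde u_{n} \in W$. The Ambrosetti-Rabinowitz identity $\mathcal{J}(u_{n}) - \mu^{-1}\langle \mathcal{J}'(u_{n}), u_{n}\rangle$, combined with $(f5)$, first controls $|\tilde u_{n}|^{2}_{\mathbb{H}^{s}_{m,T}}$ in terms of $|c_{n}|$; substituting this back into $\mathcal{J}(u_{n}) = c + o(1)$ and using $F(x,t) \geq a|t|^{\mu} - b$ together with Minkowski's inequality then forces $|c_{n}|$ itself to stay bounded, since otherwise the superquadratic term $|c_{n}|^{\mu}$ would dominate. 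Boundedness in hand, the compact embedding $\mathbb{H}^{s}_{m,T} \hookrightarrow L^{p+1}((0,T)^{N})$ gives strong convergence of a subsequence in a standard way, and the linking theorem yields a nontrivial critical point $u \in \mathbb{H}^{s}_{m,T}$, which is a weak solution of \eqref{P}.

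For the Hölder regularity, I would bootstrap: the subcritical growth $p < 2^{\s}_{s} - 1$ permits a Moser iteration, performed either directly on the spectral equation or, as suggested by the \emph{Extension Method} in the keywords, on the Caffarelli-Silvestre-Stinga-Torrea extension of $u$ to the half-cylinder $(0,T)^{N}\times(0,\infty)$, yielding $u \in L^{\infty}((0,T)^{N})$; local Hölder regularity for the fractional operator then gives $u \in \mathcal{C}^{0,\alpha}([0,T]^{N})$. The most delicate step I anticipate is precisely the boundedness of Palais-Smale sequences along the degenerate direction $V$: on $W$ the quadratic part carries the estimate as usual, but on $V$ the entire control must be extracted from the nonlinearity, which is exactly why the superquadratic hypothesis $(f5)$ and the sign condition $(f6)$ are imposed.
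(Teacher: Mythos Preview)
Your proposal is correct and follows the same overall strategy as the paper (Linking Theorem with a one-dimensional ``bad'' direction coming from the constants), but the implementation differs in two places.

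First, the paper does \emph{not} run the variational argument directly on $\mathbb{H}^{s}_{m,T}$. It passes to the Caffarelli--Silvestre type extension on the half-cylinder $\mathcal{S}_{T}=(0,T)^{N}\times(0,\infty)$ and works with the functional
\[
\mathcal{J}_{m}(v)=\tfrac12\|v\|^{2}_{\mathbb{X}^{s}_{m,T}}-\tfrac{m^{2s}}{2}|v(\cdot,0)|^{2}_{L^{2}}-\int_{\partial^{0}\mathcal{S}_{T}}F(x,v)\,dx
\]
on $\mathbb{X}^{s}_{m,T}$, with the splitting $\mathbb{X}^{s}_{m,T}=\langle\theta(my)\rangle\oplus\mathbb{Z}^{s}_{m,T}$. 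Under the trace/extension isometry this is exactly your $V\oplus W$ picture, so conceptually nothing changes; your direct approach is more elementary (no extension theory needed for existence), while the paper's detour pays off later: regularity is obtained by Moser iteration on the extended problem and an appeal to Fall--Felli for the degenerate elliptic equation, and the same framework is reused in Section~6 to pass to the limit $m\to 0$ and prove Theorem~2.

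Second, the Palais--Smale boundedness is organized differently. You decompose $u_{n}=c_{n}+\tilde u_{n}$, use $(f5)$ to bound $|\tilde u_{n}|^{2}$ by $C(1+|c_{n}|)$, and then kill $|c_{n}|$ via $F(x,t)\geq a|t|^{\mu}-b$. The paper avoids splitting: from $\mathcal{J}_{m}(v_{j})-\beta\langle\mathcal{J}_{m}'(v_{j}),v_{j}\rangle$ with $\beta\in(\mu^{-1},\tfrac12)$ it first extracts $\int F(x,v_{j})\,dx\leq C(1+\|v_{j}\|)$ and $|v_{j}(\cdot,0)|_{L^{2}}^{\mu}\leq C(1+\|v_{j}\|)$, and then simply rewrites
\[
\|v_{j}\|^{2}=2\mathcal{J}_{m}(v_{j})+m^{2s}|v_{j}(\cdot,0)|^{2}_{L^{2}}+2\int F(x,v_{j})\,dx\leq C_{1}+C_{2}\|v_{j}\|^{2/\mu}+C_{3}\|v_{j}\|,
\]
which closes since $\mu>2$. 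Both arguments are valid; the paper's is a bit slicker, yours makes the role of the degenerate direction more transparent. One small point: on the bottom face $\{t=0\}$ of your set $Q$ you need $\mathcal{J}\leq 0$ on \emph{all} of $V$, not just at infinity; this is where $(f6)$ (hence $F\geq 0$) is actually used, as you note at the end.
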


\noindent
To study the problem $(\ref{P})$ we will give an alternative formulation of the operator $(-\Delta+m^{2})^{s}$ with periodic boundary condition which consists to realize it as an operator that maps a Dirichlet boundary condition to a Neumann-type condition via an extension problem on the half-cylinder $(0,T)^{N} \times (0,\infty)$; see \cite{A2} for the case $s= \frac{1}{2}$.
We recall that this argument is an adaptation of the idea originally introduced in \cite{CafSil} to study the fractional Laplacian in $\R^{N}$ (see also \cite{CabSir1, CabSir2}) and subsequently generalized for the case of the fractional Laplacian on bounded domain (see \cite{CabTan,  CapDavDupSir}). 

\noindent
As explained in more detail in Section $3$ below, for $u\in \mathbb{H}^{s}_{m,T}$ one considers the problem
\begin{equation*}
\left\{
\begin{array}{ll}
-\dive(y^{1-2s} \nabla v)+m^{2}y^{1-2s}v =0 &\mbox{ in }\mathcal{S}_{T}:=(0,T)^{N} \times (0,\infty)  \\
v_{| {\{x_{i}=0\}}}= v_{| {\{x_{i}=T\}}} & \mbox{ on } \partial_{L}\mathcal{S}_{T}:=\partial (0,T)^{N} \times [0,\infty) \\
v(x,0)=u(x)  &\mbox{ on }\partial^{0}\mathcal{S}_{T}:=(0,T)^{N} \times \{0\}
\end{array},
\right.
\end{equation*}
from where the operator $(-\Delta_{x}+m^{2})^{s}$ is obtained as 
$$
-\lim_{y\rightarrow 0} y^{1-2s} \frac{\partial v}{\partial y}(x,y) = \kappa_{s} (-\Delta_{x} + m^{2})^{s} u(x) 
$$
in weak sense and $\displaystyle{\kappa_{s}= 2^{1-2s} \frac{\Gamma(1-s)}{\Gamma(s)}}$.

\noindent
Thus, in order to study (\ref{P}), we will exploit this fact to investigate the following problem
\begin{equation}\label{R}
\left\{
\begin{array}{ll}
-\dive(y^{1-2s} \nabla v)+m^{2}y^{1-2s}v =0 &\mbox{ in }\mathcal{S}_{T}:=(0,T)^{N} \times (0,\infty)  \\
v_{| {\{x_{i}=0\}}}= v_{| {\{x_{i}=T\}}} & \mbox{ on } \partial_{L}\mathcal{S}_{T}:=\partial (0,T)^{N} \times [0,\infty) \\
\frac{\partial v}{\partial \nu^{1-2s}}=\kappa_{s} [m^{2s}v+f(x,v)]   &\mbox{ on }\partial^{0}\mathcal{S}_{T}:=(0,T)^{N} \times \{0\}
\end{array},
\right.
\end{equation}
where 
$$
\frac{\partial v}{\partial \nu^{1-2s}}:=-\lim_{y \rightarrow 0} y^{1-2s} \frac{\partial v}{\partial y}(x,y)
$$
is the conormal exterior derivative of $v$.\\
The solutions to $(\ref{R})$ are obtained as critical points of the functional $\mathcal{J}_{m}$ associated to (\ref{P})
 $$
 \mathcal{J}_{m}(v)=\frac{1}{2} ||v||_{\mathbb{X}_{m,T}^{s}}^{2}-\frac{m^{2s}\kappa_{s}}{2}|v(\cdot,0)|_{L^{2}(0,T)^{N}}^{2} -\kappa_{s}\int_{\partial^{0}\mathcal{S}_{T}} F(x,v) \,dx
 $$
defined on the space $\mathbb{X}_{m,T}^{s}$, which is the closure of the set of smooth and $T$-periodic (in $x$) functions in $\R^{N+1}_{+}$ with respect to the norm
$$
 ||v||_{\mathbb{X}_{m,T}^{s}}^{2}:=\iint_{\mathcal{S}_{T}} y^{1-2s} (|\nabla v|^{2}+m^{2s} v^{2}) dx dy.
$$ 
More precisely, we will prove that, for any fixed $m>0$, $\mathcal{J}_{m}$ satisfies the hypotheses of the Linking Theorem due to Rabinowitz \cite{Rab}.
\medskip

\noindent
When $m$ is sufficiently small, we are able to obtain estimates on critical levels $\alpha_{m}$ of the functionals $\mathcal{J}_{m}$ independently of $m$.  
In this way,  we can  pass to the limit as $m\rightarrow 0$ in (\ref{R}) and we deduce the existence of a nontrivial solution to the problem
\begin{equation}\label{P'}
\left\{
\begin{array}{ll}
(-\Delta_{x})^{s} u=f(x,u) &\mbox{ in } (0,T)^{N}   \\
u(x+Te_{i})=u(x)    &\mbox{ for all } x \in \R^{N}, \quad i=1, \dots, N
\end{array}.
\right.
\end{equation}
This result can be stated as follows
\begin{thm}\label{thmdue}
Under the same assumptions on $f$ of Theorem \ref{thm1}, 
the problem (\ref{P'}) admits a nontrivial solution $u\in \mathbb{H}^{s}_{T}\cap \mathcal{C}^{0,\alpha}([0,T]^{N})$.
\end{thm}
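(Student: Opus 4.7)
The strategy is to apply Theorem \ref{thm1} to $\mathcal{J}_m$ for each $m\in(0,m_0]$, obtaining a linking-level critical point $v_m\in\mathbb{X}^{s}_{m,T}$ of the extension problem (\ref{R}), and then to pass to the limit $m\to 0^{+}$ to recover a nontrivial solution $v$ of the analogous extension problem with $m=0$; the trace $u=v(\cdot,0)$ will then solve (\ref{P'}). Since the $m=0$ functional is the natural limit of $\mathcal{J}_m$, the whole argument hinges on showing that the minimax construction is stable as $m\to 0^{+}$.

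The first step is to revisit the Linking Theorem used for Theorem \ref{thm1} and to verify that the geometric setup may be chosen independently of $m\in(0,m_0]$. Concretely, one picks a fixed finite-dimensional subspace (generated, say, by low-frequency harmonic extensions of Fourier modes) and a linking sphere $S_\rho$ of radius $\rho$ independent of $m$, so that the extra $m^{2s}$-weighted terms in $\mathcal{J}_m$ and in the $\mathbb{X}^{s}_{m,T}$-norm are $O(m^{2s})$ perturbations of the massless setting. This yields two-sided uniform bounds $0<c_1\leq \alpha_m\leq c_2$ with $c_1,c_2$ independent of $m$. A standard Ambrosetti--Rabinowitz computation starting from $\mathcal{J}_m(v_m)-\tfrac{1}{\mu}\langle \mathcal{J}_m'(v_m),v_m\rangle$, combined with the trace inequality $|v(\cdot,0)|_{L^2(\partial^0\mathcal{S}_T)}\leq C\|v\|_{\mathbb{X}^{s}_{0,T}}\leq C\|v\|_{\mathbb{X}^{s}_{m,T}}$ on the bounded section $(0,T)^N$, then allows the negative quadratic boundary term to be absorbed for $m$ small and delivers the $m$-uniform bound $\|v_m\|_{\mathbb{X}^{s}_{0,T}}\leq C$.

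Next, I would extract a subsequence $v_m\rightharpoonup v$ weakly in $\mathbb{X}^{s}_{0,T}$. The compactness of the trace embedding into $L^q(\partial^0\mathcal{S}_T)$ for every $q\in[1,2N/(N-2s))$ on the bounded set $(0,T)^N$, together with the subcritical growth $(f4)$, allows one to pass to the limit in $\int F(x,v_m(\cdot,0))\,dx$ and in $\int v_m(\cdot,0)\, f(x,v_m(\cdot,0))\,dx$. The term $m^{2s}\kappa_s|v_m(\cdot,0)|^2_{L^2}$ vanishes in the limit since the $L^2$-trace stays bounded. Hence $v$ is a weak solution of the $m=0$ extension problem, so that $u=v(\cdot,0)$ solves (\ref{P'}). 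Nontriviality is enforced by the lower bound $\alpha_m\geq c_1>0$: were $v\equiv 0$, the strong convergence of the traces in $L^{p+1}$ would force $\alpha_m=\mathcal{J}_m(v_m)\to 0$, contradicting $\alpha_m\geq c_1$. The H\"older regularity $u\in\mathcal{C}^{0,\alpha}([0,T]^N)$ then follows from the Muckenhoupt $A_2$-weighted elliptic theory already employed for Theorem \ref{thm1}.

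The principal obstacle is securing the $m$-uniform lower bound $\alpha_m\geq c_1>0$: the quadratic part $\tfrac12\|v\|^2_{\mathbb{X}^{s}_{m,T}}-\tfrac{m^{2s}\kappa_s}{2}|v(\cdot,0)|^2_{L^2}$ must remain coercive on $S_\rho$ with a lower bound independent of $m$, which requires a careful choice of the linking subspace so that the negative boundary contribution does not destroy the positivity as $m\to 0^{+}$. Once this uniformity and the resulting boundedness of $\alpha_m$ are in place, the rest of the argument is a routine weighted-compactness procedure in the Caffarelli--Silvestre extension.
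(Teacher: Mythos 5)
The overall strategy matches the paper exactly: derive $m$-uniform bounds $\lambda\le\alpha_m\le\delta$ for the linking levels, use these to bound the critical points $v_m$ independently of $m$, pass to the weak limit, prove nontriviality from the lower bound, then show regularity. Your sketch of the upper and lower level bounds and the nontriviality argument (from $\langle\mathcal J'_m(v_m),v_m\rangle=0$ and $\alpha_m\ge\lambda>0$) is fine and essentially the paper's.

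However, the key technical step of your proposal contains a genuine gap. You write that the Ambrosetti--Rabinowitz computation, ``combined with the trace inequality $|v(\cdot,0)|_{L^2(\partial^0\mathcal S_T)}\le C\|v\|_{\mathbb X^s_{0,T}}$,'' absorbs the negative boundary term and yields $\|v_m\|_{\mathbb X^s_{0,T}}\le C$. That trace inequality is \emph{false}. The $m=0$ norm is only the weighted Dirichlet seminorm $\iint y^{1-2s}|\nabla v|^2\,dx\,dy$, and a scaling $v_n(x,y)=n\,\zeta(y/n)$ with $\zeta(0)=1$ gives $|v_n(\cdot,0)|_{L^2}\sim n$ while $\|v_n\|_{\mathbb X^s_{0,T}}\sim n^{1-s}$, so the ratio blows up. The paper's argument is different and does not use any such trace bound: it instead extracts from the Ambrosetti--Rabinowitz structure (via Lemma~\ref{F**} and $T^{-N(\mu-2)/2}|v_m(\cdot,0)|_{L^2}^\mu\le|v_m(\cdot,0)|_{L^\mu}^\mu$) a \emph{direct} $m$-uniform bound on the $L^2(0,T)^N$ trace of $v_m$, then separately bounds $\|\nabla v_m\|_{L^2(\mathcal S_T,y^{1-2s})}$, and combines the two through a one-dimensional fundamental-theorem-of-calculus argument to obtain $L^2_{\rm loc}$ bounds in $y$ as well as the uniform $\mathbb H^s_T$ bound (\ref{baroni}) on the trace. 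You need this route because the bare gradient seminorm does not control the trace.

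A second, smaller gap: you assert the H\"older regularity ``already employed for Theorem~\ref{thm1}.'' But the Moser iteration in Section~5 uses the Sobolev trace constant $C_{2^\s_s,m}$ coming from the $m$-dependent embedding of $\mathbb X^s_{m,T}$, and this constant is not controlled uniformly as $m\to0$. The paper therefore redoes the iteration in Section~6 with the $m$-independent constant from (\ref{benyi}), splitting off the mean (see (\ref{berti})--(\ref{iter77})) to preserve uniformity, and only then passes to the limit to get $v(\cdot,0)\in L^q$ for all $q<\infty$ and finally H\"older continuity via \cite{FallFelli}. Your proposal needs an analogous uniform argument; simply quoting the $m>0$ regularity result is not enough.
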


\noindent
The paper is organized as follows: in Section $2$ we collect some preliminaries results which we will use later to study the problem $(\ref{P})$; in Section $3$ we show that the problem (\ref{P}) can be realized in a local manner through the nonlinear problem (\ref{R}); 
in Section $4$ we verify that, for any fixed $m>0$, the functional $\mathcal{J}_{m}$ satisfies the Linking hypotheses; in Section $5$ we study the regularity of solutions of problem $(\ref{P})$; in the last Section we show that we can find a nontrivial H\"older continuous solution to $(\ref{P'})$ by passing to the limit in $(\ref{P})$ as $m\rightarrow 0$.

\section{Preliminaries}

In this section we introduce some notations and facts which will be frequently used in the sequel of paper.

We denote the upper half-space in $\R^{N+1}$ by 
$$
\R^{N+1}_{+}=\{(x,y)\in \R^{N+1}: x\in \R^{N}, y>0 \}.
$$
Let $\mathcal{S}_{T}=(0,T)^{N}\times(0,\infty)$ be the half-cylinder in $\R^{N+1}_{+}$ with basis $\partial^{0}\mathcal{S}_{T}=(0,T)^{N}\times \{0\}$
and we denote by $\partial_{L}\mathcal{S}_{T}=\partial (0,T)^{N}\times [0,+\infty)$ its lateral boundary.

With $||v||_{L^{r}(\mathcal{S}_{T})}$ we always denote the norm of $v\in L^{r}(\mathcal{S}_{T})$ and with $|u|_{L^{r}(0,T)^{N}}$ the $L^{r}(0,T)^{N}$ norm of $u \in L^{r}(0,T)^{N}$.

Let $s\in (0,1)$ and $m> 0$. Let $A\subset \R^{N}$ be a domain. We denote by $L^{2}(A\times \R_{+},y^{1-2s})$
the space of all measurable functions $v$ defined on $A\times \R_{+}$ such that
$$
\iint_{A\times \R_{+}} y^{1-2s} v^{2} dxdy<\infty.
$$
We say that  $v\in H^{1}_{m}(A\times \R_{+},y^{1-2s})$ if $v$ and its weak gradient $\nabla v$ belong to $L^{2}(A\times \R_{+},y^{1-2s})$.
The norm of $v$ in $H^{1}_{m}(A\times \R_{+},y^{1-2s})$ is given by
$$
\iint_{A\times \R_{+}} y^{1-2s} (|\nabla v|^{2}+m^{2}v^{2}) \,dxdy<\infty.
$$
It is clear that $H^{1}_{m}(A\times \R_{+},y^{1-2s})$ is a Hilbert space with the inner product
$$
\iint_{A\times \R_{+}} y^{1-2s} (\nabla v \nabla z+m^{2}v z) \,dxdy.
$$
When $m=1$, we set  $H^{1}(A\times \R_{+},y^{1-2s})=H^{1}_{1}(A\times \R_{+},y^{1-2s})$.

We denote by $\mathcal{C}^{\infty}_{T}(\R^{N})$ the space of functions 
$u\in \mathcal{C}^{\infty}(\R^{N})$ such that $u$ is $T$-periodic in each variable, that is
$$
u(x+e_{i}T)=u(x) \mbox{ for all } x\in \R^{N}, i=1, \dots, N. 
$$
Let $u\in  \mathcal{C}^{\infty}_{T}(\R^{N})$. Then we know that 
$$
u(x)=\sum_{k\in \Z^{N}} c_{k} \frac{e^{\imath\omega k\cdot x}}{\sqrt{T^{N}}} \quad \mbox{ for all } x\in \R^{N},
$$
where 
$$
\omega=\frac{2\pi}{T} \quad \mbox{ and } \quad c_{k}=\frac{1}{\sqrt{T^{N}}} \int_{(0,T)^{N}} u(x)e^{-\imath k\omega \cdot x}dx \quad (k\in \Z^{N})
$$ 
are the Fourier coefficients of $u$. We define the fractional Sobolev space $\mathbb{H}^{s}_{m,T}$ as the closure of 
$\mathcal{C}^{\infty}_{T}(\R^{N})$ under the norm 
\begin{equation*}\label{h12norm}
|u|^{2}_{\mathbb{H}^{s}_{m,T}}:= \sum_{k\in \Z^{N}} (\omega^{2}|k|^{2}+m^{2})^{s} \, |c_{k}|^{2}. 
\end{equation*}
When $m=1$, we set $\mathbb{H}^{s}_{T}=\mathbb{H}^{s}_{1,T}$ and $|\cdot |_{\mathbb{H}^{s}_{T}}=|\cdot|_{\mathbb{H}^{s}_{1,T}}$.
Now we introduce the functional space $\mathbb{X}^{s}_{m,T}$ defined as the completion of 
\begin{align*}
\mathcal{C}_{T}^{\infty}(\overline{\R^{N+1}_{+}})=\Bigl\{ & v\in \mathcal{C}^{\infty}(\overline{\R^{N+1}_{+}}): v(x+e_{i}T,y)=v(x,y) \\
&\mbox{ for every } (x,y)\in \overline{\R_{+}^{N+1}}, i=1, \dots, N \Bigr\}
\end{align*}
under the $H^{1}_{m}(\mathcal{S}_{T},y^{1-2s})$ norm 
\begin{equation*}
||v||^{2}_{\mathbb{X}^{s}_{m,T}}:=\iint_{\mathcal{S}_{T}} y^{1-2s} (|\nabla v|^{2}+m^{2}v^{2}) \, dxdy.
\end{equation*} 
If $m=1$, we set $\mathbb{X}^{s}_{T}=\mathbb{X}^{s}_{1,T}$ and $||\cdot ||_{\mathbb{X}^{s}_{T}}=||\cdot||_{\mathbb{X}^{s}_{1,T}}$. 

We begin proving that it is possible to define a trace operator from $\mathbb{X}^{s}_{m,T}$ to the fractional space $\mathbb{H}^{s}_{m,T}$:

\begin{thm}\label{tracethm}
There exists a bounded linear operator $\textup{Tr} : \mathbb{X}^{s}_{m,T} \rightarrow \mathbb{H}^{s}_{m,T}$  such that :
\begin{itemize}
\item[(i)] $\textup{Tr}(v)=v|_{\partial^{0} \mathcal{S}_{T}}$ for all $v\in \mathcal{C}_{T}^{\infty}(\overline{\R^{N+1}_{+}}) \cap \mathbb{X}^{s}_{m,T}$;
\item[(ii)] There exists $C=C(s)>0$ such that  
$$C |\textup{Tr}(v)|_{\mathbb{H}^{s}_{m,T}}\leq ||v||_{\mathbb{X}^{s}_{m,T}} \mbox{ for every } v\in \mathbb{X}^{s}_{m,T}; 
$$ 
\item[(iii)] $\textup{Tr}$ is surjective.
\end{itemize}
\end{thm}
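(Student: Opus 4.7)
The plan is to use Fourier series in the periodic variable $x$ to decouple the problem into a family of one-dimensional problems in $y$, one for each frequency $k\in\Z^{N}$. For $v\in\mathcal{C}^{\infty}_{T}(\overline{\R^{N+1}_{+}})$ write $v(x,y)=\sum_{k\in\Z^{N}} v_{k}(y)\,e^{\imath\omega k\cdot x}/\sqrt{T^{N}}$. Parseval in $x$ gives
$$||v||_{\mathbb{X}^{s}_{m,T}}^{2}=\sum_{k\in\Z^{N}}\int_{0}^{\infty} y^{1-2s}\bigl(|v_{k}'(y)|^{2}+\lambda_{k}^{2}|v_{k}(y)|^{2}\bigr)\,dy,\qquad \lambda_{k}^{2}:=\omega^{2}|k|^{2}+m^{2},$$
while $\textup{Tr}(v)(x)=v(x,0)=\sum_{k} v_{k}(0)\,e^{\imath\omega k\cdot x}/\sqrt{T^{N}}$ is the natural candidate for the trace.

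Next I would establish the sharp one-dimensional inequality: for every $\lambda>0$ and every $w$ in the natural weighted space with $w(0)=c$,
$$\int_{0}^{\infty} y^{1-2s}\bigl(|w'|^{2}+\lambda^{2} w^{2}\bigr)\,dy \;\geq\; \kappa_{s}\,\lambda^{2s}\,|c|^{2},$$
with equality attained by the unique decaying solution $\theta$ of $-(y^{1-2s}\theta')'+\lambda^{2}y^{1-2s}\theta=0$, $\theta(0)=c$. By the rescaling $z=\lambda y$ the problem reduces to $\lambda=1$, where the profile $\theta_{s}$ is explicit in terms of the modified Bessel function $K_{s}$; the known identity $-\lim_{y\to 0^{+}} y^{1-2s}\theta_{s}'(y)=\kappa_{s}$ together with integration by parts pins down the sharp constant as $\kappa_{s}$. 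Summing the resulting bound over $k\in\Z^{N}$ yields property (ii) with $C=\sqrt{\kappa_{s}}$, property (i) is clear by construction, and the operator $\textup{Tr}$ is extended by density from $\mathcal{C}^{\infty}_{T}(\overline{\R^{N+1}_{+}})$ to all of $\mathbb{X}^{s}_{m,T}$.

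For surjectivity (iii), given $u=\sum_{k} c_{k}\,e^{\imath\omega k\cdot x}/\sqrt{T^{N}}\in\mathbb{H}^{s}_{m,T}$, I would define the harmonic extension
$$E(u)(x,y):=\sum_{k\in\Z^{N}} c_{k}\,\theta_{s}(\lambda_{k} y)\,\frac{e^{\imath\omega k\cdot x}}{\sqrt{T^{N}}},$$
with $\theta_{s}$ normalized by $\theta_{s}(0)=1$. The truncations to $|k|\leq M$ are smooth and $T$-periodic in $x$, lie in $\mathbb{X}^{s}_{m,T}$, and by the energy identity of the previous step satisfy $||E(u)^{(M)}||_{\mathbb{X}^{s}_{m,T}}^{2}=\kappa_{s}\sum_{|k|\leq M}\lambda_{k}^{2s}|c_{k}|^{2}$. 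Since $u\in\mathbb{H}^{s}_{m,T}$, these truncations form a Cauchy sequence in $\mathbb{X}^{s}_{m,T}$ whose limit $E(u)$ satisfies $||E(u)||^{2}_{\mathbb{X}^{s}_{m,T}}=\kappa_{s}|u|^{2}_{\mathbb{H}^{s}_{m,T}}$ and $\textup{Tr}(E(u))=u$ by construction.

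The main obstacle is justifying the one-dimensional energy identity with sharp constant $\kappa_{s}$: the ODE for $\theta_{s}$ is degenerate at $y=0$, and one must argue that (a) there is a unique solution in the weighted Sobolev space decaying as $y\to\infty$, (b) the limit $-\lim_{y\to 0^{+}} y^{1-2s}\theta_{s}'(y)$ exists and equals $\kappa_{s}$, and (c) the boundary-term manipulation leading to $\int_{0}^{\infty} y^{1-2s}(|\theta_{s}'|^{2}+\theta_{s}^{2})\,dy=\kappa_{s}$ is legitimate despite the degeneration of the weight. These are standard facts in the Caffarelli--Silvestre extension theory (and its periodic adaptation in \cite{A2}); once invoked, the remaining steps are routine rescaling and Fourier summation arguments.
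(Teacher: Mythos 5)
Your approach to the Fourier decomposition, the Parseval identity, and the surjectivity argument via the Bessel-function extension $v(x,y)=\sum_{k}c_{k}\theta(\sqrt{\omega^{2}|k|^{2}+m^{2}}\,y)e^{\imath\omega k\cdot x}/\sqrt{T^{N}}$ matches the paper essentially exactly, including the energy identity $\|E(u)\|^{2}_{\mathbb{X}^{s}_{m,T}}=\kappa_{s}|u|^{2}_{\mathbb{H}^{s}_{m,T}}$. Where you diverge is in the proof of the trace bound (ii): you propose to establish the \emph{sharp} one-dimensional inequality
$\int_{0}^{\infty}y^{1-2s}(|w'|^{2}+\lambda^{2}w^{2})\,dy\geq\kappa_{s}\lambda^{2s}|w(0)|^{2}$
by identifying the minimizer with the Bessel profile $\theta_{s}$ and computing its energy, then summing in $k$ to get $C=\sqrt{\kappa_{s}}$. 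The paper instead proves (ii) entirely elementarily: from the Fundamental Theorem of Calculus and H\"older one gets
$y^{1-2s}|c_{k}(0)|^{2}\leq 2y^{1-2s}|c_{k}(y)|^{2}+\frac{y}{s}\int_{0}^{\infty}t^{1-2s}|c_{k}'(t)|^{2}dt$,
which is then integrated over $y\in(0,a_{k})$ with $a_{k}=(\omega^{2}|k|^{2}+m^{2})^{-1/2}$ and multiplied by $a_{k}^{-2}$, yielding a non-sharp but explicit constant $C_{s}$ with no analysis of the Bessel ODE needed. The sharp inequality $\kappa_{s}|\textup{Tr}(v)|^{2}_{\mathbb{H}^{s}_{m,T}}\leq\|v\|^{2}_{\mathbb{X}^{s}_{m,T}}$ does appear in the paper, but later (Theorem~\ref{thm6}), and is derived as a soft consequence of the minimization characterization of the extension rather than from a scalar ODE computation. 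Your route is more economical in that it produces the sharp constant in one pass, but it front-loads exactly the technical points you flag (uniqueness of the decaying solution, existence of the conormal limit, legitimacy of the boundary-term integration by parts in a degenerate weighted space); the paper's FTC-plus-H\"older argument for (ii) sidesteps all of that and reserves the Bessel machinery for where it is unavoidable, namely surjectivity and the sharp constant. Both are correct; yours is closer in spirit to the original Caffarelli--Silvestre energy computation, while the paper's is more self-contained at the level of Theorem~\ref{tracethm}.
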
 
\begin{proof}
Let $v\in C^{\infty}_{T}(\overline{\R^{N+1}_{+}})$ such that $\|v\|_{\mathbb{X}^{s}_{m,T}}<\infty$. Then $v$ can be expressed by
$$
v(x,y)= \sum_{k\in \Z^{N}} c_{k}(y) \frac{e^{\imath \omega k\cdot x}}{{\sqrt{T^{N}}}}
$$
where $c_{k}(y)= \int_{(0,T)^{N}} v(x,y) \frac{e^{-\imath \omega k\cdot x}}{{\sqrt{T^{N}}}} \, dx$ and $c_{k}\in H^{1}_{m}(\R_{+}, y^{1-2s})$. 

We notice that, by using Parseval's identity, we have 
\begin{equation}\label{A0}
\|v\|^{2}_{\mathbb{X}^{s}_{m,T}} = \sum_{k\in \Z^{N}} \int_{0}^{+\infty} y^{1-2s} \, [ (\omega^{2} |k|^{2}+m^{2})  |c_{k}(y)|^{2} +  |c'_{k}(y)|^{2}] \, dy. 
\end{equation}

Then, we are going to prove that there exists a positive constant $C_{s}$ depending only on $s$, such that
$$
C_{s} |\textup{Tr}(v)|_{\mathbb{H}^{s}_{m,T}}^{2} \leq \|v\|_{\mathbb{X}^{s}_{m,T}}^{2} \, \mbox{ for any } v\in C^{\infty}_{T}(\overline{\R^{N+1}_{+}}) : \|v\|_{\mathbb{X}^{s}_{m,T}}<+\infty, 
$$
or equivalently
\begin{align}\label{A33}
C_{s}& \sum_{k\in \Z^{N}} (\omega^{2} |k|^{2}+m^{2})^{s} |c_{k}(0)|^{2} \nonumber \\
&\leq  \sum_{k\in \Z^{N}} \int_{0}^{+\infty} y^{1-2s} \, [ (\omega^{2} |k|^{2}+m^{2})  |c_{k}(y)|^{2} +  |c'_{k}(y)|^{2}] \, dy.
\end{align}
By the Fundamental Theorem of Calculus we can see
\begin{align*}
|c_{k}(0)| \leq |c_{k}(y)| + \Bigl|\int_{0}^{y} c'_{k}(t) \, dt \Bigr| 
\end{align*}   
and by using $(|a|+|b|)^{2}\leq 2(|a|^{2}+|b|^{2})$ we have
\begin{align}\label{A11}
|c_{k}(0)|^{2} \leq 2|c_{k}(y)|^{2} + 2 \Bigl|\int_{0}^{y} c'_{k}(t) \, dt \Bigr|^{2}
\end{align}   
for any $k\in \Z^{N}$.
Now, we observe that by using H\"older inequality we obtain 
\begin{align}\label{A22}
\int_{0}^{y} |c'_{k}(t)|\, dt &\leq \Bigl(\int_{0}^{y} t^{1-2s} |c'_{k}(t)|^{2} dt\Bigr)^{\frac{1}{2}} \Bigl(\int_{0}^{y} t^{2s-1} dt\Bigr)^{\frac{1}{2}} \nonumber \\
&= \Bigl(\int_{0}^{y} t^{1-2s} |c'_{k}(t)|^{2} dt\Bigr)^{\frac{1}{2}} \Bigl(\frac{y^{2s}}{2s}\Bigr)^{\frac{1}{2}}.
\end{align}
Then, putting together (\ref{A11}) and (\ref{A22}) we obtain 
\begin{align*}
|c_{k}(0)|^{2} \leq 2|c_{k}(y)|^{2} + \frac{y^{2s}}{s} \Bigl(\int_{0}^{+\infty} t^{1-2s} |c'_{k}(t)|^{2} dt\Bigr)
\end{align*}   
and multiplying both sides by $y^{1-2s}$ we get
\begin{align}\label{A3}
y^{1-2s} |c_{k}(0)|^{2} \leq 2 y^{1-2s} |c_{k}(y)|^{2} + \frac{y}{s} \Bigl(\int_{0}^{+\infty} t^{1-2s} |c'_{k}(t)|^{2} dt\Bigr).
\end{align}
Let $a_{k}= (\omega^{2} |k|^{2}+m^{2})^{-\frac{1}{2}}$. Integrating (\ref{A3}) over $y\in (0, a_{k})$ we deduce  
\begin{align}\label{A4}
\frac{a_{k}^{2-2s}}{2-2s} |c_{k}(0)|^{2} &\leq 2 \int_{0}^{a_{k}} y^{1-2s} |c_{k}(y)|^{2} dy + \Bigl(\int_{0}^{a_{k}} \frac{y}{s} \, dy \Bigr) \Bigl(\int_{0}^{+\infty} t^{1-2s} |c'_{k}(t)|^{2} dt\Bigr) \nonumber \\
&\leq 2 \int_{0}^{+\infty} y^{1-2s} |c_{k}(y)|^{2} dy + \frac{a_{k}^{2}}{2s} \Bigl(\int_{0}^{+\infty} t^{1-2s} |c'_{k}(t)|^{2} dt\Bigr) \nonumber \\
&= 2 \int_{0}^{+\infty} t^{1-2s} |c_{k}(t)|^{2} dt + \frac{a_{k}^{2}}{2s} \Bigl(\int_{0}^{+\infty} t^{1-2s} |c'_{k}(t)|^{2} dt\Bigr). 
\end{align}
Multiplying both sides of (\ref{A4}) by $a_{k}^{-2}= (\omega^{2} |k|^{2}+m^{2})$ we have 
\begin{align*}
\frac{(\omega^{2} |k|^{2}+m^{2})^{s}}{2-2s} |c_{k}(0)|^{2} &\leq 2 (\omega^{2} |k|^{2}+m^{2}) \int_{0}^{+\infty} t^{1-2s} |c_{k}(t)|^{2} dt \\
&+ \frac{1}{2s} \Bigl(\int_{0}^{+\infty} t^{1-2s} |c'_{k}(t)|^{2} dt\Bigr)
\end{align*}
for any $k\in \Z^{N}$.

Summing over $\Z^{N}$ we deduce
\begin{align}\label{A5}
\frac{1}{2-2s}  &\sum_{k\in \Z^{N}} (\omega^{2} |k|^{2}+m^{2})^{s} |c_{k}(0)|^{2} \leq \nonumber \\
&\leq  \sum_{k\in \Z^{N}} \Bigl[2(\omega^{2} |k|^{2}+m^{2}) \int_{0}^{+\infty} t^{1-2s} |c_{k}(t)|^{2} dt 
+ \frac{1}{2s} \Bigl(\int_{0}^{+\infty} t^{1-2s} |c'_{k}(t)|^{2} dt\Bigr)\Bigr] \nonumber \\
&\leq \max \Bigl\{2, \frac{1}{2s}\Bigr \} \sum_{k\in \Z^{N}} \int_{0}^{+\infty} t^{1-2s} \, [ (\omega^{2} |k|^{2}+m^{2})  |c_{k}(t)|^{2} +  |c'_{k}(t)|^{2}] \, dt.  
\end{align}
Taking into account (\ref{A0}) and (\ref{A5}) we have $(\ref{A33})$.

Therefore there exists a trace operator $\textup{Tr} : \mathbb{X}^{s}_{m,T} \rightarrow \mathbb{H}^{s}_{m,T}$. 
Now we prove that $\textup{Tr}$ is surjective. Let $u= \sum_{k\in \Z^{N}} c_{k} \frac{e^{\imath \omega k\cdot x}}{{\sqrt{T^{N}}}} \in \mathbb{H}^{s}_{m,T}$. 

We define 
\begin{align}\label{extu}
v(x,y)=\sum_{k\in \Z^{N}} c_{k} \theta_{k}(y) \frac{e^{\imath \omega k\cdot x}}{{\sqrt{T^{N}}}}
\end{align}  
where $\theta_{k}(y)=  \theta(\sqrt{\omega^{2} |k|^{2}+ m^{2}} y)$ and $\theta(y)\in H^{1}(\R_{+},y^{1-2s})$ solves the following ODE
\begin{equation*} \label{ccv}
\left\{
\begin{array}{cc}
&\theta^{''}+\frac{1-2s}{y}\theta^{'}-\theta=0 \mbox{ in } \R_{+}  \\
&\theta(0)=1 \mbox{ and } \theta(\infty)=0
\end{array}.
\right.
\end{equation*}
It is known (see \cite{Erd}) that $\theta(y)=\frac{2}{\Gamma(s)} (\frac{y}{2})^{s} K_{s}(y)$ where $K_{s}$ is the Bessel function of second kind with order $s$, and
being $K'_{s}=\frac{s}{y}K_{s}-K_{s-1}$, we can see that
$$
\kappa_{s}:= \int_{0}^{\infty} y^{1-2s} (|\theta'(y)|^{2}+|\theta(y)|^{2}) dy=-\lim_{y \rightarrow 0} y^{1-2s}\theta'(y)=2^{1-2s} \frac{\Gamma(1-s)}{\Gamma(s)}.
$$
Then it is clear that $v$ is smooth for $y>0$, $v$ is $T$-periodic in $x$ and satisfies 
$$
-\dive(y^{1-2s} \nabla v)+m^{2}y^{1-2s}v =0 \mbox{ in } \mathcal{S}_{T} .
$$
Now, we show that $\textup{Tr}(v)=u$. From standard properties of $K_{s}$, we know $\theta(y) \rightarrow 1$ as $y \rightarrow 0$ and 
$0<\theta(y)\leq A_{s}$ for any $y \geq 0$.
Then, being $u\in \mathbb{H}^{s}_{m,T}$, we have
$$
|v(\cdot,y)-u|_{\mathbb{H}^{s}_{m,T}}^{2}=\sum_{k\in \Z^{N}}(\omega^{2}|k|^{2}+m^{2})^{s} |c_{k}|^{2} |\theta_{k}(y)-1|^{2} \rightarrow 0 \mbox{ as } y\rightarrow 0.
$$
Finally we prove that $v\in \mathbb{X}^{s}_{m,T}$. By Parseval's identity we get 
\begin{align}
||v||^{2}_{\mathbb{X}^{s}_{m,T}}&=\iint_{\mathcal{S}_{T}} y^{1-2s} (|\nabla v|^{2} + m^{2} v^{2} ) \, dxdy \nonumber \\
&=\sum_{k\in \Z^{N}} |c_{k}|^{2} \int_{0}^{\infty} y^{1-2s} (|\theta'_{k}(y)|^{2} +|\theta_{k}(y)|^{2} ) dy  \nonumber \\
&=\sum_{k\in \Z^{N}} |c_{k}|^{2} \int_{0}^{\infty} y^{1-2s} (\omega^{2} |k|^{2}+m^{2}) (|\theta'(\sqrt{\omega^{2} |k|^{2}+m^{2}}y)|^{2} +|\theta(\sqrt{\omega^{2} |k|^{2}+m^{2}}y)|^{2} ) dy  \nonumber \\
&=\sum_{k\in \Z^{N}} |c_{k}|^{2} \frac{\sqrt{\omega^{2} |k|^{2}+m^{2}}}{(\omega^{2} |k|^{2}+m^{2})^{\frac{1-2s}{2}}} \int_{0}^{\infty} y^{1-2s} (|\theta'(y)|^{2} +|\theta(y)|^{2} )dy  \nonumber \\
&=\kappa_{s} \sum_{k\in \Z^{N}} (\omega^{2} |k|^{2}+m^{2})^{s} |c_{k}|^{2}  \nonumber \\
&=\kappa_{s} |u|^{2}_{\mathbb{H}^{s}_{m,T}}. \label{v=u}
\end{align}

\end{proof}

Now we prove the following embedding
\begin{thm}\label{thm2}
Let $N> 2s$. Then  $\textup{Tr}(\mathbb{X}^{s}_{m,T})$ is continuously embedded in $L^{q}(0,T)^{N}$ for any  $1\leq q \leq 2^{\s}_{s}$.  Moreover,  $\textup{Tr}(\mathbb{X}^{s}_{m,T})$ is compactly embedded in $L^{q}(0,T)^{N}$  for any  $1\leq q < 2^{\s}_{s}$. 
\end{thm}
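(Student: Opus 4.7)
The strategy is to reduce the claimed embedding for $\textup{Tr}(\mathbb{X}^{s}_{m,T})$ to a Sobolev-type embedding for the periodic fractional space $\mathbb{H}^{s}_{m,T}$. By Theorem \ref{tracethm}(ii) it is enough to prove that
$$
|u|_{L^{q}((0,T)^{N})} \le C\, |u|_{\mathbb{H}^{s}_{m,T}} \quad \text{for every } u\in \mathbb{H}^{s}_{m,T},\ 1\le q\le 2^{\s}_{s},
$$
together with the corresponding pre-compactness statement for $1\le q<2^{\s}_{s}$, since composition with the bounded trace operator then yields the claim.

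For the continuous embedding I would reduce to the classical Sobolev embedding on $\R^{N}$ via a cut-off argument. Fix $\eta\in C_{c}^{\infty}(\R^{N})$ with $\eta\equiv 1$ on $[0,T]^{N}$ and support in a slightly larger cube. Given $u\in \mathbb{H}^{s}_{m,T}$, extend $u$ periodically to $\R^{N}$ and set $w=\eta u$, which is compactly supported. The key estimate is
$$
\|w\|_{H^{s}(\R^{N})} \le C\, |u|_{\mathbb{H}^{s}_{m,T}}.
$$
In the spirit of the present paper I would obtain it from the extension $v$ constructed in \eqref{extu}: since $v\in \mathbb{X}^{s}_{m,T}$ is $T$-periodic in $x$, extending $v$ periodically in $x$ to $\R^{N}\times\R_{+}$ and multiplying the result by $\eta(x)$ produces a function $V$ in the weighted Sobolev space $H^{1}(\R^{N+1}_{+}, y^{1-2s})$, with compactly supported trace $w$ on $\{y=0\}$; its weighted norm is controlled by a fixed multiple of $\|v\|_{\mathbb{X}^{s}_{m,T}}$, and hence, via \eqref{v=u}, by $|u|_{\mathbb{H}^{s}_{m,T}}$. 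The standard trace theorem for $A_{2}$-weighted spaces (see \cite{CafSil}) then places $w$ in $H^{s}(\R^{N})$ with the stated bound. The classical Sobolev embedding $H^{s}(\R^{N})\hookrightarrow L^{2^{\s}_{s}}(\R^{N})$ and restriction to $(0,T)^{N}$ give the case $q=2^{\s}_{s}$; the full range $1\le q\le 2^{\s}_{s}$ then follows from H\"older's inequality on the bounded set $(0,T)^{N}$.

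For the compact embedding I would use exactly the same construction on a bounded sequence $(u_{n})\subset \mathbb{H}^{s}_{m,T}$: the sequence $(\eta u_{n})$ is bounded in $H^{s}(\R^{N})$ with uniformly compact support, so the classical Rellich--Kondrachov theorem provides a subsequence converging in $L^{q}(\R^{N})$ for every $q<2^{\s}_{s}$; restricting to $(0,T)^{N}$ yields the required pre-compactness in $L^{q}((0,T)^{N})$.

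The main obstacle is the critical exponent $q=2^{\s}_{s}$, where the embedding is continuous but not compact, so that the sharp form of the Sobolev inequality on $\R^{N}$ is needed. A secondary technical point is the boundedness of multiplication by the cut-off on the fractional scale; here this is absorbed into the weighted trace theorem, while a direct Fourier approach would instead require a commutator estimate. With those two ingredients in place, the statement is a routine consequence of Sobolev--Rellich applied to compactly supported functions on $\R^{N}$.
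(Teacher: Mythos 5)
Your proposal is correct, but it takes a genuinely different route from the paper. The paper stays entirely within the Fourier framework: it invokes a periodic Sobolev inequality (Proposition~2.1 of \cite{benyi}), namely $|u|_{L^{2^{\s}_{s}}(0,T)^{N}}\leq C\bigl(\sum_{|k|\geq1}\omega^{2s}|k|^{2s}|c_{k}|^{2}\bigr)^{1/2}$ for zero-mean trigonometric polynomials, which immediately yields the continuous embedding of $\mathbb{H}^{s}_{m,T}$ into $L^{q}$ for $q\le 2^{\s}_{s}$; compactness is then obtained by interpolating against the compact embedding $\mathbb{H}^{s}_{m,T}\Subset L^{2}$, which is proved by hand via a tail estimate on the Fourier coefficients of a weakly null sequence. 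Your route instead transfers the problem to $\R^{N}$: you extend $u$ and its cylindrical extension $v$ periodically, cut off by $\eta\in C_{c}^{\infty}(\R^{N})$ on the weighted extension $V=\eta v$ (so that the Leibniz term $v\nabla\eta$ is controlled by the $m^{2}v^{2}$ part of the $\mathbb{X}^{s}_{m,T}$-norm — this does require $m>0$, which holds here), apply the $A_{2}$-weighted trace inequality of \cite{CafSil} to land in $H^{s}(\R^{N})$, and then invoke classical Sobolev and Rellich--Kondrachov on $\R^{N}$. Your observation that doing the cut-off on the extension sidesteps a fractional Leibniz/commutator estimate is a nice point, and the argument is sound; what it buys is reliance on only the classical Euclidean $H^{s}$ theory, at the cost of extra periodic-extension and cut-off bookkeeping. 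The paper's Fourier argument is shorter and more self-contained in the torus setting, and it also gives the dependence of constants on $m$ more transparently, which matters in Section~6 where $m\to 0$. One small inaccuracy in your write-up: you do not need the \emph{sharp} constant in the critical Sobolev inequality — any constant suffices for the continuous embedding, and compactness is only claimed for subcritical $q$.
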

\begin{proof}
By Theorem \ref{tracethm} we know that there exists a continuous embedding from $\mathbb{X}^{s}_{m,T}$ to $\mathbb{H}^{s}_{m,T}$.
Now, we prove that $ \mathbb{H}^{s}_{m,T}$ is continuously embedded in $L^{q}(0,T)^{N}$ for any $q \leq 2^{\s}_{s}$ and compactly in $L^{q}(0,T)^{N}$ for any $q< 2^{\s}_{s}$.

By using Proposition $2.1.$ in \cite{benyi}, we know that there exists a constant $C_{2^{\s}_{s}}>0$ such that
\begin{align}\label{benyi}
|u|_{L^{2^{\s}_{s}}(0,T)^{N}} \leq C_{2^{\s}_{s}} \Bigl( \sum_{|k|\geq 1} \omega^{2s}|k|^{2s} |c_{k}|^{2} \Bigr)^{\frac{1}{2}} 
\end{align}
for any $u\in \mathcal{C}^{\infty}_{T}(\R^{N})$ such that $\frac{1}{T^{N}}\int_{(0,T)^{N}} u(x) \, dx =0$.
As a consequence, fixed $2 \leq q\leq 2^{\s}_{s}$, we have
\begin{equation}\label{6.22}
|u|_{L^{q}(0,T)^{N}}\leq C \Bigl( \sum_{k\in \Z^{N}} |c_{k}|^{2} (\omega^{2}|k|^{2}+m^{2})^{s}   \Bigr)^{\frac{1}{2}}
\end{equation}
for any $u\in \mathbb{H}^{s}_{m,T}$, that is $\mathbb{H}^{s}_{m,T}$ is continuously embedded in $L^{q}(0,T)^{N}$ for any $2 \leq q\leq 2^{\s}_{s}$. 
Now, we proceed as the proof of Theorem $4$ in \cite{A2} to prove that $\mathbb{H}^{s}_{m,T} \Subset L^{q}(0,T)^{N}$ for any $2 \leq q<2^{\s}_{s}$.
Fix $q\in [2,2^{\s}_{s})$. Then, by using  (\ref{6.22}) and the interpolation inequality we obtain 
\begin{equation}\label{6.23}
|u|_{L^{q}(0,T)^{N}}\leq C|u|^{\theta}_{L^{2}(0,T)^{N}} \Bigl( \sum_{k\in \Z^{N}} |c_{k}|^{2} (\omega^{2}|k|^{2}+m^{2})^{s}   \Bigr)^{1-\theta},
\end{equation}
for some real positive number $\theta\in (0,1)$.

Now, taking into account (\ref{6.23}), it is enough to prove that $\mathbb{H}^{s}_{m,T}\Subset L^{2}(0,T)^{N}$ to infer that $\mathbb{H}^{s}_{m,T}$ is compactly embedded in $L^{q}(0,T)^{N}$ for every $q\in [2,2^{\s}_{s})$. 

Let us assume that $u^{j}\rightharpoonup 0$ in $\mathbb{H}^{s}_{m,T}$. Then 
\begin{equation}\label{6.24}
\lim_{j\rightarrow \infty} |c^{j}_{k}|^{2}(\omega^{2}|k|^{2}+m^{2})^{s}=0 \quad \forall k\in \Z^{N} 
\end{equation}
and
\begin{equation}\label{6.25}
 \sum_{k\in \Z^{N}}  |c^{j}_{k}|^{2} (\omega^{2}|k|^{2}+m^{2})^{s}\leq C \quad \forall j\in \N.
\end{equation}
Fix $\varepsilon>0$. Then there exists $\nu_{\varepsilon}>0$ such that $(\omega^{2}|k|^{2}+m^{2})^{-s}<\varepsilon$ for $|k|>\nu_{\varepsilon}$. 
By (\ref{6.25}) we have
\begin{align*}
\sum_{k\in \Z^{N}} |c_{k}^{j}|^{2}&=\sum_{|k|\leq \nu_{\varepsilon}} |c_{k}^{j}|^{2}+\sum_{|k|>\nu_{\varepsilon}} |c_{k}^{j}|^{2} \\
&= \sum_{|k|\leq \nu} |c_{k}^{j}|^{2}+\sum_{|k|>\nu_{\varepsilon}} |c_{k}^{j}|^{2}(\omega^{2}|k|^{2}+m^{2})^{s} (\omega^{2}|k|^{2}+m^{2})^{-s} \\
&\leq \sum_{|k|\leq \nu_{\varepsilon}} |c_{k}^{j}|^{2}+C\varepsilon.
\end{align*}
Using (\ref{6.24}) we deduce that $\sum_{|k|\leq \nu_{\varepsilon}} |c_{k}^{j}|^{2}<\varepsilon$ for $j$ large. Thus $u^{j}\rightarrow 0$ in $L^{2}(0,T)^{N}$.

\end{proof}


Now, we conclude this section giving some elementary results about the nonlinearity $f(x,t)$. More precisely, by using the assumptions $(f2), (f3)$ and $(f4)$, we can deduce some bounds from above and below for the nonlinear term $f(x,t)$ and its primitive $F(x,t)$. This part is quite standard and the proofs of the following Lemmas can be found, for instance, in \cite{AR} and \cite{Rab}.
\begin{lem}\label{lfF}
Let $f:[0,T]^{N}\times \R \rightarrow \R$ satisfying conditions $(f1)$-$(f3)$. Then, for any $\varepsilon>0$ there 
exists $C_{\varepsilon}>0$ such that
\begin{equation*}\label{f}
|f(x,t)|\leq  2\varepsilon|t|+(p+1)C_{\varepsilon}|t|^{p} \quad \forall t\in{\R} \, \forall x\in [0,T]^{N}
\end{equation*}
and
\begin{equation}\label{F}
|F(x,t)|\leq  \varepsilon|t|^{2}+C_{\varepsilon}|t|^{p+1} \quad \forall t\in {\R} \, \forall x\in [0,T]^{N}.
\end{equation}
\end{lem}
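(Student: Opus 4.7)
The plan is to prove the bound on $f$ by splitting the analysis into the small-$|t|$ regime (controlled by $(f3)$) and the large-$|t|$ regime (controlled by the polynomial growth $(f4)$), and then integrate to obtain the bound on $F$.

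First, I would fix $\varepsilon>0$. By $(f3)$, the ratio $f(x,t)/t$ tends to $0$ as $t\to 0$ uniformly in $x\in\R^{N}$, so there exists $\delta=\delta(\varepsilon)>0$ such that
\[
|f(x,t)|\leq 2\varepsilon\,|t| \qquad \text{for all } x\in[0,T]^{N},\ |t|\leq \delta .
\]
Next, for $|t|\geq \delta$ I would invoke the polynomial growth assumption $(f4)$, which gives a constant $C>0$ such that $|f(x,t)|\leq C(1+|t|^{p})$. Since $|t|\geq \delta$, we have $1\leq \delta^{-p}|t|^{p}$, so
\[
|f(x,t)|\leq C(\delta^{-p}+1)\,|t|^{p} \qquad \text{for all } x\in[0,T]^{N},\ |t|\geq \delta.
\]
Setting $C_{\varepsilon}:=\dfrac{C(\delta^{-p}+1)}{p+1}$ and combining the two regimes (adding the second bound on the small-$|t|$ interval and the first bound on the large-$|t|$ interval, both of which are trivially nonnegative) yields the desired pointwise estimate
\[
|f(x,t)|\leq 2\varepsilon\,|t|+(p+1)C_{\varepsilon}\,|t|^{p} \qquad \text{for all } x\in[0,T]^{N},\ t\in\R .
\]

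For the bound on $F$, I would simply integrate. Since $F(x,t)=\int_{0}^{t} f(x,\tau)\,d\tau$, for $t\geq 0$
\[
|F(x,t)|\leq \int_{0}^{t}|f(x,\tau)|\,d\tau\leq \int_{0}^{t}\bigl[2\varepsilon\,\tau+(p+1)C_{\varepsilon}\,\tau^{p}\bigr]d\tau=\varepsilon\,t^{2}+C_{\varepsilon}\,t^{p+1},
\]
and the analogous computation for $t<0$ (integrating from $t$ to $0$) gives the same result with $|t|$ in place of $t$. This produces the inequality \eqref{F}.

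There is no real obstacle here: the argument is entirely elementary and relies only on the definition of the little-$o$ notation in $(f3)$, the polynomial bound $(f4)$, and a monotone integration. The only mild care needed is choosing the constant $C_{\varepsilon}$ consistently between the two regimes and ensuring the coefficient $p+1$ appears, which is exactly what is absorbed in the definition $C_{\varepsilon}:=\tfrac{C(\delta^{-p}+1)}{p+1}$ so that after integration the $p+1$ cancels and produces the clean coefficient $C_{\varepsilon}$ in front of $|t|^{p+1}$ in \eqref{F}.
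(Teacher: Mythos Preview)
Your argument is correct and is precisely the standard one; the paper does not give its own proof but simply refers to \cite{AR} and \cite{Rab}, where this elementary estimate is obtained exactly as you describe. Note also that the hypothesis ``$(f1)$--$(f3)$'' in the statement is a typo in the paper: the growth bound $(f4)$ is clearly needed (the exponent $p$ appears in the conclusion), and you rightly invoke it for the large-$|t|$ regime.
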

\begin{lem}\label{F**}
Assume that $f:[0,T]^{N}\times \R \rightarrow \R$ satisfies conditions $(f1)$-$(f4)$. Then, there exist two constants $a_{3}>0$ and $a_{4}>0$  such that  
\begin{equation*}\label{F*}
F(x,t)\geq a_{3}|t|^{\mu}-a_{4} \quad \forall t\in{\R} \, \forall x\in [0, T]^{N}.
\end{equation*}
\end{lem}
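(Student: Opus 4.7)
The statement to prove is the classical Ambrosetti--Rabinowitz consequence that the superquadratic condition $(f5)$ forces $F$ to grow at least like $|t|^\mu$ from below. (The hypotheses actually needed are $(f1),(f2)$ together with $(f5)$; condition $(f4)$ alone gives an \emph{upper} bound and so cannot contribute to a lower bound of this form --- the reference to $(f1)$--$(f4)$ in the statement is presumably a typo for $(f1),(f2),(f5)$, as is made explicit in the remark immediately following the list of assumptions.)

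The plan is to use a standard ODE/monotonicity trick. For fixed $x\in[0,T]^N$ and $t\ge r_0$, I compute
\[
\frac{d}{dt}\bigl[t^{-\mu}F(x,t)\bigr]=t^{-\mu-1}\bigl[tf(x,t)-\mu F(x,t)\bigr]\ge 0
\]
by $(f5)$. Hence $t\mapsto t^{-\mu}F(x,t)$ is nondecreasing on $[r_0,\infty)$, which yields
\[
F(x,t)\ge\frac{F(x,r_0)}{r_0^{\mu}}\,t^{\mu}\qquad\text{for all }t\ge r_0.
\]
The analogous argument on $(-\infty,-r_0]$ (looking at $|t|^{-\mu}F(x,t)$, with a sign adjustment) gives the symmetric inequality $F(x,t)\ge \frac{F(x,-r_0)}{r_0^{\mu}}|t|^\mu$ for $t\le -r_0$.

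The next step is to handle the $x$-dependence uniformly. By $(f5)$, $F(x,\pm r_0)>0$ pointwise; by $(f1)$--$(f2)$ the map $x\mapsto \min\{F(x,r_0),F(x,-r_0)\}$ is continuous and $T$-periodic, hence attains a strictly positive minimum on the compact set $[0,T]^N$. Calling this minimum $\gamma>0$ and setting $a_3:=\gamma/r_0^{\mu}$, I obtain $F(x,t)\ge a_3|t|^{\mu}$ whenever $|t|\ge r_0$. Finally, on the compact region $[0,T]^N\times[-r_0,r_0]$ the continuous function $F$ is bounded, so there exists $M>0$ with $F(x,t)\ge -M\ge a_3|t|^{\mu}-(M+a_3 r_0^{\mu})$ for $|t|\le r_0$. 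Taking $a_4:=M+a_3 r_0^{\mu}$ gives the claim for all $t\in\R$ and all $x\in[0,T]^N$.

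There is no real obstacle: the only point that requires a moment of care is that the constants must be made \emph{uniform} in $x$, and this is exactly where the periodicity assumption $(f1)$ together with the continuity $(f2)$ is used, via compactness of $[0,T]^N$. The rest is the textbook integration of the differential inequality supplied by $(f5)$.
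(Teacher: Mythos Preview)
Your argument is correct and is precisely the standard one: integrate the differential inequality from $(f5)$ to get $|t|^{-\mu}F(x,t)$ monotone for $|t|\ge r_0$, use periodicity and continuity to make the resulting constant uniform in $x$, and absorb the bounded region $|t|\le r_0$ into $a_4$. The paper does not give its own proof of this lemma; it simply refers to \cite{AR} and \cite{Rab}, where exactly this computation appears. Your observation about the hypotheses is also apt: the statement should read $(f1),(f2),(f5)$ rather than $(f1)$--$(f4)$, and the paper itself makes this explicit in the paragraph immediately following the list of assumptions, where it derives $F(x,t)\ge a|t|^{\mu}-b$ directly from $(f2)$ and $(f5)$.
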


\section{Extension problem}

In this section we show that to study $(\ref{P})$ it is equivalent to investigate the solutions of a problem in a half-cylinder with a Neumann nonlinear boundary condition.
We start proving that

\begin{thm}
Let $u\in \mathbb{H}^{s}_{m,T}$. Then there exists a unique $v\in \mathbb{X}^{s}_{m,T}$ such that
\begin{equation}\label{extPu}
\left\{
\begin{array}{ll}
-\dive(y^{1-2s} \nabla v)+m^{2}y^{1-2s}v =0 &\mbox{ in }\mathcal{S}_{T}  \\
v_{| {\{x_{i}=0\}}}= v_{| {\{x_{i}=T\}}} & \mbox{ on } \partial_{L}\mathcal{S}_{T} \\
v(\cdot,0)=u  &\mbox{ on } \partial^{0}\mathcal{S}_{T}
\end{array}
\right.
\end{equation}
and
\begin{align}\label{conormal}
-\lim_{y \rightarrow 0} y^{1-2s}\frac{\partial v}{\partial y}(x,y)=\kappa_{s} (-\Delta_{x}+m^{2})^{s}u(x) \mbox{ in } \mathbb{H}^{-s}_{m,T},
\end{align}
where 
$$
\mathbb{H}^{-s}_{m,T}=\Bigl \{u=\sum_{k\in \Z^{N}} c_{k} \frac{e^{\imath \omega k\cdot x}}{{\sqrt{T^{N}}}}: \frac{|c_{k}|^{2}}{(\omega^{2}|k|^{2}+m^{2})^{s}}< \infty \Bigr\}
$$ 
is the dual of $\mathbb{H}^{s}_{m,T}$.
\end{thm}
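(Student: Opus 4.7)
My plan is to reuse the explicit extension already built in the proof of Theorem \ref{tracethm} and then verify the conormal identity (\ref{conormal}) by a Fourier-side computation.

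For existence, I would take $v$ defined by (\ref{extu}); that is, for $u=\sum_{k\in\Z^N} c_k \frac{e^{\imath\omega k\cdot x}}{\sqrt{T^N}}\in\mathbb{H}^s_{m,T}$, put
\begin{equation*}
v(x,y)=\sum_{k\in\Z^N} c_k\,\theta_k(y)\,\frac{e^{\imath\omega k\cdot x}}{\sqrt{T^N}}, \qquad \theta_k(y)=\theta\bigl(\sqrt{\omega^2|k|^2+m^2}\,y\bigr),
\end{equation*}
with $\theta$ the Bessel profile solving $\theta''+\frac{1-2s}{y}\theta'-\theta=0$, $\theta(0)=1$, $\theta(\infty)=0$. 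A direct rescaling shows that $\theta_k$ satisfies $\theta_k''+\frac{1-2s}{y}\theta_k'-(\omega^2|k|^2+m^2)\theta_k=0$, so termwise the series solves $-\dive(y^{1-2s}\nabla v)+m^2 y^{1-2s}v=0$ in $\mathcal{S}_T$. Periodicity in $x$ is immediate from the Fourier expansion, the trace condition $v(\cdot,0)=u$ was already checked in the surjectivity part of Theorem \ref{tracethm}, and the same theorem yields $v\in\mathbb{X}^s_{m,T}$ via the identity (\ref{v=u}).

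For uniqueness, if $v_1,v_2\in\mathbb{X}^s_{m,T}$ both solve (\ref{extPu}), I would expand $w:=v_1-v_2$ in Fourier series in $x$: each coefficient $d_k\in H^1_m(\R_+,y^{1-2s})$ satisfies the same rescaled Bessel ODE with $d_k(0)=0$, and the square-integrability in the weight $y^{1-2s}$ rules out the exponentially growing branch, leaving only $d_k\equiv 0$. Equivalently, one may multiply the homogeneous equation by $w$ and integrate by parts on $\mathcal{S}_T$: the lateral-boundary terms cancel by periodicity, the base term vanishes because $w(\cdot,0)=0$, giving $\|w\|_{\mathbb{X}^s_{m,T}}=0$.

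For (\ref{conormal}), I would differentiate the series for $v$ in $y$ and rescale via $t=\sqrt{\omega^2|k|^2+m^2}\,y$, so that $y^{1-2s}=(\omega^2|k|^2+m^2)^{-(1-2s)/2}t^{1-2s}$, producing
\begin{equation*}
-y^{1-2s}\partial_y v(x,y)=\sum_{k\in\Z^N}c_k(\omega^2|k|^2+m^2)^s\,g(t_k)\,\frac{e^{\imath\omega k\cdot x}}{\sqrt{T^N}},
\end{equation*}
where $g(t):=-t^{1-2s}\theta'(t)$ and $t_k=\sqrt{\omega^2|k|^2+m^2}\,y$. The identity $\lim_{t\to 0^+}g(t)=\kappa_s$ recorded in the proof of Theorem \ref{tracethm} then identifies the formal termwise limit with $\kappa_s(-\Delta_x+m^2)^s u$, which indeed belongs to $\mathbb{H}^{-s}_{m,T}$. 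The step I expect to be the main obstacle is upgrading this termwise convergence to convergence in the dual norm: this will follow once I know that $g$ is bounded on $[0,\infty)$ (which can be read off from the asymptotic behaviour of $K_s$ at $0$ and $\infty$ underpinning the definition of $\theta$), since then
\begin{equation*}
\bigl\|-y^{1-2s}\partial_y v(\cdot,y)-\kappa_s(-\Delta_x+m^2)^s u\bigr\|_{\mathbb{H}^{-s}_{m,T}}^2=\sum_{k\in\Z^N}|c_k|^2(\omega^2|k|^2+m^2)^s|g(t_k)-\kappa_s|^2
\end{equation*}
tends to zero as $y\to 0$ by dominated convergence in the weighted $\ell^2$ space with weight $|c_k|^2(\omega^2|k|^2+m^2)^s$, whose total mass is exactly $|u|^2_{\mathbb{H}^s_{m,T}}<\infty$.
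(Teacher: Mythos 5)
Your proposal is correct, and it reaches the same explicit Fourier-side extension and the same conormal computation as the paper, but the route to existence and uniqueness is genuinely different. The paper sets up the variational problem $\min\{\|v\|_{\mathbb{X}^s_{m,T}}^2 : \textup{Tr}(v)=u\}$, extracts a minimizer via the compactness embedding, obtains uniqueness from strict convexity of the norm, observes that the minimizer satisfies the Euler--Lagrange equation (i.e.\ is a weak solution of (\ref{extPu})), and only then identifies the minimizer with the explicit Bessel-profile series (\ref{extu}). You instead write down (\ref{extu}) at the outset, verify termwise that the rescaled Bessel profile $\theta_k$ satisfies the ODE so that the series solves the PDE, appeal to (\ref{v=u}) for membership in $\mathbb{X}^s_{m,T}$, and prove uniqueness by testing the homogeneous weak formulation with the difference $w$ itself, which has zero trace, yielding $\|w\|_{\mathbb{X}^s_{m,T}}=0$. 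Your energy argument for uniqueness is actually a bit more transparent than the paper's (which infers that the explicit series must equal the minimizer; strictly speaking this relies on the minimizer being the only weak solution, a point the paper leaves implicit). What the paper's variational route buys, and yours does not directly yield, is the minimality property $\|v\|_{\mathbb{X}^s_{m,T}}\le\|z\|_{\mathbb{X}^s_{m,T}}$ for any $z$ with the same trace, which the paper records as property (ii) immediately after and uses later (e.g.\ in Theorem \ref{thm6}); if you wanted that as well, you would add the standard convexity observation that a weak solution of a convex variational problem is a minimizer. Your treatment of (\ref{conormal}) — rescaling to $g(t)=-t^{1-2s}\theta'(t)$, using $g(0^+)=\kappa_s$ and boundedness of $g$, and dominated convergence in $\ell^2$ against the summable weight $|c_k|^2(\omega^2|k|^2+m^2)^s$ — is exactly the paper's argument made slightly more explicit, and the boundedness of $g$ is precisely the estimate $0< -y^{1-2s}\theta'(y)\le B_s$ the paper cites from the Bessel function literature.
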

\begin{proof}
Let $u=\sum_{k\in \Z^{N}} c_{k} \frac{e^{\imath \omega k\cdot x}}{{\sqrt{T^{N}}}}\in \mathcal{C}^{\infty}_{T}(\R^{N})$.
Let us consider the following problem
\begin{equation}\label{minpb}
\min\{ ||v||^{2}_{\mathbb{X}^{s}_{m,T}}: v\in \mathbb{X}^{s}_{m,T}, \textup{Tr}(v)=u \}.
\end{equation}
By using the Theorem \ref{thm2}, we can find a minimizer to $(\ref{minpb})$. 
Since $||\cdot||^{2}_{\mathbb{X}^{s}_{m,T}}$ is strictly convex, such minimizer is unique and we denote it by $v$.
As a consequence, for any $\phi \in \mathbb{X}^{s}_{m,T}$ such that $\textup{Tr}(\phi)=0$
\begin{equation}\label{WE}
\iint_{\mathcal{S}_{T}} y^{1-2s} (\nabla v \nabla \phi + m^{2} v \phi  ) \, dxdy=0, 
\end{equation}
that is $v$ is a weak solution to $(\ref{extPu})$.
Since the function defined in (\ref{extu}) is a solution to (\ref{extPu}), by the uniqueness of minimizer, we deduce that $v$ is given by 
$$
v(x,y)=\sum_{k\in \Z^{N}} c_{k} \theta_{k}(y) \frac{e^{\imath \omega k\cdot x}}{{\sqrt{T^{N}}}}.
$$
where $\theta_{k}(y)=\theta(\sqrt{\omega^{2}|k|^{2}+ m^{2}} y)$. In particular, by (\ref{v=u}), we have 
$$
||v||_{\mathbb{X}^{s}_{m,T}}=\sqrt{\kappa_{s}} |u|_{\mathbb{H}^{s}_{m,T}}.
$$

Then
\begin{align*}
&\Bigl| -y^{1-2s} \frac{\partial v}{\partial y}(\cdot,y)-\kappa_{s} (-\Delta+m^{2})^{s}u \Bigr|^{2}_{\mathbb{H}^{-s}_{m,T}} \\
&=\sum_{k\in \Z^{N}} \frac{1}{(\omega^{2}|k|^{2}+m^{2})^{s}} |c_{k}|^{2} \Bigl|\sqrt{\omega^{2}|k|^{2}+m^{2}} \theta_{k}'(y)y^{1-2s}+\kappa_{s}(\omega^{2}|k|^{2}+m^{2})^{s} \Bigr|^{2} \\
&=\sum_{k\in \Z^{N}} (\omega^{2}|k|^{2}+m^{2})^{s}  |c_{k}|^{2} \Bigl|[(\omega^{2}|k|^{2}+m^{2})y]^{1-2s} \theta_{k}'(y)+\kappa_{s} \Bigr|^{2}
\end{align*}
and by using $u\in \mathbb{H}^{s}_{m,T}$, $-y^{1-2s} \theta'(y) \rightarrow \kappa_{s}$ as $y\rightarrow 0$ and $0<-\kappa_{s}y^{1-2s} \theta'(y)\leq B_{s}$ for any $y \geq 0$ (see \cite{Erd}), we deduce (\ref{conormal}).

\end{proof}

Therefore, for any given $u\in \mathbb{H}^{s}_{m,T}$ we can find a unique function $v=\textup{Ext}(u)\in \mathbb{X}^{s}_{m,T}$, which will be called the extension of $u$, such that
\begin{compactenum}[(i)]
\item $v$ is smooth for $y>0$, $T$-periodic in $x$ and $v$ solves (\ref{extPu});
\item $||v||_{\mathbb{X}^{s}_{m,T}}\leq ||z||_{\mathbb{X}^{s}_{m,T}}$ for any $z\in \mathbb{X}^{s}_{m,T}$ such that $\textup{Tr}(z)=u$;
\item $||v||_{\mathbb{X}^{s}_{m,T}}=\sqrt{\kappa_{s}} |u|_{\mathbb{H}^{s}_{m,T}}$;
\item We have
$$
\lim_{y\rightarrow 0}-y^{1-2s} \frac{\partial v}{\partial y}(x,y)= \kappa_{s} (-\Delta+m^{2})^{s}u(x) \mbox{ in } \mathbb{H}^{-s}_{m,T}.
$$
\end{compactenum}

Now, modifying the proof of Lemma $2.2$ in \cite{CapDavDupSir} we deduce 
\begin{thm}
Let $g\in \mathbb{H}^{-s}_{m,T}$. Then, there is a unique solution to the problem:
$$
\mbox{ find } u\in \mathbb{H}^{s}_{m,T} \quad \mbox{ such that } \quad (-\Delta+m^{2})^{s} u=g. 
$$
Moreover $u$ is the trace of $v\in \mathbb{X}^{s}_{m,T}$, where $v$ is the unique solution to (\ref{R}), that is
for every $\phi \in \mathbb{X}^{s}_{m,T}$ it holds
$$
\iint_{\mathcal{S}_{T}} y^{1-2s} (\nabla v \nabla \phi + m^{2} v \phi  ) \, dxdy=\kappa_{s}\langle g, \textup{Tr}(\phi)\rangle_{\mathbb{H}^{-s}_{m,T}, \mathbb{H}^{s}_{m,T}}. 
$$
\end{thm}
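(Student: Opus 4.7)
The plan is to apply the Lax--Milgram theorem (or equivalently the Riesz representation theorem, since the form is symmetric) on the Hilbert space $\mathbb{X}^{s}_{m,T}$ to obtain $v$, and then read off $u=\textup{Tr}(v)$. The bilinear form
\[
a(v,\phi)=\iint_{\mathcal{S}_{T}} y^{1-2s}(\nabla v\nabla \phi+m^{2}v\phi)\,dxdy
\]
coincides with the inner product on $\mathbb{X}^{s}_{m,T}$, so it is automatically continuous and coercive. The right--hand side $\phi\mapsto \kappa_{s}\langle g,\textup{Tr}(\phi)\rangle_{\mathbb{H}^{-s}_{m,T},\mathbb{H}^{s}_{m,T}}$ is bounded on $\mathbb{X}^{s}_{m,T}$: by the duality definition of $\mathbb{H}^{-s}_{m,T}$ and Theorem \ref{tracethm}(ii),
\[
|\kappa_{s}\langle g,\textup{Tr}(\phi)\rangle|\leq \kappa_{s}|g|_{\mathbb{H}^{-s}_{m,T}}|\textup{Tr}(\phi)|_{\mathbb{H}^{s}_{m,T}}\leq C|g|_{\mathbb{H}^{-s}_{m,T}}\|\phi\|_{\mathbb{X}^{s}_{m,T}}.
\]
Hence there exists a unique $v\in \mathbb{X}^{s}_{m,T}$ with $a(v,\phi)=\kappa_{s}\langle g,\textup{Tr}(\phi)\rangle$ for every $\phi\in\mathbb{X}^{s}_{m,T}$.

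Next I would set $u:=\textup{Tr}(v)\in\mathbb{H}^{s}_{m,T}$ and show that $(-\Delta+m^{2})^{s}u=g$. The idea is to use test functions of the form $\phi=\textup{Ext}(\psi)$ for arbitrary $\psi\in\mathbb{H}^{s}_{m,T}$. For such $\phi$, properties (i)--(iv) of the extension listed right after the first theorem of Section 3 apply: $\phi$ satisfies the homogeneous equation $-\dive(y^{1-2s}\nabla\phi)+m^{2}y^{1-2s}\phi=0$ in $\mathcal{S}_{T}$ with $T$-periodic lateral trace, and $-\lim_{y\to 0}y^{1-2s}\partial_{y}\phi=\kappa_{s}(-\Delta+m^{2})^{s}\psi$ in $\mathbb{H}^{-s}_{m,T}$. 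A formal integration by parts in the half-cylinder then yields
\[
a(v,\textup{Ext}(\psi))=\int_{\partial^{0}\mathcal{S}_{T}} u(x)\,\bigl[-\lim_{y\to 0}y^{1-2s}\partial_{y}\phi(x,y)\bigr]\,dx=\kappa_{s}\langle (-\Delta+m^{2})^{s}u,\psi\rangle_{\mathbb{H}^{-s}_{m,T},\mathbb{H}^{s}_{m,T}}.
\]
Equating with $\kappa_{s}\langle g,\psi\rangle$ and using the arbitrariness of $\psi$ gives $(-\Delta+m^{2})^{s}u=g$ in $\mathbb{H}^{-s}_{m,T}$. To avoid the delicate boundary limit, I would perform this computation at the level of Fourier series: writing $v(x,y)=\sum_{k}d_{k}(y)e^{\imath\omega k\cdot x}/\sqrt{T^{N}}$ and $\textup{Ext}(\psi)(x,y)=\sum_{k}b_{k}\theta_{k}(y)e^{\imath\omega k\cdot x}/\sqrt{T^{N}}$, Parseval reduces $a(v,\textup{Ext}(\psi))$ to a sum of one-dimensional integrals in $y$ that can be computed via the ODE satisfied by $\theta_{k}$ and an integration by parts in $y$, producing exactly $\kappa_{s}\sum_{k}(\omega^{2}|k|^{2}+m^{2})^{s}d_{k}(0)\overline{b_{k}}=\kappa_{s}\langle (-\Delta+m^{2})^{s}u,\psi\rangle$.

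Uniqueness of $u$ (at the level of the original equation) is immediate: if $(-\Delta+m^{2})^{s}u_{1}=(-\Delta+m^{2})^{s}u_{2}$, then comparing Fourier coefficients and using $(\omega^{2}|k|^{2}+m^{2})^{s}>0$ for every $k$ gives $u_{1}=u_{2}$. Uniqueness of the extension $v$ already follows from Lax--Milgram. Existence of $u$ is automatic from the construction above.

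The main obstacle will be the integration-by-parts step linking $a(v,\textup{Ext}(\psi))$ with $\kappa_{s}\langle (-\Delta+m^{2})^{s}u,\psi\rangle$: the weight $y^{1-2s}$ is degenerate/singular at $y=0$, so boundary terms must be interpreted in the dual $\mathbb{H}^{-s}_{m,T}$ rather than pointwise. I expect this to be handled cleanly by moving to Fourier series and exploiting the explicit decay of $\theta_{k}(y)$ and $y^{1-2s}\theta'_{k}(y)$ as $y\to 0^{+}$ and $y\to\infty$ provided by the Bessel function representation recalled in Theorem \ref{tracethm}, mirroring the argument used in Lemma 2.2 of \cite{CapDavDupSir}.
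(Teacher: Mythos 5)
Your proposal is correct and matches the paper's intended approach: the paper simply states that the result follows by ``modifying the proof of Lemma 2.2 in \cite{CapDavDupSir}'', and that proof is precisely the Lax--Milgram argument on the extension space followed by the verification, at the level of Fourier series, that the boundary trace satisfies the fractional equation. Your Fourier-series route for the integration-by-parts step (using the ODE satisfied by $\theta_{k}$ and the known limit of $y^{1-2s}\theta'_{k}(y)$) is exactly the clean way to handle the degenerate weight at $y=0$.
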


Taking into account the previous results we can reformulate the nonlocal problem (\ref{P}) in a local way as explained below.

Let $g \in \mathbb{H}^{-s}_{m,T}$ and consider the following two problems:
\begin{equation}\label{P*}
\left\{
\begin{array}{ll}
(-\Delta_{x}+m^{2})^{s}u=g &\mbox{ in } (0,T)^{N}  \\
u(x+Te_{i})=u(x) & \mbox{ for } x\in \R^{N}
\end{array}
\right.
\end{equation}
and
\begin{equation}\label{P**}
\left\{
\begin{array}{ll}
-\dive(y^{1-2s} \nabla v)+m^{2}y^{1-2s}v =0 &\mbox{ in }\mathcal{S}_{T}  \\
v_{| {\{x_{i}=0\}}}= v_{| {\{x_{i}=T\}}} & \mbox{ on } \partial_{L}\mathcal{S}_{T} \\
\frac{\partial v}{\partial \nu^{1-2s}}=g(x)  &\mbox{ on } \partial^{0}\mathcal{S}_{T}
\end{array}.
\right.
\end{equation}

\begin{defn}
We say that $u\in \mathbb{H}^{s}_{m,T}$ is a weak solution to (\ref{P*}) if $u=\textup{Tr}(v)$
and $v$ is a weak solution to (\ref{P**}).
\end{defn}

\begin{remark}
Later, with abuse of notation, we will denote by $v(\cdot,0)$ the trace $\textup{Tr}(v)$ of a function $v\in \mathbb{X}^{s}_{m,T}$.
\end{remark}

We conclude this section giving the proof of the following sharp trace inequality:
\begin{thm}\label{thm6}
For any $v\in \mathbb{X}^{s}_{m,T}$ we have 
\begin{equation}\label{eqYY}
 \kappa_{s} |\textup{Tr}(v)|_{\mathbb{H}^{s}_{m,T}}^{2}\leq ||v||_{\mathbb{X}^{s}_{m,T}}^{2}
\end{equation}
and the equality is attained if and only if $v=\textup{Ext}(\textup{Tr}(v))$.
In particular
\begin{equation}\label{eqY}
||v||_{\mathbb{X}^{s}_{m,T}}^{2} - \kappa_{s} m^{2s} |\textup{Tr}(v)|_{L^{2}(0,T)^{N}}^{2} =0 \Leftrightarrow v(x,y)=C\, \theta(my) \mbox{ for some } C\in \R. 
\end{equation}
\end{thm}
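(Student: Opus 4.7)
The plan is to deduce Theorem~\ref{thm6} directly from the three key properties of the extension operator $\textup{Ext}$ collected above (items (i)--(iv)), combined with a trivial Fourier inequality.

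For the sharp inequality (\ref{eqYY}), given $v\in\mathbb{X}^{s}_{m,T}$ I would set $u=\textup{Tr}(v)$ and compare $v$ with its extension $w=\textup{Ext}(u)$. Property (ii) of $\textup{Ext}$ (the minimizing property among all elements of $\mathbb{X}^{s}_{m,T}$ with trace $u$) gives $\|w\|_{\mathbb{X}^{s}_{m,T}}^{2}\le \|v\|_{\mathbb{X}^{s}_{m,T}}^{2}$, and property (iii) computes
$\|w\|_{\mathbb{X}^{s}_{m,T}}^{2}=\kappa_{s}|u|_{\mathbb{H}^{s}_{m,T}}^{2}$; chaining these two gives (\ref{eqYY}).

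For the equality case, I would appeal to the uniqueness of the minimizer of (\ref{minpb}), which was used in the proof of the extension theorem and follows from the strict convexity of $\|\cdot\|_{\mathbb{X}^{s}_{m,T}}^{2}$. Uniqueness implies that equality in the chain above forces $v=w=\textup{Ext}(\textup{Tr}(v))$, and conversely when $v=\textup{Ext}(\textup{Tr}(v))$ property (iii) yields equality. Thus the equality in (\ref{eqYY}) is characterized exactly as claimed.

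For (\ref{eqY}), I would combine (\ref{eqYY}) with the pointwise lower bound $(\omega^{2}|k|^{2}+m^{2})^{s}\ge m^{2s}$, which, via Parseval's identity, yields
\begin{equation*}
|\textup{Tr}(v)|_{\mathbb{H}^{s}_{m,T}}^{2}=\sum_{k\in\Z^{N}}(\omega^{2}|k|^{2}+m^{2})^{s}|c_{k}|^{2}\ge m^{2s}\sum_{k\in\Z^{N}}|c_{k}|^{2}=m^{2s}|\textup{Tr}(v)|_{L^{2}(0,T)^{N}}^{2},
\end{equation*}
where $c_{k}$ are the Fourier coefficients of $\textup{Tr}(v)$. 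Combining this with (\ref{eqYY}) gives $\|v\|_{\mathbb{X}^{s}_{m,T}}^{2}\ge \kappa_{s}m^{2s}|\textup{Tr}(v)|_{L^{2}(0,T)^{N}}^{2}$. Equality holds in this chain iff equality holds in each link, i.e., simultaneously $v=\textup{Ext}(\textup{Tr}(v))$ and $(\omega^{2}|k|^{2}+m^{2})^{s}|c_{k}|^{2}=m^{2s}|c_{k}|^{2}$ for every $k$; since $N>0$ forces $\omega^{2}|k|^{2}+m^{2}>m^{2}$ as soon as $k\ne 0$, this second condition is equivalent to $c_{k}=0$ for all $k\ne 0$, i.e.\ $\textup{Tr}(v)\equiv C$ for some constant $C\in\R$. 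Plugging $u\equiv C$ into the explicit formula (\ref{extu}) for the extension, only the $k=0$ mode survives and one obtains $v(x,y)=C\,\theta_{0}(y)=C\,\theta(my)$, as desired. Conversely, for $v(x,y)=C\,\theta(my)$ a direct computation using the ODE satisfied by $\theta$ (integrating by parts in $y$, or applying (\ref{v=u}) to $u\equiv C$) shows $\|v\|_{\mathbb{X}^{s}_{m,T}}^{2}=\kappa_{s}m^{2s}C^{2}T^{N}=\kappa_{s}m^{2s}|\textup{Tr}(v)|_{L^{2}(0,T)^{N}}^{2}$, closing the equivalence.

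The only delicate point is the equality case of (\ref{eqYY}), which hinges on the strict convexity/uniqueness of the minimizer to (\ref{minpb}); everything else is bookkeeping with Parseval's identity and the explicit extension formula (\ref{extu}).
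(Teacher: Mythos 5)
Your proposal is correct and follows essentially the same route as the paper's proof: inequality (\ref{eqYY}) comes from chaining properties (ii) and (iii) of $\textup{Ext}$, the equality case from uniqueness of the minimizer of (\ref{minpb}), and (\ref{eqY}) from combining (\ref{eqYY}) with the pointwise Parseval bound $(\omega^{2}|k|^{2}+m^{2})^{s}\ge m^{2s}$, with equality forcing $v=\textup{Ext}(\textup{Tr}(v))$ and $c_{k}=0$ for all $k\neq 0$. The paper writes the equality case as a single circular chain of inequalities starting from the hypothesis and arriving back at the same quantity, whereas you first establish the two one-sided inequalities and then argue that overall equality forces equality in each link; these are logically the same argument.
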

\begin{proof}
By properties $(ii)$ and $(iii)$ we can see that for any $v\in \mathbb{X}^{s}_{m,T}$
\begin{equation*}
 \kappa_{s} |\textup{Tr}(v)|_{\mathbb{H}^{s}_{m,T}}^{2}=||\textup{Ext}\textup{Tr}(v)||_{\mathbb{X}^{s}_{m,T}}^{2}\leq ||v||_{\mathbb{X}^{s}_{m,T}}^{2},
\end{equation*}
and the equality holds if and only if $v=\textup{Ext}\textup{Tr}(v)$.

Now, we prove (\ref{eqY}).
We denote by $c_{k}$ the Fourier coefficients of $\textup{Tr}(v)$.

If $v(x,y)= C \theta_{m}(y) : = C \theta(my)$ for some $C\in \R$, being $\theta(0)=1$, we have 
\begin{align*}
||v||_{\mathbb{X}^{s}_{m,T}}^{2}&= C^{2} T^{N}\int_{0}^{+\infty} y^{1-2s} (|\theta'_{m}(y)|^{2}+m^{2} |\theta_{m}(y)|^{2})\, dy\\
&= C^{2} T^{N}m^{2s} \int_{0}^{+\infty} y^{1-2s} (|\theta'(y)|^{2}+m^{2} |\theta(y)|^{2})\, dy\\
&= C^{2} T^{N}m^{2s} \kappa_{s}\\
&= m^{2s} \kappa_{s} |\textup{Tr}(v)|_{L^{2}(0,T)^{N}}^{2}.
\end{align*} 
Now, we assume that 
$$
||v||_{\mathbb{X}^{s}_{m,T}}^{2} - \kappa_{s} m^{2s} |\textup{Tr}(v)|_{L^{2}(0,T)^{N}}^{2} =0.
$$
By using $(ii)$ and $(iii)$ we deduce
\begin{align}
||\textup{Ext}\textup{Tr}(v)||_{\mathbb{X}^{s}_{m,T}}^{2}&\leq 
||v||_{\mathbb{X}^{s}_{m,T}}^{2}= \kappa_{s} m^{2s} |\textup{Tr}(v)|_{L^{2}(0,T)^{N}}^{2} \nonumber \\
&\leq \kappa_{s} |\textup{Tr}(v)|_{\mathbb{H}^{s}_{m,T}}^{2}=||\textup{Ext}\textup{Tr}(v)||_{\mathbb{X}^{s}_{m,T}}^{2}
\end{align}
that is 
$$
||v||_{\mathbb{X}^{s}_{m,T}}^{2}=||\textup{Ext}\textup{Tr}(v)||_{\mathbb{X}^{s}_{m,T}}^{2}=\kappa_{s} m^{2s} |\textup{Tr}(v)|_{L^{2}(0,T)^{N}}^{2}. 
$$
Let us note that $||v||_{\mathbb{X}^{s}_{m,T}}^{2}=||\textup{Ext}\textup{Tr}(v)||_{\mathbb{X}^{s}_{m,T}}^{2}$ implies $v=\textup{Ext}(\textup{Tr}(v))$.

In particular, from
$$
\kappa_{s}m^{2s} |\textup{Tr}(v)|_{L^{2}(0,T)^{N}}^{2}=||\textup{Ext}\textup{Tr}(v)||_{\mathbb{X}^{s}_{m,T}}^{2}=\kappa_{s} |\textup{Tr}(v)|_{\mathbb{H}^{s}_{m,T}}^{2}
$$
we obtain that $c_{k}=0$  for any $k\neq 0$, so we get 
$$
v=\textup{Ext}(\textup{Tr}(v))=\sum_{k\in \Z^{N}} c_{k}\theta(\sqrt{\omega^{2} |k|^{2}+ m^{2}} y) e^{\imath k \cdot x}=c_{0} \theta(my).
$$

\end{proof}

\section{Periodic solutions in the cylinder $\mathcal{S}_{T}$}

In this section we prove the existence of a solution to $(\ref{P})$. As shown in previous section, we know that the study of $(\ref{P})$ is equivalent to investigate the existence of weak solutions to
\begin{equation}
\left\{
\begin{array}{ll}
-\dive(y^{1-2s} \nabla v)+m^{2}y^{1-2s}v =0 &\mbox{ in }\mathcal{S}_{T}:=(0,T)^{N} \times (0,\infty)  \\
v_{| {\{x_{i}=0\}}}= v_{| {\{x_{i}=T\}}} & \mbox{ on } \partial_{L}\mathcal{S}_{T}:=\partial (0,T)^{N} \times [0,\infty) \\
\frac{\partial v}{\partial \nu^{1-2s}}=\kappa_{s} [m^{2s}v+f(x,v)]   &\mbox{ on }\partial^{0}\mathcal{S}_{T}:=(0,T)^{N} \times \{0\}
\end{array}.
\right.
\end{equation}
For simplicity, we will assume that $\kappa_{s}=1$.

Then, we will look for the critical points of
$$
\mathcal{J}_{m}(v)=\frac{1}{2}||v||^{2}_{\mathbb{X}^{s}_{m,T}} -\frac{ m^{2s}}{2}|v(\cdot,0)|_{L^{2}(0,T)^{N}}^{2}- \int_{\partial^{0}\mathcal{S}_{T}} F(x,v) dx
$$
defined for $v\in\mathbb{X}^{s}_{m,T}$.

More precisely, we will prove that $\mathcal{J}_{m}$ satisfies the assumptions of the Linking Theorem \cite{Rab}:
\begin{thm}\label{Linking Thm}
Let $(X, ||\cdot||)$ be a real Banach space with $X=Y\bigoplus Z$, where $Y$ is finite dimensional. Let $J\in \mathcal{C}^{1}(X,\R)$ be a functional satisfying the following conditions:
\begin{itemize}
\item $J$ satisfies the Palais-Smale condition,
\item There exist $\eta, \rho>0$ such that 
$$
\inf \{J(v): v\in Z \mbox{ and } ||v||=\eta\} \geq \rho,
$$
\item There exist $z\in \partial B_{1} \cap Z$, $R>\eta$ and $R'>0$ such that
$$
 J\leq 0 \mbox{ on } \partial \mathcal{A}
$$
where 
$$
\mathcal{A}=\{v=y+r z: y\in Y, ||y||\leq R' \mbox{ and } 0\leq r \leq R\}
$$
and
$$
\partial \mathcal{A}=\{v=y+r z: y\in Y, ||y||= R' \mbox{ or }  r\in \{0, R\} \}.
$$
\end{itemize}
Then $J$ possesses a critical value $c\geq \rho$ which can be characterized as
$$
c:=\inf_{\gamma \in \Gamma} \max_{v\in \mathcal{A}} J(\gamma(v))
$$
where 
$$
\Gamma:=\{\gamma \in \mathcal{C}(\mathcal{A}, X): \gamma=Id \mbox{ on } \partial \mathcal{A}\}.
$$
\end{thm}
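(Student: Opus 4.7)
My plan is to prove this classical result by the standard combination of a topological intersection (linking) argument and a deformation argument, following Rabinowitz. Note first that $\Gamma$ is nonempty because the identity map $\mathrm{Id}_{\mathcal{A}} : \mathcal{A} \to X$ belongs to $\Gamma$, so $c$ is well-defined and bounded above by $\max_{\mathcal{A}} J$.

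The first key step is the \emph{intersection property}: for every $\gamma \in \Gamma$, the image $\gamma(\mathcal{A})$ meets the sphere $S := \{v \in Z : \|v\| = \eta\}$. Granting this, the hypothesis $\inf_{S} J \geq \rho$ immediately yields $\max_{\mathcal{A}} J \circ \gamma \geq \rho$ for every admissible $\gamma$, hence $c \geq \rho > 0$. To establish the intersection, let $P : X = Y \oplus Z \to Y$ denote the continuous projection, write $Q := I - P$, and consider the continuous map
$$
\Phi : \mathcal{A} \to Y \oplus \R, \qquad \Phi(v) := \bigl( P\gamma(v), \, \|Q\gamma(v)\| - \eta \bigr),
$$
so that a zero of $\Phi$ corresponds to a point of $\gamma(\mathcal{A}) \cap S$. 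Since $\gamma = \mathrm{Id}$ on $\partial \mathcal{A}$, the restriction $\Phi|_{\partial \mathcal{A}}$ coincides with the analogous map built from the identity, and a direct inspection (using $\|y\| = R'$, or $r = 0$, or $r = R > \eta$ on the three faces of $\partial \mathcal{A}$) shows that $\Phi|_{\partial \mathcal{A}}$ avoids $0$. Since $\mathcal{A}$ lies in the finite-dimensional space $Y \oplus \R z$, an explicit computation of the Brouwer degree in the identity case gives $\pm 1$, and homotopy invariance yields $\deg(\Phi, \mathcal{A}, 0) \neq 0$, so $\Phi$ has a zero in $\mathcal{A}$.

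The second step is to show that $c$ is a critical value, arguing by contradiction. Assume no critical point at level $c$. By the Palais--Smale condition and the quantitative deformation lemma (built from a pseudo-gradient vector field for $J$), for any sufficiently small $\varepsilon \in (0, \rho/2)$ there exists $h \in \mathcal{C}([0,1] \times X, X)$ such that $h(1, \cdot)$ maps $\{J \leq c + \varepsilon\}$ into $\{J \leq c - \varepsilon\}$, and $h(t, v) = v$ whenever $J(v) \leq 0$ (which covers $\partial \mathcal{A}$, since $0 < c - 2\varepsilon$). Pick $\gamma_{0} \in \Gamma$ with $\max_{\mathcal{A}} J \circ \gamma_{0} \leq c + \varepsilon$ and set $\tilde{\gamma} := h(1, \gamma_{0}(\cdot))$. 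Because the deformation fixes $\gamma_{0}$ on $\partial \mathcal{A}$, we have $\tilde{\gamma} \in \Gamma$; but then $\max_{\mathcal{A}} J \circ \tilde{\gamma} \leq c - \varepsilon < c$, contradicting the definition of $c$ as the infimum over $\Gamma$.

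The main obstacle is the Brouwer-degree calculation in the intersection step: one must identify the correct finite-dimensional model (the cylinder $\mathcal{A} \subset Y \oplus \R z$) and verify that $\Phi|_{\partial \mathcal{A}}$ is homotopic, through non-vanishing maps, to an explicit model map whose degree can be computed by hand. Once this topological linking is in place the deformation part is routine, provided we have the $\mathcal{C}^{1}$ regularity of $J$ together with a pseudo-gradient vector field to drive the deformation lemma.
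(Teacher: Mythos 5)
The paper does not give a proof of this theorem; it is stated as a quotation of the Linking Theorem from Rabinowitz's lecture notes \cite{Rab}, which is precisely the source your argument reproduces (intersection property via a finite-dimensional Brouwer degree computation on $\mathcal{A}\subset Y\oplus\R z$, then a Palais--Smale/deformation-lemma contradiction). Your proof is correct and coincides with the standard argument in the cited reference.
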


By using the assumptions on $f$, it is easy to prove that  $\mathcal{J}_{m}$ is well defined on $\mathbb{X}^{s}_{m,T}$ and $\mathcal{J}_{m}\in \mathcal{C}^{1}(\mathbb{X}^{s}_{m,T},\R)$. Moreover, by (\ref{eqYY}) we notice that the quadratic part of $\mathcal{J}_{m}$ is nonnegative, that is
\begin{equation}\label{partequad}
||v||^{2}_{\mathbb{X}^{s}_{m,T}} -m^{2s}|v(\cdot,0)|_{L^{2}(0,T)^{N}}^{2}\geq 0. 
\end{equation}

Let us note that
$$
\mathbb{X}^{s}_{m,T}=<\theta(my)> \oplus \Bigl\{ v\in \mathbb{X}^{s}_{m,T}: \int_{(0,T)^{N}} v(x,0) dx=0\Bigr \}=:\mathbb{Y}^{s}_{m,T} \oplus \mathbb{Z}^{s}_{m,T}
$$
where $\dim \mathbb{Y}^{s}_{m,T}<\infty$ and $\mathbb{Z}^{s}_{m,T}$ is the orthogonal complement of $\mathbb{Y}^{s}_{m,T}$ with respect to the inner product in $\mathbb{X}^{s}_{m,T}$.
In order to prove that $\mathcal{J}_{m}$ verifies the Linking hypotheses we prove the following Lemmas
\begin{lem}\label{Linking1}
$\mathcal{J}_{m}\leq 0 \mbox{ on } \mathbb{Y}^{s}_{m,T}$.
\end{lem}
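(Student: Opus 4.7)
The plan is to exploit the explicit structure of $\mathbb{Y}^s_{m,T}=\langle \theta(my)\rangle$, which is one-dimensional: every $v\in\mathbb{Y}^s_{m,T}$ has the form $v(x,y)=C\,\theta(my)$ for some $C\in\R$. In particular, since $\theta(0)=1$, the trace is the constant function $v(x,0)=C$ on $(0,T)^N$.

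First I would handle the quadratic part of $\mathcal{J}_m$. This is where Theorem \ref{thm6} (the sharp trace inequality), specifically the equality characterisation in \eqref{eqY}, is the key input: taking $\kappa_s=1$ as in the section's normalisation, functions of the form $C\,\theta(my)$ are precisely those for which
\[
\|v\|_{\mathbb{X}^s_{m,T}}^{2}-m^{2s}|v(\cdot,0)|_{L^{2}(0,T)^{N}}^{2}=0.
\]
Hence on $\mathbb{Y}^s_{m,T}$ the quadratic part of $\mathcal{J}_m$ vanishes identically, and
\[
\mathcal{J}_{m}(v)=-\int_{\partial^{0}\mathcal{S}_{T}} F(x,v(x,0))\,dx=-\int_{(0,T)^{N}} F(x,C)\,dx.
\]

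Second, I would dispose of the nonlinear term using assumption $(f6)$. Since $tf(x,t)\geq 0$ for every $x\in\R^N$ and $t\in\R$, the function $\tau\mapsto f(x,\tau)$ is nonnegative on $[0,+\infty)$ and nonpositive on $(-\infty,0]$. Integrating from $0$ to $t$ (splitting into the cases $t\geq 0$ and $t<0$) shows that $F(x,t)\geq 0$ for every $x$ and $t$. Applied to the constant value $C$ this gives
\[
\int_{(0,T)^{N}} F(x,C)\,dx\geq 0,
\]
and therefore $\mathcal{J}_m(v)\leq 0$ for every $v\in\mathbb{Y}^s_{m,T}$, as claimed.

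There is essentially no obstacle here: the whole content of the lemma is packaged in the equality case of the trace inequality established in Theorem \ref{thm6} together with the sign condition $(f6)$. The only thing worth double-checking carefully is that the normalisation $\kappa_s=1$ announced at the start of Section 4 is indeed in force, so that the coefficients in $\mathcal{J}_m$ match \eqref{eqY} exactly; once that is noted, the proof reduces to a two-line computation.
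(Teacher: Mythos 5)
Your proof is correct and is essentially the same argument the paper uses: the paper's proof is a one-liner citing precisely \eqref{eqY} and $(f6)$, which are exactly the two ingredients you identify. You simply spell out the details (that $\mathbb{Y}^s_{m,T}$ consists of multiples of $\theta(my)$, that the quadratic part vanishes there by the equality case of the trace inequality, and that $(f6)$ forces $F\geq 0$) that the paper leaves implicit.
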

\begin{proof}
It follows directly by (\ref{eqY}) and by assumption $(f6)$.

\end{proof}

\begin{lem}
There exist $\rho>0$ and $\eta>0$ such that
$\mathcal{J}_{m}(v)\geq \rho \mbox{ for } v\in \mathbb{Z}^{s}_{m,T}: ||v||_{\mathbb{X}^{s}_{m,T}}=\eta$.
\end{lem}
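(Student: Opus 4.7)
The plan is to split the quadratic part of $\mathcal{J}_m$ into a coercive piece (using the spectral gap on $\mathbb{Z}^s_{m,T}$) and then absorb the nonlinear term for small $\|v\|_{\mathbb{X}^s_{m,T}}$ by combining Lemma~\ref{lfF} with the continuous embedding in Theorem~\ref{thm2}.

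First I would exploit the definition of $\mathbb{Z}^s_{m,T}$. If $v\in\mathbb{Z}^s_{m,T}$ has trace $u=\sum_{k\in\Z^N} c_k e^{\imath\omega k\cdot x}/\sqrt{T^N}$, then $c_0=0$, so
\[
|v(\cdot,0)|_{L^2(0,T)^N}^{2}=\sum_{k\neq 0}|c_k|^{2}\leq \frac{1}{(\omega^{2}+m^{2})^{s}}\sum_{k\neq 0}(\omega^{2}|k|^{2}+m^{2})^{s}|c_k|^{2}=\frac{|v(\cdot,0)|_{\mathbb{H}^{s}_{m,T}}^{2}}{(\omega^{2}+m^{2})^{s}}.
\]
Combining this with the sharp trace inequality (\ref{eqYY}) (recall $\kappa_s=1$) gives
\[
\|v\|_{\mathbb{X}^{s}_{m,T}}^{2}-m^{2s}|v(\cdot,0)|_{L^{2}(0,T)^{N}}^{2}\geq \left(1-\frac{m^{2s}}{(\omega^{2}+m^{2})^{s}}\right)\|v\|_{\mathbb{X}^{s}_{m,T}}^{2}=:\delta_{m}\|v\|_{\mathbb{X}^{s}_{m,T}}^{2},
\]
and $\delta_m>0$ since $\omega^2+m^2>m^2$. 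This is the crucial coercivity gain on $\mathbb{Z}^s_{m,T}$.

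Next I would estimate the nonlinear term. Lemma~\ref{lfF} (choosing some fixed $\varepsilon>0$ to be fixed later) provides the pointwise bound $|F(x,t)|\leq \varepsilon |t|^{2}+C_\varepsilon |t|^{p+1}$, hence
\[
\int_{\partial^{0}\mathcal{S}_{T}} F(x,v(\cdot,0))\,dx\leq \varepsilon\, |v(\cdot,0)|_{L^{2}(0,T)^{N}}^{2}+C_\varepsilon\, |v(\cdot,0)|_{L^{p+1}(0,T)^{N}}^{p+1}.
\]
Since $p+1<2^{\s}_s$, Theorem~\ref{thm2} (via the trace operator of Theorem~\ref{tracethm}) yields constants $C_1,C_2>0$ with $|v(\cdot,0)|_{L^{2}(0,T)^{N}}^{2}\leq C_1\|v\|_{\mathbb{X}^{s}_{m,T}}^{2}$ and $|v(\cdot,0)|_{L^{p+1}(0,T)^{N}}^{p+1}\leq C_2\|v\|_{\mathbb{X}^{s}_{m,T}}^{p+1}$.

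Putting all three estimates together,
\[
\mathcal{J}_{m}(v)\geq \left(\frac{\delta_{m}}{2}-\varepsilon C_1\right)\|v\|_{\mathbb{X}^{s}_{m,T}}^{2}-C_\varepsilon C_2\,\|v\|_{\mathbb{X}^{s}_{m,T}}^{p+1}.
\]
Choosing $\varepsilon=\delta_m/(4C_1)$ makes the quadratic coefficient equal to $\delta_m/4>0$, so
\[
\mathcal{J}_{m}(v)\geq \frac{\delta_{m}}{4}\|v\|_{\mathbb{X}^{s}_{m,T}}^{2}-K\|v\|_{\mathbb{X}^{s}_{m,T}}^{p+1},
\]
for some constant $K>0$. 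Since $p+1>2$, taking $\eta>0$ small enough gives $\mathcal{J}_m(v)\geq \rho:=\frac{\delta_m}{4}\eta^{2}-K\eta^{p+1}>0$ for every $v\in\mathbb{Z}^{s}_{m,T}$ with $\|v\|_{\mathbb{X}^{s}_{m,T}}=\eta$, as desired.

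The main obstacle is the first step: without the orthogonality $c_0=0$ that defines $\mathbb{Z}^{s}_{m,T}$, the indefinite part $-\tfrac{m^{2s}}{2}|v(\cdot,0)|_{L^{2}}^{2}$ can saturate the trace inequality (equality case of Theorem~\ref{thm6}) and destroy coercivity. Identifying $\mathbb{Y}^{s}_{m,T}=\langle \theta(my)\rangle$ as exactly the equality subspace and reducing the problem on $\mathbb{Z}^{s}_{m,T}$ to a spectral gap argument is the heart of the proof; the rest is a standard mountain-pass-type computation.
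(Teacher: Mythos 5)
Your proof is correct, and it takes a genuinely different (and arguably cleaner) route to the key inequality $\|v\|_{\mathbb{X}^{s}_{m,T}}^{2}-m^{2s}|v(\cdot,0)|_{L^{2}(0,T)^{N}}^{2}\geq C\|v\|_{\mathbb{X}^{s}_{m,T}}^{2}$ on $\mathbb{Z}^{s}_{m,T}$. The paper establishes this coercivity by contradiction: it assumes a sequence $(v_j)\subset\mathbb{Z}^{s}_{m,T}$ nearly saturating the inequality, normalizes, passes to a weak limit using the compact embedding of Theorem~\ref{thm2}, and invokes the equality case of Theorem~\ref{thm6} (equation~(\ref{eqY})) to force the limit into $\langle\theta(my)\rangle\cap\mathbb{Z}^{s}_{m,T}=\{0\}$, contradicting the nondegeneracy of the $L^{2}$-trace. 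You instead observe directly that $c_0=0$ on $\mathbb{Z}^{s}_{m,T}$ forces every nonzero Fourier mode to satisfy $(\omega^2|k|^2+m^2)^s\geq(\omega^2+m^2)^s$, so the spectral gap $\omega^2>0$ gives the explicit constant $\delta_m=1-m^{2s}/(\omega^2+m^2)^s>0$ once combined with the sharp trace inequality~(\ref{eqYY}). The advantage of your approach is that it is elementary (no weak compactness, no contradiction), it makes the constant explicit, and it makes transparent why the decomposition $\mathbb{Y}^{s}_{m,T}\oplus\mathbb{Z}^{s}_{m,T}$ works: $\mathbb{Y}^{s}_{m,T}$ is exactly the kernel of the quadratic form, and $\mathbb{Z}^{s}_{m,T}$ avoids it by a quantified frequency gap. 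The paper's compactness argument is a bit more robust (it does not need the explicit Fourier description), but here the Fourier structure is available and your route is more direct. The remainder of your proof (absorbing the nonlinear term via Lemma~\ref{lfF} and the continuous embedding of Theorem~\ref{thm2}, then choosing $\varepsilon$ and $\eta$ small) matches the paper's argument essentially line by line.
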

\begin{proof}
Firstly we show that there exists a constant $C>0$ such that
\begin{align}\label{eqnorm}
||v||_{\mathbb{X}^{s}_{m,T}}^{2}-m^{2s}|v(\cdot,0)|_{L^{2}(0,T)^{N}}^{2}\geq C||v||_{\mathbb{X}^{s}_{m,T}}^{2}
\end{align}
for any $v\in \mathbb{Z}^{s}_{m,T}$.
Assume by contradiction that there exists a sequence $(v_{j})\subset \mathbb{Z}^{s}_{m,T}$ such that
$$
||v_{j}||_{\mathbb{X}^{s}_{m,T}}^{2}-m^{2s}|v_{j}(\cdot,0)|_{L^{2}(0,T)^{N}}^{2}< \frac{1}{j} ||v_{j}||_{\mathbb{X}^{s}_{m,T}}^{2}
$$
Let $z_{j}=v_{j}/||v_{j}||_{\mathbb{X}^{s}_{m,T}}$. Then $||z_{j}||_{\mathbb{X}^{s}_{m,T}}=1$ so we can assume that $z_{j}\rightharpoonup z$ in $\mathbb{X}^{s}_{m,T}$ and $z_{j}(\cdot,0) \rightarrow z(\cdot,0)$ in $L^{2}(0,T)^{N}$ for some $z\in \mathbb{Z}^{s}_{m,T}$ ($\mathbb{Z}^{s}_{m,T}$ is weakly closed).

Hence, for any $j\in \N$
$$
1-m^{2s}|z_{j}(\cdot,0)|_{L^{2}(0,T)^{N}}^{2}<\frac{1}{j}
$$
so  we get $|z_{j}(\cdot,0)|^{2}_{L^{2}(0,T)^{N}}\rightarrow \frac{1}{m^{2s}}$ that is $|z(\cdot,0)|_{L^{2}(0,T)^{N}}=\frac{1}{m^{s}}$.

On the other hand
\begin{align*}
0&\leq ||z||_{\mathbb{X}^{s}_{m,T}}^{2}-m^{2s}|z(\cdot,0)|_{L^{2}(0,T)^{N}}^{2}\\
&\leq \liminf_{j\rightarrow \infty}||z_{j}||_{\mathbb{X}^{s}_{m,T}}^{2}-m^{2s}|z_{j}(\cdot,0)|_{L^{2}(0,T)^{N}}^{2}=0
\end{align*}
implies that $z=c\, \theta(my)$ by (\ref{eqY}). But $z\in \mathbb{Z}^{s}_{m,T}$ so $c=0$ and this is a contradiction because of $|z(\cdot,0)|_{L^{2}(0,T)^{N}}=\frac{1}{m^{s}}>0$.

Taking into account $(\ref{eqnorm})$, $(\ref{F})$ and Theorem $4$ we have
\begin{align*}
\mathcal{J}_{m}(v)&\geq C||v||_{\mathbb{H}^{s}_{m,T}}^{2}-\varepsilon|v(\cdot,0)|_{L^{2}(0,T)^{N}}^{2}-C_{\varepsilon} |v(\cdot,0)|_{L^{p+1}(0,T)^{N}}^{p+1} \\
&\geq \Bigl(C-\frac{\varepsilon}{m}\Bigr)||v||_{\mathbb{X}^{s}_{m,T}}^{2}-C||v||_{\mathbb{X}^{s}_{m,T}}^{p+1}
\end{align*}
for any $v\in \mathbb{Z}^{s}_{m,T}$.
Choosing $\varepsilon \in (0,mC)$, we can find $\rho>0$ and $\eta>0$ such that
$$
\inf \{\mathcal{J}_{m}(v):  v\in \mathbb{Z}^{s}_{m,T} \mbox{ and } ||v||_{\mathbb{X}^{s}_{m,T}}=\eta  \}\geq \rho.
$$

\end{proof}

\begin{lem}\label{lemma5}
There exist $R>\rho$, $R'>0$ and $z\in \mathbb{Z}^{s}_{m,T}$ such that
$$
\max_{\partial \mathbb{A}^{s}_{m,T}} \mathcal{J}_{m}(v) \leq 0 \quad \mbox{ and } \quad \max_{ \mathbb{A}^{s}_{m,T}} \mathcal{J}_{m}(v) <\infty
$$
where
$$
\mathbb{A}^{s}_{m,T}=\{ v=y+r z: ||y||_{\mathbb{X}^{s}_{m,T}} \leq R' \mbox{ and }  r\in [0, R] \}. 
$$
\end{lem}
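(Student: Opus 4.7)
The plan is to exploit the one-dimensionality of $\mathbb{Y}^{s}_{m,T}=\langle \theta(my)\rangle$ and the sharp equality case in Theorem \ref{thm6} to reduce the problem to a finite-dimensional estimate, where the superquadratic growth of $F$ (Lemma \ref{F**}) overwhelms the quadratic part of $\mathcal{J}_{m}$. First, I would fix any $z\in \mathbb{Z}^{s}_{m,T}$ with $\|z\|_{\mathbb{X}^{s}_{m,T}}=1$ and whose trace $z(\cdot,0)$ is nontrivial, for instance taking $z=\textup{Ext}(w)/\|\textup{Ext}(w)\|_{\mathbb{X}^{s}_{m,T}}$ for $w(x)=\cos(\omega x_{1})$.

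For $v=y+rz \in \mathbb{A}^{s}_{m,T}$ with $y=c_{0}\theta(my)\in \mathbb{Y}^{s}_{m,T}$, I would use two orthogonality facts. First, $y\perp z$ in $\mathbb{X}^{s}_{m,T}$ gives $\|v\|_{\mathbb{X}^{s}_{m,T}}^{2}=\|y\|_{\mathbb{X}^{s}_{m,T}}^{2}+r^{2}$. Second, because $y(\cdot,0)\equiv c_{0}$ is constant while $z(\cdot,0)$ has zero mean, $y(\cdot,0)$ and $z(\cdot,0)$ are orthogonal in $L^{2}(0,T)^{N}$, so $|v(\cdot,0)|_{L^{2}}^{2}=|y(\cdot,0)|_{L^{2}}^{2}+r^{2}|z(\cdot,0)|_{L^{2}}^{2}$. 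Combining these with the equality case of (\ref{eqY}) for $y$, namely $\|y\|_{\mathbb{X}^{s}_{m,T}}^{2}=m^{2s}|y(\cdot,0)|_{L^{2}}^{2}$, the quadratic part of $\mathcal{J}_{m}$ collapses to
\begin{equation*}
\|v\|_{\mathbb{X}^{s}_{m,T}}^{2}-m^{2s}|v(\cdot,0)|_{L^{2}}^{2}=r^{2}\bigl(1-m^{2s}|z(\cdot,0)|_{L^{2}}^{2}\bigr)\leq r^{2}.
\end{equation*}

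Next, by Lemma \ref{F**} we have $F(x,v)\geq a_{3}|v(\cdot,0)|^{\mu}-a_{4}$, and since the two-dimensional space $\textup{span}\{1,z(\cdot,0)\}\subset L^{\mu}(0,T)^{N}$ carries equivalent norms, there exists $C>0$ with
\begin{equation*}
\int_{(0,T)^{N}}|c_{0}+rz(x,0)|^{\mu}\,dx \;\geq\; C\bigl(|c_{0}|^{\mu}+|r|^{\mu}\bigr) \;\geq\; C'\bigl(\|y\|_{\mathbb{X}^{s}_{m,T}}^{\mu}+r^{\mu}\bigr),
\end{equation*}
using $\|y\|_{\mathbb{X}^{s}_{m,T}}\sim |c_{0}|$. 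Putting everything together yields
\begin{equation*}
\mathcal{J}_{m}(v)\leq \tfrac{1}{2}r^{2}-a_{3}C'\bigl(\|y\|_{\mathbb{X}^{s}_{m,T}}^{\mu}+r^{\mu}\bigr)+a_{4}T^{N}.
\end{equation*}

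Because $\mu>2$, this estimate lets me choose the parameters. On the face $r=0$ we already have $\mathcal{J}_{m}\leq 0$ by Lemma \ref{Linking1}. On $\{r=R,\ \|y\|\leq R'\}$, I pick $R$ so large that $\tfrac{1}{2}R^{2}-a_{3}C'R^{\mu}+a_{4}T^{N}\leq 0$. On $\{\|y\|=R',\ r\in[0,R]\}$, with $R$ now fixed, I pick $R'$ so large that $\tfrac{1}{2}R^{2}-a_{3}C'(R')^{\mu}+a_{4}T^{N}\leq 0$, and also $R'>\rho$ so the geometry is nontrivial. The uniform bound $\max_{\mathbb{A}^{s}_{m,T}}\mathcal{J}_{m}<\infty$ follows because $\mathbb{A}^{s}_{m,T}$ is bounded in $\mathbb{X}^{s}_{m,T}$ (by $R'+R$) and the trace embedding into $L^{p+1}$ (Theorem \ref{thm2}) together with growth bound (\ref{F}) gives a finite upper bound. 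The main delicate point I expect is justifying the lower bound on $\int|c_{0}+rz(x,0)|^{\mu}\,dx$ by finite-dimensional equivalence of norms, which requires $\{1,z(\cdot,0)\}$ to be linearly independent in $L^{\mu}$; this is ensured by the explicit choice of $z$ above.
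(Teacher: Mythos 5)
Your proof is correct and takes essentially the same approach as the paper: decompose $v=y+rz$ with a fixed unit $z\in\mathbb{Z}^{s}_{m,T}$ of nontrivial zero-mean trace, use the sharp equality in Theorem~\ref{thm6} together with orthogonality in $\mathbb{X}^{s}_{m,T}$ and in $L^{2}$ to reduce the quadratic part to at most $\tfrac12 r^{2}$, and let the superquadratic bound of Lemma~\ref{F**} (via equivalence of norms on $\mathrm{span}\{1,z(\cdot,0)\}$) dominate. The only cosmetic deviation is your choice $z=\textup{Ext}(\cos(\omega x_{1}))/\|\textup{Ext}(\cos(\omega x_{1}))\|_{\mathbb{X}^{s}_{m,T}}$ versus the paper's explicit $w(x,y)=\prod_{i}\sin(\omega x_{i})/(y+1)$; this is immaterial for the present lemma, but the paper's choice yields the $m$-uniform estimate (\ref{einstein}) on $\|w\|_{\mathbb{X}^{s}_{m,T}}$, which is reused in Section~6 when sending $m\to 0$.
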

\begin{proof}
By using Lemma \ref{Linking1} we know that $\mathcal{J}_{m}\leq 0$ on $\mathbb{Y}^{s}_{m,T}$. 

Let us consider
$$
w=\prod_{i=1}^{N} \sin(\omega x_{i}) \frac{1}{y+1}.
$$
We note that $w\in \mathbb{Z}^{s}_{m,T}$ (since $\int_{0}^{T} \sin(\omega x) dx=0$) and 
\begin{align*}
||w||^{2}_{\mathbb{X}^{s}_{m,T}}&=N\Bigl(\prod_{i=1}^{N-1} \int_{0}^{T} \sin^{2}(\omega x) dx\Bigr)
\omega^{2} \Bigl( \int_{0}^{T} \cos^{2}(\omega x) dx\Bigr)\Bigl(\int_{0}^{\infty} y^{1-2s} \frac{dy}{(y+1)^{2}} \Bigr) \\
&+\Bigl(\prod_{i=1}^{N} \int_{0}^{T} \sin^{2}(\omega x) dx \Bigr) \Bigl(\int_{0}^{\infty} y^{1-2s} \frac{dy}{(y+1)^{4}} \Bigr)\\
&+m^{2}\Bigl(\prod_{i=1}^{N} \int_{0}^{T} \sin^{2}(\omega x) dx \Bigr) \Bigl(\int_{0}^{\infty} y^{1-2s} \frac{dy}{(y+1)^{2}} \Bigr).
\end{align*}
So there exist $C_{1}, C_{2}, C_{3}>0$ (independent of $m$) such that 
\begin{equation}\label{einstein}
C_{1}\leq ||w||^{2}_{\mathbb{X}^{s}_{m,T}}\leq C_{2}+m^{2}C_{3}.
\end{equation}
We set  $z=\frac{w}{||w||_{\mathbb{X}^{s}_{m,T}}}$.
It is clear that $z\in \mathbb{Z}^{s}_{m,T}$ and $||z||_{\mathbb{X}^{s}_{m,T}}=1$.

By H\"older inequality, we can observe that if $v=y+rz\in \mathbb{Y}^{s}_{m,T}\oplus \R_{+}z$
\begin{align*}
|v(\cdot,0)|_{L^{\mu}(0,T)^{N}}^{\mu} &\geq C|v(\cdot,0)|_{L^{2}(0,T)^{N}}^{\mu} \\
&=C\Bigl(\int_{(0,T)^{N}} (c+rz)^{2} dx \Bigr)^{\frac{\mu}{2}} \\
& \geq  C'(m^{2s}c^{2}T^{N}+r^{2})^{\frac{\mu}{2}}. 
\end{align*}

Then, for any $v=y+r z\in \mathbb{Y}^{s}_{m,T}\oplus \R_{+} z$
\begin{align}
\mathcal{J}_{m}(v)&=\frac{1}{2}||z||_{\mathbb{X}^{s}_{m,T}}^{2}-\frac{m^{2s}}{2}|z(\cdot,0)|_{L^{2}(0,T)^{N}}^{2}-\int_{\partial^{0}\mathcal{S}_{T}} F(x,v) dx \nonumber \\
&\leq \frac{r^{2}}{2}-A|v(\cdot,0)|_{L^{\mu}(0,T)^{N}}^{\mu}+BT^{N} \nonumber \\
&\leq \frac{r^{2}}{2}-C''(m^{2s}c^{2}T^{N}+r^{2})^{\frac{\mu}{2}} +BT^{N} \label{4.12} \\
&\leq (m^{2s}c^{2}T^{N}+r^{2})-C''(m^{2s}c^{2}T^{N}+r^{2})^{\frac{\mu}{2}} +BT^{N} \label{4.13}\\
&=||v||_{\mathbb{X}^{s}_{m,T}}^{2}-E||v||_{\mathbb{X}^{s}_{m,T}}^{\mu}+F. \label{4.14}
\end{align}

We recall that $\mu>2$.
By using (\ref{4.12}), there exists $R>0$ such that 
$$
\mathcal{J}_{m}(y+rz) \leq 0 \mbox{ for any } r \geq  R \mbox{ and } y \in \mathbb{Y}^{s}_{m,T}.
$$
Let $r\in [0,R]$. By (\ref{4.13}), we can find $R'>0$ such that $\mathcal{J}_{m}(y+rz) \leq 0$ for $||y||_{\mathbb{X}^{s}_{m,T}} \geq R'$. By (\ref{4.14}), we deduce that there exists a constant $\delta>0$ such that $\mathcal{J}_{m}(v)\leq \delta$ for any $v\in \mathbb{A}^{s}_{m,T}$.

\end{proof}

Finally we show that $\mathcal{J}_{m}$ satisfies the Palais-Smale condition:

\begin{lem}\label{Linking4}
Let $c \in \R$. Let $(v_{j})\subset \mathbb{X}^{s}_{m,T}$ be a sequence such that 
\begin{equation}\label{PS}
\mathcal{J}_{m}(v_{j}) \rightarrow c \mbox{ and }  \mathcal{J}_{m}'(v_{j}) \rightarrow 0.
\end{equation}
Then there exist a subsequence $(v_{j_{h}})\subset (v_{j})$ and $v \in \mathbb{X}^{s}_{m,T}$ such that $v_{j_{h}} \rightarrow v$ in $\mathbb{X}^{s}_{m,T}$.
\end{lem}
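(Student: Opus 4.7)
\medskip

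\noindent
\textbf{Plan of proof.} The strategy is the standard Ambrosetti--Rabinowitz scheme, but made delicate by the fact that the quadratic form $\|v\|_{\mathbb{X}^{s}_{m,T}}^{2}-m^{2s}|v(\cdot,0)|_{L^{2}(0,T)^{N}}^{2}$ is only \emph{semi}-definite on $\mathbb{X}^{s}_{m,T}$ (it vanishes on the one-dimensional subspace $\mathbb{Y}^{s}_{m,T}=\langle \theta(my)\rangle$, as shown by Theorem \ref{thm6}). I will proceed in three stages: boundedness of $(v_{j})$; extraction of a weak limit with compact trace; upgrade to strong convergence.

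\medskip

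\noindent
\textbf{Step 1 (Boundedness).} I would decompose $v_{j}=y_{j}+z_{j}$ with $y_{j}=\alpha_{j}\theta(my)\in\mathbb{Y}^{s}_{m,T}$ and $z_{j}\in\mathbb{Z}^{s}_{m,T}$. Since $y_{j}(x,0)=\alpha_{j}$ is constant and $\int z_{j}(x,0)\,dx=0$, the mixed trace term vanishes; combined with the equality case in (\ref{eqY}) this yields
\[
\|v_{j}\|_{\mathbb{X}^{s}_{m,T}}^{2}-m^{2s}|v_{j}(\cdot,0)|_{L^{2}}^{2}=\|z_{j}\|_{\mathbb{X}^{s}_{m,T}}^{2}-m^{2s}|z_{j}(\cdot,0)|_{L^{2}}^{2}\geq C\|z_{j}\|_{\mathbb{X}^{s}_{m,T}}^{2},
\]
the last inequality being the coercivity on $\mathbb{Z}^{s}_{m,T}$ proved in Lemma 7. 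Forming $\mathcal{J}_{m}(v_{j})-\tfrac{1}{\mu}\langle \mathcal{J}_{m}'(v_{j}),v_{j}\rangle$ and invoking $(f5)$ (together with the harmless contribution from $\{|v_{j}|<r_{0}\}$) gives
\[
c+o(1)+o(\|v_{j}\|_{\mathbb{X}^{s}_{m,T}})\geq \Bigl(\tfrac{1}{2}-\tfrac{1}{\mu}\Bigr)C\|z_{j}\|_{\mathbb{X}^{s}_{m,T}}^{2}-C',
\]
which controls $\|z_{j}\|_{\mathbb{X}^{s}_{m,T}}$ linearly in $\|v_{j}\|_{\mathbb{X}^{s}_{m,T}}$. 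The coefficient $\alpha_{j}$ must then be bounded separately, and here I use the superlinear growth of $F$. From $\mathcal{J}_{m}(v_{j})=c+o(1)$ and Lemma \ref{F**} I get
\[
a_{3}|v_{j}(\cdot,0)|_{L^{\mu}}^{\mu}\leq \tfrac{1}{2}(\|v_{j}\|_{\mathbb{X}^{s}_{m,T}}^{2}-m^{2s}|v_{j}(\cdot,0)|_{L^{2}}^{2})+C\leq C(1+\|v_{j}\|_{\mathbb{X}^{s}_{m,T}}),
\]
and since $y_{j}(\cdot,0)=\alpha_{j}$ is constant while $|z_{j}(\cdot,0)|_{L^{\mu}}\leq C\|z_{j}\|_{\mathbb{X}^{s}_{m,T}}$ by Theorem \ref{thm2}, the triangle inequality delivers $|\alpha_{j}|^{\mu}\leq C(1+\|v_{j}\|_{\mathbb{X}^{s}_{m,T}})+C\|z_{j}\|_{\mathbb{X}^{s}_{m,T}}^{\mu}$. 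Combining everything with $\|v_{j}\|_{\mathbb{X}^{s}_{m,T}}^{2}=m^{2s}\alpha_{j}^{2}T^{N}+\|z_{j}\|_{\mathbb{X}^{s}_{m,T}}^{2}$ and using $\mu>2$, a power-counting argument forces $\|v_{j}\|_{\mathbb{X}^{s}_{m,T}}$ to be bounded. This is the main obstacle of the proof.

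\medskip

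\noindent
\textbf{Step 2 (Weak limit and compact trace).} Up to a subsequence, $v_{j}\rightharpoonup v$ in $\mathbb{X}^{s}_{m,T}$. By Theorem \ref{thm2}, the trace map is compact into $L^{q}(0,T)^{N}$ for every $1\leq q<2^{\sharp}_{s}$, so in particular $v_{j}(\cdot,0)\to v(\cdot,0)$ strongly in $L^{2}(0,T)^{N}$ and in $L^{p+1}(0,T)^{N}$.

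\medskip

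\noindent
\textbf{Step 3 (Strong convergence).} I would test $\mathcal{J}_{m}'(v_{j})$ against $v_{j}-v$:
\[
\langle \mathcal{J}_{m}'(v_{j}),v_{j}-v\rangle =(v_{j},v_{j}-v)_{\mathbb{X}^{s}_{m,T}}-m^{2s}\!\!\int_{(0,T)^{N}}\!\!\!v_{j}(v_{j}-v)(\cdot,0)\,dx-\!\!\int_{(0,T)^{N}}\!\!\!f(x,v_{j})(v_{j}-v)(\cdot,0)\,dx.
\]
The left side is $o(1)$ by (\ref{PS}) and Step 1. The $L^{2}$-term tends to zero by Step 2; for the nonlinear term I use $|f(x,v_{j})|\leq C(1+|v_{j}|^{p})$ to bound $f(\cdot,v_{j})$ in $L^{(p+1)/p}(0,T)^{N}$ and apply H\"older together with $v_{j}(\cdot,0)\to v(\cdot,0)$ in $L^{p+1}(0,T)^{N}$. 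Hence $(v_{j},v_{j}-v)_{\mathbb{X}^{s}_{m,T}}\to 0$, while $(v,v_{j}-v)_{\mathbb{X}^{s}_{m,T}}\to 0$ by weak convergence, giving $\|v_{j}-v\|_{\mathbb{X}^{s}_{m,T}}^{2}\to 0$.
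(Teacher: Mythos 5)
Your proof is correct, and Steps 2 and 3 are close to what the paper does (the paper splits the nonlinear term into $\int f(x,v_j)v_j$ and $\int f(x,v_j)v$ and argues by dominated convergence, whereas you test $\mathcal{J}_m'(v_j)$ against $v_j-v$ and use H\"older; both are standard and interchangeable). The genuine difference is in Step 1. You decompose $v_j=y_j+z_j$ along $\mathbb{Y}^{s}_{m,T}\oplus\mathbb{Z}^{s}_{m,T}$, use the coercivity (\ref{eqnorm}) on $\mathbb{Z}^{s}_{m,T}$ to get $\|z_j\|_{\mathbb{X}^{s}_{m,T}}^{2}\leq C(1+\|v_j\|_{\mathbb{X}^{s}_{m,T}})$, separately control the coefficient $\alpha_j$ via the $L^{\mu}$ trace bound and the triangle inequality, and close with a power-counting argument. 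The paper never decomposes $v_j$: it observes that the semidefinite quadratic part is \emph{nonnegative} by (\ref{partequad}), discards it in $\mathcal{J}_m(v_j)-\beta\langle\mathcal{J}_m'(v_j),v_j\rangle$, and extracts the two bounds $\int F(x,v_j)\,dx\leq C(1+\|v_j\|_{\mathbb{X}^{s}_{m,T}})$ and $|v_j(\cdot,0)|_{L^{2}(0,T)^{N}}^{\mu}\leq C(1+\|v_j\|_{\mathbb{X}^{s}_{m,T}})$. These are then fed into the identity
\[
\|v_j\|_{\mathbb{X}^{s}_{m,T}}^{2}=2\mathcal{J}_m(v_j)+m^{2s}|v_j(\cdot,0)|_{L^{2}(0,T)^{N}}^{2}+2\int_{\partial^{0}\mathcal{S}_T}F(x,v_j)\,dx,
\]
which (using $\mu>2$, so $2/\mu<1$) immediately yields $\|v_j\|_{\mathbb{X}^{s}_{m,T}}^{2}\leq C_6+C_7\|v_j\|_{\mathbb{X}^{s}_{m,T}}$. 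The paper's route is shorter and does not require handling the $\mathbb{Y}/\mathbb{Z}$ degeneracy explicitly; yours is more transparent about \emph{where} the degeneracy lives and why the AR condition kills it, at the cost of an extra power-counting step. One small imprecision to note: the inequality you derive gives $\|z_j\|_{\mathbb{X}^{s}_{m,T}}\leq C(1+\|v_j\|_{\mathbb{X}^{s}_{m,T}})^{1/2}$, a square-root rather than a linear bound, though this is exactly what your subsequent power-counting uses.
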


\begin{proof}
We start proving that $(v_{j})$ is bounded in $\mathbb{X}^{s}_{m,T}$.

Fix $\beta \in (\frac{1}{\mu}, \frac{1}{2})$. 
By using Lemma \ref{lfF} with $\varepsilon=1$, we get
\begin{align}\label{teresa1}
\Bigl|\int_{\partial^{0}\mathcal{S}_{T}\cap \{|v_{j}|\leq r_{0}\}} \beta f(x,v_{j})v_{j}-F(x,v_{j}) dx \Bigr| &\leq ((2\beta +1) r_{0}^{2} + C_{1} (p+2) r_{0}^{p+1}) T^{N}= \iota_{1}
\end{align}
and
\begin{align}\label{teresa2}
\Bigl|\int_{\partial^{0}\mathcal{S}_{T}\cap \{|v_{j}|\leq r_{0}\}} F(x,v_{j}) dx \Bigr| &\leq (r_{0}^{2}+ C_{1} r_{0}^{p+1}) T^{N}= \iota_{2}. 
\end{align}
Taking into account Lemma \ref{F**}, $(f5)$, (\ref{partequad}), (\ref{PS}), (\ref{teresa1}) and (\ref{teresa2}) we have for $j$ large 
\begin{align}
&c+1+||v_{j}||_{\mathbb{X}^{s}_{m,T}} \geq  \mathcal{J}_{m}(v_{j})-\beta \langle  \mathcal{J}'(v_{j}), v_{j} \rangle \nonumber \\
&=\Bigl(\frac{1}{2}-\beta \Bigr) \Bigl[ ||v_{j}||_{\mathbb{X}^{s}_{m,T}}^{2}-m^{2s}|v_{j}(\cdot,0)|_{L^{2}(0,T)^{N}}^{2} \Bigr]+\int_{\partial^{0}\mathcal{S}_{T}} \beta f(x,v_{j})v_{j}-F(x,v_{j}) dx \nonumber \\
&\geq \int_{\partial^{0} \mathcal{S}_{T}} [\beta f(x,v_{j})v_{j}-F(x,v_{j})]dx \nonumber \\
&= \int_{\partial^{0} \mathcal{S}_{T}\cap \{|v_{n}|\geq r_{0}\}} [\beta f(x,v_{j})v_{j}-F(x,v_{j})]dx  +  \int_{\partial^{0} \mathcal{S}_{T}\cap \{|v_{n}|\leq r_{0}\}} [\beta f(x,v_{j})v_{j}-F(x,v_{j})]dx  \nonumber \\
&\geq (\mu \beta-1) \int_{\partial^{0}\mathcal{S}_{T}\cap \{|v_{j}|\geq r_{0}\}} F(x,v_{j}) dx-\iota_{1} \nonumber\\
&\geq (\mu \beta -1)\int_{\partial^{0} \mathcal{S}_{T}} F(x,v_{j})dx -(\mu \beta -1)\iota_{2}  -\iota_{1} \nonumber \\
&= (\mu \beta-1) \int_{\partial^{0}\mathcal{S}_{T}} F(x,v_{j}) dx-\iota \label{4.2}\\
& \geq (\mu \beta-1)[a_{3}|v_{j}(\cdot,0)|_{L^{\mu}(0,T)^{N}}^{\mu}-a_{4}T^{N}]-\iota \nonumber \\
& \geq (\mu \beta-1)[a_{3}|v_{j}(\cdot,0)|^{\mu}_{L^{2}(0,T)^{N}}T^{-N\frac{\mu-2}{2}}-a_{4}T^{N}]-\iota. \label{4.3}
\end{align}

Hence, by (\ref{4.2}) and (\ref{4.3}), we deduce that
\begin{align*}
||v_{j}||_{\mathbb{X}^{s}_{m,T}}^{2} &=2 \mathcal{J}_{m}(v_{j})+m^{2s}|v_{j}(\cdot,0)|^{2}_{L^{2}(0,T)^{N}}+2\int_{\partial^{0}\mathcal{S}_{T}} F(x,v_{j}) dx \\
& \leq C_{1}+C_{2} (C_{3}+1+||v_{j}||_{\mathbb{X}^{s}_{m,T}})^{\frac{2}{\mu}}+ C_{4}(C_{5} +1+||v_{j}||_{\mathbb{X}^{s}_{m,T}}) \\
& \leq C_{6}+ C_{7} ||v_{j}||_{\mathbb{X}^{s}_{m,T}}
\end{align*}
that is $(v_{j})$ is bounded in $\mathbb{X}^{s}_{m,T}$.

By Theorem \ref{tracethm} we can assume, up to a subsequence,  that
\begin{align}\label{3.7}
v_{j} &\rightharpoonup v \mbox{ in } \mathbb{X}^{s}_{m,T}  \nonumber \\
v_{j}(\cdot,0) &\rightarrow v(\cdot,0) \mbox{ in } L^{p+1}(0,T)^{N} \\
v_{j}(\cdot,0) &\rightarrow  v(\cdot,0) \mbox{ a.e in } (0,T)^{N} \nonumber
\end{align}
as $j\rightarrow \infty$ and there exists $h\in L^{p+1}(0,T)^{N}$ such that
\begin{equation}\label{3.10}
|v_{j}(x,0)|\leq h(x) \quad \mbox{ a.e. in } x\in (0,T)^{N}, \quad \mbox{ for all } j\in \N.
\end{equation}
Taking into account $(f2)$, $(f4)$, (\ref{3.7}), (\ref{3.10}) and the Dominated Convergence Theorem  we get
\begin{equation}\label{1}
\int_{\partial^{0}\mathcal{S}_{T}} f(x,v_{j}) v_{j} dx \rightarrow \int_{\partial^{0}\mathcal{S}_{T}} f(x,v) v dx
\end{equation}
and
\begin{equation}\label{2}
\int_{\partial^{0}\mathcal{S}_{T}} f(x,v_{j}) v dx \rightarrow \int_{\partial^{0}\mathcal{S}_{T}} f(x,v) v dx
\end{equation}
as $j \rightarrow \infty$.

By using (\ref{PS}) and the boundedness of $(v_{j})_{j\in \N}$ in $\mathbb{X}^{s}_{m,T}$, we deduce that 
$\langle  \mathcal{J}_{m}'(v_{j}),v_{j} \rangle \rightarrow 0$, that is
\begin{equation}\label{wc1}
||v_{j}||_{\mathbb{X}^{s}_{m,T}}^{2}- m^{2s}|v_{j}(\cdot,0)|_{L^{2}(0,T)^{N}}^{2}-\int_{\partial^{0}\mathcal{S}_{T}} f(x,v_{j}) v_{j} dx \rightarrow 0
\end{equation}
as $j\rightarrow \infty$.
By $(\ref{3.7})$, (\ref{1}) and (\ref{wc1}) we have
\begin{equation}\label{wc2}
||v_{j}||_{\mathbb{X}^{s}_{m,T}}^{2} \rightarrow m^{2s}|v(\cdot,0)|_{L^{2}(0,T)^{N}}^{2}-\int_{\partial^{0}\mathcal{S}_{T}} f(x,v) v dx.
\end{equation}
Moreover, by (\ref{PS}) and $v \in \mathbb{X}^{s}_{m,T}$, we have $\langle  \mathcal{J}_{m}'(v_{j}),v \rangle \rightarrow 0$ as $j\rightarrow \infty$, that is
\begin{equation}\label{wc3}
\langle v_{j}, v \rangle_{\mathbb{X}^{s}_{m,T}}- m^{2s} \langle v_{j},v \rangle_{L^{2}(0,T)^{N}}-\int_{\partial^{0}\mathcal{S}_{T}} f(x,v_{j}) v dx \rightarrow 0
\end{equation}
Taking into account (\ref{3.7}), (\ref{3.10}), (\ref{2}) and (\ref{wc3}) we obtain
\begin{equation}\label{wc4}
||v||_{\mathbb{X}^{s}_{m,T}}^{2}= m^{2s}|v(\cdot,0)|_{L^{2}(0,T)^{N}}^{2}-\int_{\partial^{0}\mathcal{S}_{T}} f(x,v) v dx.
\end{equation}
Thus, (\ref{wc2}) and (\ref{wc4}) imply that 
\begin{equation}\label{wc5}
||v_{j}||_{\mathbb{X}^{s}_{m,T}}^{2}\rightarrow ||v||_{\mathbb{X}^{s}_{m,T}}^{2} \mbox{ as } j\rightarrow \infty.
\end{equation}
Since $\mathbb{X}^{s}_{m,T}$ is a Hilbert space, we have
$$
||v_{j}-v||_{\mathbb{X}^{s}_{m,T}}^{2}=||v_{j}||_{\mathbb{X}^{s}_{m,T}}^{2}+||v||_{\mathbb{X}^{s}_{m,T}}^{2}-2 \langle v_{j},v \rangle_{\mathbb{X}^{s}_{m,T}}
$$
and using $v_{j} \rightharpoonup v$ in $\mathbb{X}^{s}_{m,T}$ and (\ref{wc5})
we can conclude that $v_{j} \rightarrow v$ in $\mathbb{X}^{s}_{m,T}$, as $j \rightarrow \infty$.

\end{proof}

\begin{proof} [of Theorem \ref{thm1}]
Taking into account Lemma \ref{Linking1} - Lemma \ref{Linking4}, by the Theorem \ref{Linking Thm} we deduce that for any fixed $m>0$, there exists of a function $v\in \mathbb{X}^{s}_{m,T}$ such that
$$
\mathcal{J}_{m}(v_{m})=\alpha_{m}, \quad \mathcal{J}'_{m}(v_{m})=0
$$
where
\begin{equation}\label{criticalvalue}
\alpha_{m}=\inf_{\gamma \in \Gamma_{m}} \max_{v\in \mathbb{A}^{s}_{m,T}} \mathcal{J}_{m}(\gamma(v))
\end{equation}
and 
$$
\Gamma_{m}=\{\gamma \in \mathcal{C}(\mathbb{A}^{s}_{m,T}, \mathbb{X}^{s}_{m,T}): \gamma=Id \mbox{ on } \partial \mathbb{A}^{s}_{m,T}\}.
$$

\end{proof}

\begin{remark}
Let us observe that an easy consequence of Theorem \ref{thm1} is the existence of infinitely many distinct $T$-periodic solutions to (\ref{P}). To prove it, one can proceed as in the proof of Corollary $6.44$ in \cite{Rab}.
\end{remark}

\section{Regularity of solutions to $(\ref{P})$}

In this section we study the regularity of weak solutions to the problem $(\ref{P})$.

Firstly we prove the following 

\begin{lem}\label{lemmino}

Let $v\in \mathbb{X}^{s}_{m,T}$ be a weak solution to 
\begin{equation}
\left\{
\begin{array}{ll}
-\dive(y^{1-2s} \nabla v)+m^{2}y^{1-2s}v =0 &\mbox{ in }\mathcal{S}_{T} \\
v_{| {\{x_{i}=0\}}}= v_{| {\{x_{i}=T\}}} &\mbox{ on } \partial_{L}\mathcal{S}_{T} \\
\frac{\partial v}{\partial \nu^{1-2s}}=m^{2s}v+f(x,v)  &\mbox{ on } \partial^{0} \mathcal{S}_{T} 
\end{array}.
\right.
\end{equation}
Then $v(\cdot,0)\in L^{q}(0,T)^{N}$ for all $q<\infty$.
\end{lem}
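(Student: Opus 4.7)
The plan is to run a Moser/Brezis--Kato iteration on the extension problem, feeding the sharp trace embedding of Theorem \ref{thm2} back into itself. The starting point is $u := v(\cdot,0) \in L^{2^{\s}_{s}}(0,T)^{N}$ given by Theorem \ref{thm2}, and the goal is to bootstrap this to arbitrary $L^{q}$.

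First I would recast the nonlinear Neumann datum as a linear term $a(x)u(x)$ by setting
$$
a(x) := m^{2s} + \frac{f(x,u(x))}{u(x)} \qquad (a(x) := m^{2s} \text{ on } \{u=0\}).
$$
By $(f4)$ one has $|a(x)|\leq C(1+|u(x)|^{p-1})$, and the subcritical condition $p<2^{\s}_{s}-1$ is equivalent to $r := 2^{\s}_{s}/(p-1) > N/(2s)$; hence $a \in L^{r}(0,T)^{N}$.

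For fixed $\beta \geq 0$ and $L>0$ I would then set $v_{L}:=\mathrm{sgn}(v)\min(|v|,L)$ and use the admissible test function $\phi := v\,|v_{L}|^{2\beta}$ in the weak formulation. Splitting the computation into $\{|v|\leq L\}$ and $\{|v|>L\}$ gives the pointwise bounds
$$
\nabla v \cdot \nabla\phi \;\geq\; \frac{1}{\beta+1}\,|\nabla(v|v_{L}|^{\beta})|^{2}, \qquad v\phi \;=\; (v|v_{L}|^{\beta})^{2}.
$$
Writing $w := v|v_{L}|^{\beta} \in \mathbb{X}^{s}_{m,T}$ and combining with the trace embedding of Theorem \ref{thm2} yields
$$
|w(\cdot,0)|^{2}_{L^{2^{\s}_{s}}(0,T)^{N}} \;\leq\; C(\beta+1)\int_{\partial^{0}\mathcal{S}_{T}} a(x)\,w(\cdot,0)^{2}\,dx.
$$

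To control the right-hand side I would use the Brezis--Kato splitting $a = a_{1}+a_{2}$ with $a_{1} := a\,\chi_{\{|a|\leq K\}}$ bounded and $|a_{2}|_{L^{r}}$ arbitrarily small by taking $K$ large. H\"older with conjugate exponents $r,r'$ together with the embedding $L^{2^{\s}_{s}}\hookrightarrow L^{2r'}$ --- which is available because $r>N/(2s)$ forces $2r'<2^{\s}_{s}$ --- yields
$$
\int_{\partial^{0}\mathcal{S}_{T}} a_{2}\,w(\cdot,0)^{2}\,dx \;\leq\; C\,|a_{2}|_{L^{r}}\,|w(\cdot,0)|^{2}_{L^{2^{\s}_{s}}},
$$
which is absorbed into the LHS once $K$ is sufficiently large. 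The $a_{1}$ part is bounded by $K\,|w(\cdot,0)|^{2}_{L^{2}}$, finite whenever $u \in L^{2(\beta+1)}$. Letting $L\to\infty$ through Fatou's lemma gives the bootstrap implication
$$
u \in L^{2(\beta+1)}(0,T)^{N} \;\Longrightarrow\; u \in L^{(\beta+1)\,2^{\s}_{s}}(0,T)^{N}.
$$

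Starting from $u \in L^{2^{\s}_{s}}$ (so $\beta_{0}+1 = 2^{\s}_{s}/2$) and iterating via $\beta_{n+1}+1 := (2^{\s}_{s}/2)(\beta_{n}+1)$, the exponent grows geometrically and covers every finite $q$. The main obstacle is the absorption step, which closes precisely because the subcritical assumption $p<\frac{N+2s}{N-2s}$ places $a$ in a Lebesgue space strictly above the critical $L^{N/(2s)}$; the degenerate weight $y^{1-2s}$ causes no issue, being already baked into the $\mathbb{X}^{s}_{m,T}$ norm and Theorem \ref{thm2}.
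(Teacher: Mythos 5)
Your proposal is correct and follows essentially the same Moser/Brezis--Kato iteration as the paper's proof of Lemma~\ref{lemmino}: the same truncated test function $v\,v_{K}^{2\beta}$ (since $|v_{L}|^{2\beta}=v_{K}^{2\beta}$), the same absorption of the supercritical boundary term via the sharp trace embedding of Theorem~\ref{thm2} after splitting the Neumann coefficient into a bounded piece and a small-$L^{r}$ tail with $r>N/(2s)$, and the same geometric bootstrap $\beta_{n+1}+1 = \frac{N}{N-2s}(\beta_{n}+1)$. The only cosmetic differences are that you package the Neumann datum as a single coefficient $a(x)=m^{2s}+f(x,u)/u$ where the paper bounds $|v|^{p-1}\leq 1+h$ with $h\in L^{N/2s}$, and that you invoke Fatou where the paper uses Monotone Convergence when $K\to\infty$; both are equivalent.
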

\begin{proof}
We proceed as in the proof of Lemma $7$ in \cite{A2}. 
Since $v$ is a critical point for $\mathcal{J}_{m}$, we know that
\begin{equation}\label{criticpoint}
\iint_{\mathcal{S}_{T}} y^{1-2s}(\nabla v \nabla \eta+m^{2}v \eta) \, dxdy=\int_{\partial^{0} \mathcal{S}_{T}} m^{2s}v\eta+f(x,v)\eta \,dx
\end{equation}
for all $\eta\in \mathbb{X}^{m}_{T}$.

Let $w=vv^{2\beta}_{K}\in \mathbb{X}^{s}_{m,T}$ where $v_{K}=\min\{|v|,K\}$, $K>1$ and $\beta\geq 0$.
Taking $\eta=w$ in (\ref{criticpoint}) we deduce that 
\begin{align}\label{conto1}
\iint_{\mathcal{S}_{T}}  &y^{1-2s}v^{2\beta}_{K}(|\nabla v|^{2}+m^{2}v^{2}) \, dxdy+\iint_{D_{K,T}} 2\beta y^{1-2s}v^{2\beta}_{K} |\nabla v|^{2} \, dx dy  \nonumber \\
&=m^{2s}\int_{\partial^{0}\mathcal{S}_{T}} v^{2} v^{2\beta}_{K} \,dx+ \int_{\partial^{0}\mathcal{S}_{T}} f(x,v)vv^{2\beta}_{K} \,dx 
\end{align}
where $D_{K,T}=\{(x,y)\in \mathcal{S}_{T}: |v(x,y)|\leq K\}$. \\
It is easy to see that
\begin{align}\label{conto2}
\iint_{\mathcal{S}_{T}} &y^{1-2s}|\nabla (vv_{K}^{\beta})|^{2} dxdy \nonumber \\
&=\iint_{\mathcal{S}_{T}} y^{1-2s}v_{K}^{2\beta} |\nabla v|^{2} dxdy+\iint_{D_{K,T}} (2\beta+\beta^{2}) y^{1-2s}v_{K}^{2\beta} |\nabla v|^{2} dxdy.
\end{align}
Then, putting together (\ref{conto1}) and (\ref{conto2}) we get 
\begin{align}\label{S1}
&||vv_{K}^{\beta}||_{\mathbb{X}^{s}_{m,T}}^{2} \nonumber \\
&=\iint_{\mathcal{S}_{T}} y^{1-2s}[|\nabla (vv_{K}^{\beta})|^{2}+m^{2}v^{2}v_{K}^{2\beta}] dxdy \nonumber \\
&=\iint_{\mathcal{S}_{T}} y^{1-2s}v_{K}^{2\beta}[ |\nabla v|^{2}+m^{2}v^{2}] dxdy+\iint_{D_{K,T}} 2\beta \Bigl(1+\frac{\beta}{2}\Bigr) y^{1-2s}v_{K}^{2\beta} |\nabla v|^{2} dxdy \nonumber \\
&\leq c_{\beta} \Bigl[\iint_{\mathcal{S}_{T}} y^{1-2s}v_{K}^{2\beta}[ |\nabla v|^{2}+m^{2}v^{2}] dxdy+\iint_{D_{K,T}} 2\beta y^{1-2s}v_{K}^{2\beta} |\nabla v|^{2} dxdy\Bigr] \nonumber \\
&=c_{\beta} \int_{\partial^{0}\mathcal{S}_{T}} m^{2s}v^{2}v_{K}^{2\beta} + f(x,v)v v_{K}^{2\beta} \,dx 
\end{align}
where $c_{\beta}=1+\frac{\beta}{2}$.
By Lemma \ref{lfF} with $\varepsilon=1$ we deduce that
\begin{equation*}
m^{2s}v^{2}v_{K}^{2\beta} + f(x,v)v v_{K}^{2\beta}\leq (m^{2s}+2)v^{2}v_{K}^{2\beta}+(p+1)C_{1}|v|^{p-1}v^{2}v_{K}^{2\beta} \mbox{ on } \partial^{0}\mathcal{S}_{T}.
\end{equation*}
Now, we prove that
\begin{equation*}
|v|^{p-1}\leq 1+h \mbox{ on } \partial^{0}\mathcal{S}_{T}
\end{equation*}
for some $h\in L^{N/2s}(0,T)^{N}$.
Firstly, we observe that
$$
|v|^{p-1}=\chi_{\{|v|\leq 1\}}|v|^{p-1}+\chi_{\{|v|>1\}}|v|^{p-1}\leq 1+\chi_{\{|v|>1\}}|v|^{p-1}\ \mbox{ on } \partial^{0}\mathcal{S}_{T}.
$$
If $(p-1)N<4s$ then 
$$
\int_{\partial^{0}\mathcal{S}_{T}} \chi_{\{|v|>1\}}|v|^{\frac{N}{2s}(p-1)} dx \leq \int_{\partial^{0}\mathcal{S}_{T}} \chi_{\{|v|>1\}}|v|^{2} dx<\infty
$$
while if $4s\leq (p-1)N$ we have that $(p-1)\frac{N}{2s}\in [2,\frac{2N}{N-2s}]$.

Therefore, there exist a constant $c=m^{2s}+2+(p+1)C_{1}$ and a function $h\in L^{N/2s}(0,T)^{N}$, $h\geq 0$ and independent of $K$ and $\beta$, such that
\begin{equation}\label{S2}
m^{2s}v^{2}v_{K}^{2\beta} + f(x,v)vv_{K}^{2\beta}\leq (c+h)v^{2}v_{K}^{2\beta} \mbox{ on } \partial^{0}\mathcal{S}_{T}.
\end{equation}
Taking into account (\ref{S1}) and (\ref{S2}) we have
\begin{equation*}
||vv_{K}^{\beta}||_{\mathbb{X}^{s}_{m,T}}^{2}\leq c_{\beta} \int_{\partial^{0}\mathcal{S}_{T}} (c+h)v^{2}v_{K}^{2\beta} dx,
\end{equation*}
and by Monotone Convergence Theorem ($v_{K}$ is increasing with respect to $K$) we have as $K\rightarrow \infty$
\begin{equation}\label{i1}
|||v|^{\beta+1}||_{\mathbb{X}^{s}_{m,T}}^{2}\leq cc_{\beta} \int_{\partial^{0}\mathcal{S}_{T}} |v|^{2(\beta +1)} dx + c_{\beta}\int_{\partial^{0}\mathcal{S}_{T}}  h|v|^{2(\beta +1)}dx.
\end{equation}
Fix $M>0$ and let $A_{1}=\{h\leq M\}$ and $A_{2}=\{h>M\}$.

Then
\begin{equation}\label{i2}
\int_{\partial^{0}\mathcal{S}_{T}}  h|v(\cdot,0)|^{2(\beta +1)} dx\leq M||v(\cdot,0)|^{\beta+1}|_{L^{2}(0,T)^{N}}^{2}+\varepsilon(M)||v(\cdot,0)|^{\beta+1}|_{L^{2^{\s}}(0,T)^{N}}^{2}
\end{equation}
where $\varepsilon(M)=\Bigl(\int_{A_{2}} h^{N/2s} dx \Bigr)^{\frac{2s}{N}}\rightarrow 0$ as $M\rightarrow \infty$.
Taking into account (\ref{i1}), (\ref{i2}), we get
\begin{equation}\label{regv}
|||v|^{\beta+1}||_{\mathbb{X}^{s}_{m,T}}^{2}\leq c_{\beta}(c+M)||v(\cdot,0)|^{\beta+1}|_{L^{2}(0,T)^{N}}^{2}+c_{\beta}\varepsilon(M)||v(\cdot,0)|^{\beta+1}|_{L^{2^{\s}_{s}}(0,T)^{N}}^{2}.
\end{equation}
By using Theorem $\ref{thm2}$ we know that there exists a constant $C^{2}_{2^{\s}_{s},m}>0$ such that
\begin{equation}\label{S3}
||v(\cdot,0)|^{\beta+1}|_{L^{2^{\s}_{s}}(0,T)^{N}}^{2}\leq C^{2}_{2^{\s}_{s},m} |||v|^{\beta+1}||_{\mathbb{X}^{s}_{m,T}}^{2}.
\end{equation}
Then, choosing $M$ large so that $\varepsilon(M) c_{\beta} C^{2}_{2^{\s},m}<\frac{1}{2}$,
and by using $(\ref{regv})$ and $(\ref{S3})$ we obtain
\begin{equation}\label{iter}
||v(\cdot,0)|^{\beta+1}|_{L^{2^{\s}_{s}}(0,T)^{N}}^{2}\leq 2 C^{2}_{2^{\s}_{s},m} c_{\beta}(c+M)||v(\cdot,0)|^{\beta+1}|^{2}_{L^{2}(0,T)^{N}}.
\end{equation}
Then we can start a bootstrap argument: since $v(\cdot,0)\in L^{\frac{2N}{N-2s}}$ we can apply (\ref{iter}) with $\beta_{1}+1=\frac{N}{N-2s}$ to deduce that $v(\cdot,0)\in L^{\frac{(\beta_{1}+1)2N}{N-2s}}(0,T)^{N}=L^{\frac{2N^{2}}{(N-2s)^{2}}}(0,T)^{N}$. Applying (\ref{iter}) again, after $k$ iterations, we find $v(\cdot,0)\in L^{\frac{2N^{k}}{(N-2s)^{k}}}(0,T)^{N}$, and so $v(\cdot,0)\in L^{q}(0,T)^{N}$ for all $q\in[2,\infty)$.

\end{proof}

Then, we can deduce the following result:

\begin{thm}\label{regularity}
Let $v\in \mathbb{X}^{s}_{m,T}$ be a weak solution to 
\begin{equation}
\left\{
\begin{array}{ll}
-\dive(y^{1-2s} \nabla v)+m^{2}y^{1-2s}v =0 &\mbox{ in }\mathcal{S}_{T} \\
v_{| {\{x_{i}=0\}}}= v_{| {\{x_{i}=T\}}} &\mbox{ on } \partial_{L}\mathcal{S}_{T} \\
\frac{\partial v}{\partial \nu^{1-2s}}=\kappa_{s}[m^{2s}v+f(x,v)]  &\mbox{ on } \partial^{0} \mathcal{S}_{T} 
\end{array}.
\right.
\end{equation}
Let us assume that $v$ is extended by periodicity to the whole $\R^{N+1}_{+}$. 
Then $v(\cdot,0)\in \mathcal{C}^{0,\alpha}(\R^{N})$ for some $\alpha \in (0,1)$.
\end{thm}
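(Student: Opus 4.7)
The plan is to combine the integrability provided by Lemma \ref{lemmino} with a De Giorgi--Nash--Moser type regularity theory for the degenerate elliptic operator $-\dive(y^{1-2s}\nabla\,\cdot\,)$, whose weight $y^{1-2s}$ lies in the Muckenhoupt class $A_{2}$ since $|1-2s|<1$. In this framework both interior and boundary H\"older estimates are well developed (Fabes--Kenig--Serapioni on the interior side, and Cabr\'e--Sire, Jin--Li--Xiong, Tan for the Neumann boundary condition of the extension problem).

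First I would use Lemma \ref{lemmino} together with the growth condition $(f4)$ to deduce that the Neumann datum
\[
g(x):=\kappa_{s}\bigl[m^{2s}v(x,0)+f(x,v(x,0))\bigr]
\]
belongs to $L^{q}(0,T)^{N}$ for every $q<\infty$. After extending $v$ periodically to $\R^{N}\times(0,\infty)$, the extended $v$ is still a weak solution of the same degenerate equation on the whole half-space with Neumann datum $g$ on $\{y=0\}$, and in particular we may work locally in the $x$-variable.

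Next I would run a Moser iteration scheme adapted to the weight $y^{1-2s}$ and to the Neumann boundary condition, using test functions of the type $v v_{K}^{2\beta}$ as already done in the proof of Lemma \ref{lemmino}. The only novelty compared with that lemma is that, because $g\in L^{q}_{\mathrm{loc}}$ for arbitrarily large $q$, one can now push the iteration past the critical exponent and obtain $v\in L^{\infty}_{\mathrm{loc}}(\overline{\R^{N+1}_{+}})$. The interior lower-order term $m^{2}y^{1-2s}v$ is treated as an $L^{2}(y^{1-2s})$ forcing and absorbed in the standard way.

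Finally, once $v\in L^{\infty}_{\mathrm{loc}}(\overline{\R^{N+1}_{+}})$, the continuity of $f$ makes $g$ locally bounded as well, and we are in position to apply the boundary H\"older estimate for weighted equations of the form $-\dive(y^{1-2s}\nabla v)=h$ with $\partial v/\partial\nu^{1-2s}\in L^{\infty}_{\mathrm{loc}}$. This yields $v\in \mathcal{C}^{0,\alpha}_{\mathrm{loc}}(\overline{\R^{N+1}_{+}})$ for some $\alpha\in(0,1)$; restricting to $\{y=0\}$ and using periodicity produces the desired conclusion $v(\cdot,0)\in\mathcal{C}^{0,\alpha}(\R^{N})$.

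The main obstacle I anticipate is the bookkeeping needed to pass from the $L^{q}$-bootstrap of Lemma \ref{lemmino} to a genuine $L^{\infty}$ bound and then to a uniform H\"older estimate, verifying that the constants in the weighted Moser iteration do not blow up as the exponent increases and that the nonlinear Neumann datum $g$ can indeed be treated as an $L^{\infty}$ forcing. This is however a routine adaptation of the $A_{2}$-weighted regularity theory to the cylindrical, periodic setting, and does not require new ideas beyond those already in the references cited in the introduction.
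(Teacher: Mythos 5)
Your proposal is correct and follows essentially the same route as the paper: combine the $L^q$-integrability from Lemma \ref{lemmino} with weighted ($A_2$) elliptic regularity for the extension problem to conclude $\mathcal{C}^{0,\alpha}$ regularity of the trace. The only difference is that the paper black-boxes the second step by citing Proposition 3.5 of Fall--Felli, whereas you unroll that proposition into the expected Moser iteration to $L^{\infty}_{\mathrm{loc}}$ followed by a boundary H\"older estimate, which is the standard proof of such a statement.
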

\begin{proof}
It is clear that $v \in H^{1}_{m}(A\times \R_{+},y^{1-2s})$ for any bounded domain $A\subset \R^{N}$.
By using Lemma \ref{lemmino} here and Proposition $3.5.$  in \cite{FallFelli} we deduce the thesis.

\end{proof}

\section{Passage to the limit as $m \rightarrow 0$}

In this last section, we give the proof of Theorem \ref{thmdue}. We verify that it is possible to take the limit in (\ref{R}) as $m\rightarrow 0$ so that we deduce the existence of a nontrivial weak solution to (\ref{P'}).
In particular, we will prove that such solution is H\"older continuous.
We remark that in Section $4$ we proved that for any $m>0$ there exists $v_{m}\in \mathbb{X}^{s}_{m,T}$ such that 
\begin{align}\label{zero}
\mathcal{J}_{m}(v_{m})= \alpha_{m} \quad \mbox{ and } \quad \mathcal{J}'_{m}(v_{m})=0,
\end{align} 
where $\alpha_{m}$ is defined in (\ref{criticalvalue}). 
In order to attain our aim, we estimate, from above and below, the critical levels of the functional $\mathcal{J}_{m}$ independently of $m$. 

Let us assume that $0<m<m_{0}:=\frac{1}{2C^{2}_{2^{\s}_{s}}}$, where $C_{2^{\s}_{s}}$ is the Sobolev constant which appears in (\ref{benyi}).

We start proving that there exists a positive constant $\delta$ independent of $m$ such that 
\begin{equation}\label{cmdelta}
\alpha_{m}\leq \delta \quad \mbox{ for all }  0<m<m_{0}. 
\end{equation}

By using (\ref{einstein}) and $m<m_{0}$ we know that
$$
C_{1}\leq ||z||_{\mathbb{X}^{s}_{m,T}}^{2} \leq C_{2}+m_{0}^{2}C_{3}.
$$
Moreover (see Lemma \ref{lemma5}) we have for any $v=y+rz \in \mathbb{Y}^{s}_{m,T} \oplus \R_{+}z$
\begin{align}
|v(\cdot,0)|_{L^{\mu}(0,T)^{N}}^{\mu} &\geq T^{-\frac{N(\mu-2)}{2}}|v(\cdot,0)|_{L^{2}(0,T)^{N}}^{\mu} \nonumber \\
&=T^{-\frac{N(\mu-2)}{2}}\Bigl(\int_{(0,T)^{N}} (c+rz)^{2} dx \Bigr)^{\frac{\mu}{2}} \nonumber \\
& \geq  T^{-\frac{N(\mu-2)}{2}} \Bigl(c^{2}T^{N}+(T/2)^{N}\frac{r^{2}}{||z||^{2}_{\mathbb{X}^{s}_{m,T}}} \Bigr)^{\frac{\mu}{2}} \nonumber\\
& \geq  T^{-\frac{N(\mu-2)}{2}} \min\Bigl \{\frac{1}{m^{2s}_{0}}, \frac{(T/2)^{N}}{C_{2}+m_{0}^{2}C_{3}} \Bigr\}(m^{2s}c^{2}T^{N}+r^{2})^{\frac{\mu}{2}} \nonumber \\
&= C||v||_{\mathbb{X}^{s}_{m,T}}^{\mu}
\end{align}
for some $C=C(m_{0},T,N,s)>0$.

Then, for any $v=y+rz\in \mathbb{Y}^{s}_{m,T}\oplus \R_{+} z$ and $0<m<m_{0}$ we get
\begin{align}
\mathcal{J}_{m}(v)&=\frac{1}{2}||v||_{\mathbb{X}^{s}_{m,T}}^{2}-\frac{m^{2s}}{2}|v(\cdot,0)|_{L^{2}(0,T)^{N}}^{2}-\int_{\partial^{0} \mathcal{S}_{T}} F(x,v) dx \nonumber\\
&\leq \frac{1}{2}||v||_{\mathbb{X}^{s}_{m,T}}^{2} -A|v(\cdot,0)|_{L^{\mu}(0,T)^{N}}^{\mu}+B\, T^{N} \nonumber \\
&=||v||_{\mathbb{X}^{s}_{m,T}}^{2}-C||v||_{\mathbb{X}^{s}_{m,T}}^{\mu}+D\leq \delta
\end{align}
where $A, B, C, D, \delta>0$ are independent of $m$.

Now we prove that there exists $\lambda>0$ independent of $m$, such that
\begin{equation}\label{cmlambda}
\alpha_{m}\geq \lambda \quad \mbox{ for all }  0<m<m_{0}. 
\end{equation}

Let $v\in \mathbb{Z}^{s}_{m,T}$ and $\varepsilon>0$. We denote by $c_{k}$ the Fourier coefficients of the trace of $v$.


By using (\ref{benyi}) and (\ref{eqYY}) (with $\kappa_{s}=1$), we know that 
\begin{align}
|v|_{L^{q}(0,T)^{N}} &\leq C_{2^{\s}_{s}} \Bigl( \sum_{|k|\geq 1} \omega ^{2s} |k|^{2s} |c_{k}|^{2} \Bigr)^{\frac{1}{2}} \nonumber \\
&\leq C_{2^{\s}_{s}}  |v|_{\mathbb{H}^{s}_{m,T}} \nonumber \\
&\leq C_{2^{\s}_{s}}  ||v||_{\mathbb{X}^{s}_{m,T}} \label{benyioh}
\end{align}
for any $q\in [2, 2^{\s}_{s}]$. 

By using Lemma \ref{lfF} and (\ref{benyioh}) we can see that for every $0<m<m_{0}$

\begin{align*}
\mathcal{J}_{m}(v)&=\frac{1}{2}\iint_{\mathcal{S}_{T}} y^{1-2s}(|\nabla v|^{2}+m^{2} v^{2}) \ dx dy-\frac{m^{2s}}{2}\int_{\partial^{0} \mathcal{S}_{T}} |v|^{2} dx -\int_{\partial^{0} \mathcal{S}_{T}} F(x,v) dx \\
&\geq \frac{1}{2} ||v||^{2}_{\mathbb{X}^{s}_{m,T}}-\Bigl(\frac{m}{2}+\varepsilon \Bigr) |v(\cdot,0)|_{L^{2}(0,T)^{N}}^{2} -C_{\varepsilon} |v(\cdot,0)|_{L^{p+1}(0,T)^{N}}^{p+1} \\
&\geq  \Bigl[\frac{1}{2}-C^{2}_{2^{\s}_{s}} \Bigl(\frac{m}{2}+\varepsilon \Bigr) \Bigr] ||v||^{2}_{\mathbb{X}^{m}_{T}} -C_{\varepsilon} C^{p+1}_{2^{\s}_{s}} ||v||_{\mathbb{X}^{s}_{m,T}}^{p+1}    \\
& \geq \Bigl(\frac{1}{4}- C^{2}_{2^{\s}_{s}}\varepsilon   \Bigr) ||v||^{2}_{\mathbb{X}^{s}_{m,T}} -C'_{\varepsilon}||v||_{\mathbb{X}^{s}_{m,T}}^{p+1} .
\end{align*}

Choosing $0<\varepsilon<\frac{1}{4 C^{2}_{2^{\s}_{s}}}$, we have that $b:=\frac{1}{4}-C^{2}_{2^{\s}_{s}}\varepsilon>0$.\\
Let $\rho:=\Bigl(\frac{b}{2C'_{b}}\Bigr)^{\frac{1}{p-1}}$.  Then, for every $v\in \mathbb{Z}^{s}_{m,T}$ such that $||v||_{\mathbb{X}_{m,T}^{s}}=\rho$
$$
\mathcal{J}_{m}(v) \geq  b\rho^{2}-C'_{b}\, \rho^{p+1}= \frac{b}{2} \Bigl(\frac{b}{2C'_{b}} \Bigr)^{\frac{2}{p-1}}=:\lambda .
$$
Therefore, taking into account (\ref{cmdelta}) and (\ref{cmlambda}) we deduce that 
\begin{equation}\label{alpham}
\lambda \leq \alpha_{m}\leq \delta \,  \mbox{ for every } \,  0<m<m_{0}.
\end{equation}

Now, we estimate the $H^{1}_{loc}(\mathcal{S}_{T}, y^{1-2s})$- norm of $v_{m}$ in order to pass to the limit in $(\ref{R})$ as $m \rightarrow 0$.

Fix $\beta \in (\frac{1}{\mu},\frac{1}{2})$.
By using  (\ref{zero}) and (\ref{alpham}), we have for any $m\in (0, m_{0})$
\begin{align}
\delta &\geq \mathcal{J}_{m}(v_{m})-\beta \langle \mathcal{J}_{m}'(v_{m}),v_{m} \rangle  \nonumber \\
&=\Bigl(\frac{1}{2}-\beta\Bigr) [ ||v_{m}||_{\mathbb{X}^{s}_{m,T}}^{2}- m^{2s}|v_{m}(\cdot,0)|_{L^{2}(0,T)^{N}}^{2} ] \\
&+\int_{\partial^{0} \mathcal{S}_{T}} [\beta f(x,v_{m})v_{m}-F(x,v_{m})]dx  \nonumber \\
&\geq \int_{\partial^{0} \mathcal{S}_{T}} [\beta f(x,v_{m})v_{m}-F(x,v_{m})]dx  \nonumber \\
&\geq (\mu \beta -1)\int_{\partial^{0} \mathcal{S}_{T}} F(x,v_{m})dx -\tilde{\kappa}   \label{Fineq} \\
&\geq (\mu \beta-1)[a_{3}|v_{m}(\cdot,0)|^{\mu}_{L^{\mu}(0,T)^{N}}-a_{4}T^{N}]-\tilde{\kappa} \nonumber  \\
&\geq (\mu \beta-1)[a_{3}|v_{m}(\cdot,0)|^{\mu}_{L^{2}(0,T)^{N}}T^{-N\frac{\mu-2}{2}}-a_{4}T^{N}]-\tilde{\kappa}. \label{Fi}
\end{align}
By (\ref{Fi}) we deduce that the trace of $v_{m}$ is bounded in $L^{2}(0,T)^{N}$
\begin{equation}\label{vti}
|v_{m}(\cdot,0)|_{L^{2}(0,T)^{N}} \leq K(\delta) \mbox{ for every } m\in (0,m_{0}).
\end{equation}
Taking into account (\ref{zero}), (\ref{alpham}), (\ref{Fineq}) and (\ref{vti}) we deduce
\begin{align}\label{nablavti}
||\nabla v_{m}||^{2}_{L^{2}(\mathcal{S}_{T},y^{1-2s})}&\leq ||v_{m}||^{2}_{\mathbb{X}^{s}_{m,T}} \nonumber \\
&=2J_{m}(v_{m})+m|v_{m}(\cdot,0)|_{L^{2}(0,T)^{N}}^{2}+2\int_{\partial^{0} \mathcal{S}_{T}} F(x,v_{m})dx \nonumber  \\
&\leq 2\delta+\frac{\omega}{2}K(\delta)+C(\delta)=:K'(\delta). 
\end{align}
Now, let  $c_{k}^{m}$ be the Fourier coefficients of the trace of $v_{m}$.
By (\ref{eqYY}) we can see that 
\begin{align}\label{compik}
K'(\delta)&\geq ||v_{m}||^{2}_{\mathbb{X}^{s}_{m,T}}\geq |v_{m}(\cdot,0)|^{2}_{\mathbb{H}^{s}_{m,T}}\nonumber \\
&\geq \sum_{k\in \Z^{N}} \omega^{2s}|k|^{2s}|c_{k}^{m}|^{2},
\end{align}
which, together with (\ref{vti}), implies that 
\begin{align}\label{baroni}
|v_{m}(\cdot,0)|_{\mathbb{H}^{s}_{T}}\leq K''(\delta) \mbox{ for every } m\in (0,m_{0})
\end{align} 
that is $\textup{Tr}(v_{m})$ is bounded in $\mathbb{H}^{s}_{T}$.

Finally we estimate the $L^{2}_{loc}(\mathcal{S}_{T},y^{1-2s})$-norm of $v_{m}$ uniformly in $m$.

Fix $\alpha>0$ and let $v\in C^{\infty}_{T}(\overline{\R^{N+1}_{+}})$ such that $||v_{m}||_{\mathbb{X}^{s}_{m,T}}<\infty$.
For any $x\in [0,T]^{N}$ and $y\in [0, \alpha]$, we have
$$
v(x,y)=v(x,0)+\int_{0}^{y} \partial_{y} v(x,t) dt.
$$
By using $(a+b)^{2}\leq 2a^{2}+2b^{2}$ for all $a, b\geq 0$ we obtain
$$
|v(x,y)|^2 \leq 2  |v(x,0)|^{2}+2\Bigl(\int_{0}^{y}|\partial_{y} v(x,t)| dt\Bigr)^{2},
$$
and applying the H\"older inequality we deduce
\begin{equation}\label{vtii5}
|v(x,y)|^2 \leq 2 \Bigl[ |v(x,0)|^{2}+\Bigl(\int_{0}^{y} t^{1-2s}|\partial_{y} v(x,t)|^{2}dt\Bigr)\frac{y^{2s}}{2s}\,  \Bigr].
\end{equation}
Multiplying both members by $y^{1-2s}$ we have
\begin{equation}\label{vtii}
y^{1-2s}|v(x,y)|^2 \leq 2 \Bigl[ y^{1-2s}|v(x,0)|^{2}+\Bigl(\int_{0}^{y} t^{1-2s} |\partial_{y} v(x,t)|^{2}dt\Bigr)\frac{y}{2s} \Bigr].
\end{equation}
Integrating (\ref{vtii}) over $(0,T)^{N}\times (0,\alpha)$ we have
\begin{align}\label{nash}
||v||_{L^{2}((0,T)^{N}\times (0,\alpha),y^{1-2s})}^2 &\leq \frac{\alpha^{2-2s}}{1-s} |v(\cdot,0)|_{L^{2}(0,T)^{N}}^{2}+\frac{\alpha ^{2}}{2s} ||\partial_{y} v||_{L^{2}(\mathcal{S}_{T},y^{1-2s})}^{2}.
\end{align}
By density, the above inequality holds for any $v\in \mathbb{X}^{s}_{m,T}$.

Then, by using (\ref{nash}) and exploiting (\ref{vti}) and (\ref{nablavti}) we have 
\begin{align*}
||v_{m}||_{L^{2}((0,T)^{N}\times (0,\alpha),y^{1-2s})}^2 &\leq \frac{\alpha^{2-2s}}{1-s} |v_{m}(\cdot,0)|_{L^{2}(0,T)^{N}}^{2}+\frac{\alpha ^{2}}{2s} ||\partial_{y} v_{m}||_{L^{2}(\mathcal{S}_{T},y^{1-2s})}^{2} \\
& \leq C(\alpha,s) K(\delta)^{2}+C'(\alpha,s) K'(\delta)
\end{align*}
for any $0<m<m_{0}$.

As a consequence, we can extract a subsequence, that for simplicity we will denote again with  $(v_{m})$, and a function $v$ such that 
\begin{itemize}
\item $v\in L^{2}_{loc}(\mathcal{S}_{T},y^{1-2s})$ and $\nabla v\in L^{2}(\mathcal{S}_{T},y^{1-2s})$;
\item $v_{m}\rightharpoonup v$ in $L^{2}_{loc}(\mathcal{S}_{T},y^{1-2s})$ as $m\rightarrow 0$; 
\item $\nabla v_{m}\rightharpoonup \nabla v$ in $L^{2}(\mathcal{S}_{T},y^{1-2s})$ as $m\rightarrow 0$; 
\item $v_{m}(\cdot,0)\rightharpoonup v(\cdot,0)$  in $\mathbb{H}^{s}_{T}$ and $v_{m}(\cdot,0)\rightarrow v(\cdot,0)$ in $L^{q}(0,T)^{N}$ as $m\rightarrow 0$, for any $q\in [2, \frac{2N}{N-2s})$.
\end{itemize}

Now we prove that $v$ is a weak solution to 
\begin{equation}\label{Ros}
\left\{
\begin{array}{ll}
-\dive(y^{1-2s} \nabla v) =0 &\mbox{ in }\mathcal{S}_{T}:=(0,T)^{N} \times (0,\infty)  \\
v_{| {\{x_{i}=0\}}}= v_{| {\{x_{i}=T\}}} & \mbox{ on } \partial_{L}\mathcal{S}_{T}:=\partial (0,T)^{N} \times [0,\infty) \\
\frac{\partial v}{\partial \nu^{1-2s}}=f(x,v)   &\mbox{ on }\partial^{0}\mathcal{S}_{T}:=(0,T)^{N} \times \{0\}
\end{array}.
\right.
\end{equation}

Fix $\varphi \in \mathbb{X}^{s}_{T}$.
We know that $v_{m}$ satisfies 
\begin{equation}\label{spqr}
\iint_{\mathcal{S}_{T}} y^{1-2s}(\nabla v_{m} \nabla \eta+m^{2}v_{m}\eta) \; dxdy=\int_{\partial^{0} \mathcal{S}_{T}} [m^{2s}v_{m}+f(x,v_{m})]\eta  \; dx
\end{equation}
for every $\eta \in \mathbb{X}^{s}_{m,T}$.
Now, we consider $\xi\in \mathcal{C}^{\infty}([0,\infty))$ defined as follows
\begin{equation}\label{xidef}
\left\{
\begin{array}{cc}
\xi=1 &\mbox{ if } 0\leq y\leq 1 \\
0\leq \xi \leq 1 &\mbox{ if } 1\leq y\leq 2 \\ 
\xi=0 &\mbox{ if } y\geq 2 
\end{array}.
\right.
\end{equation}
We set $\xi_{R}(y)=\xi(\frac{y}{R})$ for $R>1$. Then choosing $\eta=\varphi \xi_{R}\in \mathbb{X}^{s}_{m,T}$ in (\ref{spqr}) and taking the limit as $m\rightarrow 0$ we have 
\begin{equation}
\iint_{\mathcal{S}_{T}} y^{1-2s}\nabla v \nabla (\varphi \xi_{R}) \; dxdy=\int_{\partial^{0} \mathcal{S}_{T}} f(x,v)\varphi \; dx.
\end{equation}
Taking the limit as $R\rightarrow \infty$ we deduce that $v$ verifies
$$
\iint_{\mathcal{S}_{T}} y^{1-2s} \nabla v \nabla \varphi \; dxdy-\int_{\partial^{0} \mathcal{S}_{T}} f(x,v)\varphi  \; dx=0 \quad \forall \varphi \in \mathbb{X}^{s}_{T}.
$$
Now we want to prove that $v\not \equiv 0$.
Let $\xi \in \mathcal{C}^{\infty}([0,\infty))$ as in (\ref{xidef}) and we note that $\xi v \in \mathbb{X}^{s}_{m,T}$. 

Then
\begin{align*}
0&= \langle \mathcal{J}'_{m}(v_{m}), \xi v \rangle \\
&= \iint_{{\mathcal{S}}_{T}} y^{1-2s}(\nabla v_{m} \nabla(\xi v) + m^{2} v_{m} \xi v) \, dxdy - m^{2s}\int_{\partial^{0} \mathcal{S}_{T}} v_{m} v \, dx \\
& - \int_{\partial^{0} \mathcal{S}_{T}} f(x,v_{m})v \,dx 
\end{align*}
and taking the limit as $m\rightarrow 0$ we get
\begin{equation}\label{6.8}
0=\iint_{\mathcal{S}_{T}} y^{1-2s} \nabla v \nabla(\xi v) \, dxdy - \int_{\partial^{0} \mathcal{S}_{T}} f(x,v)v \, dx.
\end{equation}
By using the facts (\ref{zero}), (\ref{alpham}), $\langle \mathcal{J}'_{m}(v_{m}), v_{m}\rangle =0$ and $F\geq 0$, we have
\begin{align}\label{mF}
2\lambda &\leq 2\mathcal{J}_{m}(v_{m}) + m^{2s} |v_{m}(\cdot,0)|_{L^{2}(0,T)^{N}}^{2} + 2\int_{\partial^{0} \mathcal{S}_{T}} F(x, v_{m}) \, dx \nonumber \\
&=\|v_{m}\|_{\mathbb{X}^{m}_{T}}^{2} =m^{2s} |v_{m}(\cdot,0)|_{L^{2}(0,T)^{N}}^{2} + \int_{\partial^{0} \mathcal{S}_{T}} f(x,v_{m}) v_{m} \, dx.
\end{align}
Taking the limit in $(\ref{mF})$ as $m\rightarrow 0$ we obtain 
\begin{equation}\label{6.9}
2\lambda \leq \int_{\partial^{0} \mathcal{S}_{T}} f(x, v)v \, dx.
\end{equation}
Hence, (\ref{6.8}) and (\ref{6.9}) give
\begin{equation*}
0<2\lambda \leq \int_{\partial^{0} \mathcal{S}_{T}} f(x, v)v \, dx = \iint_{\mathcal{S}_{T}} y^{1-2s}\nabla v \nabla(\xi v) \, dxdy.
\end{equation*}
that is $v$ is not a trivial solution to (\ref{Ros}). 

Finally, we show that $v\in \mathcal{C}^{0,\alpha}([0,T]^{N})$, for some $\alpha \in (0,1)$.
We start proving that $v(\cdot,0)\in L^{q}(0,T)^{N}$ for any $q<\infty$. 
We proceed as in the proof of Lemma \ref{lemmino} and we use the estimate (\ref{baroni}).   

Let $w_{m}=v_{m}v^{2\beta}_{m,K}$ where $v_{m,K}=\min\{|v_{m}|,K\}$, $K>1$ and $\beta\geq 0$. Then, replacing $vv_{K}^{2\beta}$ by $v_{m}v^{2\beta}_{m,K}$ in (\ref{S1}), we can see that
\begin{align}
||v_{m}v_{m,K}^{\beta}||_{\mathbb{X}^{m}_{T}}^{2}\leq c_{\beta} \int_{(0,T)^{N}} [m^{2s}v_{m}^{2}v_{m,K}^{2\beta} + f(x,v_{m})v_{m} v_{m,K}^{2\beta}] dx 
\end{align}
where $c_{\beta}=1+\frac{\beta}{2}\geq 1$.
Using Lemma \ref{lfF} with $\varepsilon=1$ we get
\begin{equation}
m^{2s}v_{m}^{2}v_{m,K}^{2\beta} + f(x,v_{m})v_{m} v_{m,K}^{2\beta}\leq (m^{2s}+2)v_{m}^{2}v_{m,K}^{2\beta}+(p+1)C_{1}|v_{m}|^{p-1}v^{2}v_{m,K}^{2\beta}.
\end{equation}
Since $v_{m}$ converges strongly in $L^{\frac{N(p-1)}{2s}}(0,T)^{N}$ (because of $\frac{N(p-1)}{2s}< 2^{\s}_{s}$), we can assume that, up to subsequences, there exists a function $z \in L^{\frac{N(p-1)}{2s}}(0,T)^{N}$ such that $|v_{m}(x,0)|\leq z(x)$ in $(0,T)^{N}$, for every $m<m_{0}$.

Therefore, there exist a constant $c=m^{2s}_{0}+2+(p+1)C_{1}$ and a function $h:=1+z^{p-1}\in L^{\frac{N}{2s}}(0,T)^{N}$, $h\geq 0$ and independent of $K$, $m$ and $\beta$ such that
\begin{equation}
m^{2s}v_{m}^{2}v_{m,K}^{2\beta} + f(x,v_{m})v_{m}v_{m,K}^{2\beta}\leq (c+h)v_{m}^{2}v_{m,K}^{2\beta} \mbox{ on } \partial^{0}\mathcal{S}_{T}.
\end{equation}
As a consequence
\begin{equation}
||v_{m}v_{m,K}^{\beta}||_{\mathbb{X}^{s}_{m,T}}^{2}\leq c_{\beta} \int_{(0,T)^{N}} (c+h)v_{m}^{2}v_{m,K}^{2\beta} dx.
\end{equation}
Taking the limit as $K\rightarrow \infty$ ($v_{m,K}$ is increasing with respect to $K$) we get
\begin{equation}\label{i177}
|||v_{m}|^{\beta+1}||_{\mathbb{X}^{s}_{m,T}}^{2}\leq cc_{\beta} \int_{(0,T)^{N}} |v_{m}|^{2(\beta +1)} + c_{\beta}\int_{(0,T)^{N}}  h|v_{m}|^{2(\beta +1)}dx.
\end{equation}
For any $M>0$, let $A_{1}=\{h\leq M\}$ and $A_{2}=\{h>M\}$.
Then
\begin{equation}\label{i277}
\int_{(0,T)^{N}}  h|v_{m}(\cdot,0)|^{2(\beta +1)}dx\leq M||v_{m}(\cdot,0)|^{\beta+1}|_{L^{2}(0,T)^{N}}^{2}+\varepsilon(M)||v_{m}(\cdot,0)|^{\beta+1}|_{L^{2^{\s}_{s}}(0,T)^{N}}^{2}
\end{equation}
where $\varepsilon(M)=\Bigl(\int_{A_{2}} h^{\frac{N}{2s}} dx \Bigr)^{\frac{2s}{N}}\rightarrow 0$ as $M\rightarrow \infty$.
Taking into account (\ref{i177}), (\ref{i277}), we have
\begin{equation}\label{regv77}
|||v_{m}|^{\beta+1}||_{\mathbb{X}^{s}_{m,T}}^{2}\leq c_{\beta}(c+M)||v_{m}(\cdot,0)|^{\beta+1}|_{L^{2}(0,T)^{N}}^{2}+c_{\beta}\varepsilon(M)||v_{m}(\cdot,0)|^{\beta+1}|_{L^{2^{\s}}(0,T)^{N}}^{2}.
\end{equation}
Now, by (\ref{benyi}), we know that for every $w\in \mathcal{C}^{\infty}_{T}(\R^{N})$ with mean zero, there exists $\mu_{0}:=C_{2^{\s}_{s}}>0$, such that
\begin{equation}\label{puffo}
|w|_{L^{2^{\s}_{s}}(0,T)^{N}}\leq \mu_{0} \Bigl(\sum_{|k|\neq 0} \omega^{2s}|k|^{2s} |b_{k}|^{2}\Bigr)^{1/2},
\end{equation}
where $b_{k}$ are the Fourier coefficients of $w$. 

Therefore, if $w\in \mathcal{C}^{\infty}_{T}(\R^{N})$ and $\overline{w}:=\frac{1}{T^{N}}\int_{(0,T)^{N}} w(x)dx$, by using H\"older inequality we can see that
\begin{align}
|w|_{L^{2^{\s}_{s}}(0,T)^{N}}&\leq |w-\overline{w}|_{L^{2^{\s}_{s}}(0,T)^{N}}+|\overline{w}|_{L^{2^{\s}_{s}}(0,T)^{N}} \nonumber \\
&\leq \mu_{0} \Bigl(\sum_{|k|\neq 0} \omega^{2s}|k|^{2s} |b_{k}|^{2}\Bigr)^{1/2}+|\overline{w}|_{L^{2^{\s}_{s}}(0,T)^{N}} \nonumber  \\
&\leq \mu_{0} \Bigl(\sum_{|k|\neq 0} \omega^{2s}|k|^{2s} |b_{k}|^{2}\Bigr)^{1/2}+\mu_{1} |w|^{2}_{L^{2}(0,T)^{N}} \nonumber \\
&\leq \mu_{0} |w|^{2}_{\mathbb{H}^{s}_{m,T}}+\mu_{1} |w|^{2}_{L^{2}(0,T)^{N}} \label{berti}
\end{align}
where $\mu_{1}=T^{\frac{N-2}{2}}>0$.

Taking into account  (\ref{regv77}), (\ref{berti}) and (\ref{eqYY}), we deduce that
\begin{align}
&||v_{m}(\cdot,0)|^{\beta+1}|^{2}_{L^{2^{\s}_{s}}(0,T)^{N}}-\mu_{1} ||v_{m}(\cdot,0)|^{\beta+1}|^{2}_{L^{2}(0,T)^{N}} \\
& \leq \mu_{0} ||v_{m}(\cdot,0)|^{\beta+1}|^{2}_{\mathbb{H}^{s}_{m,T}} \nonumber \\
&\leq \mu_{0}  |||v_{m}|^{\beta+1}||_{\mathbb{X}^{s}_{m,T}}^{2} \nonumber \\
& \leq \mu_{0} \Bigl[ c_{\beta}(c+M)||v_{m}(\cdot,0)|^{\beta+1}|_{L^{2}(0,T)^{N}}^{2} \nonumber \\
&+c_{\beta}\varepsilon(M)||v_{m}(\cdot,0)|^{\beta+1}|_{L^{2^{\s}_{s}}(0,T)^{N}}^{2} \Bigr]  \label{iterbeta}.
\end{align}

Choosing $M$ large so that $c_{\beta} \mu_{0} \varepsilon(M)<\frac{1}{2}$, by (\ref{iterbeta}) we obtain
\begin{equation}\label{iter77}
||v_{m}(\cdot,0)|^{\beta+1}|_{L^{2^{\s}_{s}}(0,T)^{N}}^{2} \leq 2[\mu_{0}c_{\beta}(c+M)+\mu_{1}] ||v_{m}(\cdot,0)|^{\beta+1}|^{2}_{L^{2}(0,T)^{N}}.
\end{equation}
Let us notice that, by (\ref{baroni}) and $\mathbb{H}^{s}_{T}\subset L^{2^{\s}_{s}}(0,T)^{N}$, we get
\begin{align}\label{bessi}
|v_{m}(\cdot,0)|_{L^{2^{\s}_{s}}(0,T)^{N}} \leq K'''(\delta),
\end{align}
for any $m<m_{0}$.
By applying (\ref{iter77}) with $\beta+1=\frac{N}{N-2s}$ (that is $\beta=\frac{2s}{N-2s}$)  and by using (\ref{bessi}) we have that
$$
||v_{m}|^{\frac{N}{N-2s}}|_{L^{2^{\s}_{s}}(0,T)^{N}}^{2} \leq 2[c_{\frac{2s}{N-2s}} \mu_{0} (c+M)+\mu_{1}] K'''(\delta)^{\frac{2N}{N-2s}},
$$
and taking the limit as $m \rightarrow 0$, we deduce $v(\cdot,0)\in L^{\frac{2N^{2}}{(N-2s)^{2}}}(0,T)^{N}$.

By using the formula (\ref{iter77}), we find, after $k$ iterations, that $v(\cdot,0)\in L^{\frac{2N^{k}}{(N-2s)^{k}}}(0,T)^{N}$ for all $k \in \mathbb{N}$. Then $v(\cdot,0)\in L^{q}(0,T)^{N}$ for all $q\in[2,\infty)$, and by invoking Proposition $3.5.$ in \cite{FallFelli}, we can conclude that $v\in \mathcal{C}^{0,\alpha}([0,T]^{N})$, for some $\alpha \in (0,1)$.

\end{document}